\documentclass[12pt]{article}
\usepackage{latexsym,amsmath,amssymb}
\usepackage{amsthm,enumerate}
\usepackage{graphicx}
\usepackage{xcolor}
\usepackage[colorlinks]{hyperref}
\usepackage[utf8]{inputenc}

\usepackage[T1]{fontenc}
\usepackage{lmodern}
\definecolor{mapblue}{RGB}{25,65,180}
\definecolor{mapred}{RGB}{170,25,30}
\hypersetup{colorlinks=true,linkcolor=red,citecolor=green,urlcolor=magenta}

\newfont{\bb}{msbm10 at 12pt}

\def\R{\mathbb{R}}

\def\H{\mathbb{H}}

\def\G{\Gamma}

\def\S{\Sigma}

\def\esf{\mathbb{S}}

\newcommand{\meta}[2]{\langle #1,#2 \rangle }

\newcommand{\ee}{\begin{equation*}}
\newcommand{\fe}{\end{equation*}}

\newcommand{\set}[1]{\left\{#1\right\}}

\newcommand{\bp}{\mathbf{p}}

\newtheorem{theorem}{Theorem}[section]
\newtheorem{lemma}[theorem]{Lemma}

\newtheorem{remark}[theorem]{Remark}

\newtheorem{definition}[theorem]{Definition}

\newtheorem{example}[theorem]{Example}

\begin{document}

\title{Frobenius conjugation and Jacobi functions\\
	for minimal immersions in $\mathbb S^3$\\
	and CMC~$1$ immersions in $\mathbb R^3$}
\author{Jos\'e M. Espinar \and Joaqu\'in P\'erez}
\maketitle
\begin{abstract}
We give a first-order Frobenius description of conjugate minimal
immersions in the round three-sphere and use it to define the conjugate
of an arbitrary Jacobi function. The construction is intrinsic at the
linearized level and does not require the Jacobi function to be
integrable through a normal deformation by minimal surfaces. We then describe how the boundary conditions are exchanged
under conjugation.  Along a great-circle boundary arc, a Dirichlet
Jacobi function has, after a suitable ambient gauge, a conjugate Jacobi function with
vanishing Neumann data.  Conversely, along a reflection curve, vanishing
Neumann data yield vanishing Dirichlet data for the conjugate modulo a
Jacobi--Killing function. We also construct conjugate Jacobi functions for conjugate CMC~$1$ immersions in $\R^3$ and exhibit cylindrical counterexamples to both pure boundary exchanges, even modulo ambient Killing fields.
\begin{center}
	\emph{Dedicated to the memory of William H. Meeks III}
\end{center}
\end{abstract}
\noindent
{\em Keywords:} Conjugate minimal and CMC immersions; Jacobi
functions; Lawson correspondence; associated families; Frobenius
systems; Dirichlet--Neumann correspondence.
\par\noindent
{\em Mathematics Subject Classification:} Primary: 53A10; Secondary: 49Q05; 53C42.

\section{Introduction}
The conjugate-surface construction couples pairs of simply connected minimal or constant-mean-curvature (CMC) immersions in space forms. We consider the following four classes of pairs $(f,f^*)$ of immersions: 
\begin{enumerate}
	\item[(a)] Conjugate minimal immersions in $\mathbb{R}^3$;
	\item[(b)] Cousin pairs $(f,f^*)$ where $f\colon \Sigma\to \mathbb{R}^3$ is a CMC 1-immersion and $f^*\colon \Sigma\to \mathbb{S}^3$ is its conjugate cousin minimal immersion; 
	\item[(c)] Conjugate minimal immersions in $\mathbb{S}^3$;
	\item[(d)] Conjugate CMC 1-immersions in $\mathbb{R}^3$.
\end{enumerate} 
The purpose of this paper is twofold: first, to formulate explicitly
the conjugation of immersions in cases (c) and (d) \emph{à la
	Frobenius}, i.e., in terms of a coupled system of first-order PDEs;
second, to linearize these systems and construct conjugate Jacobi functions without assuming integrability through families of minimal or CMC immersions. In case~(c), we prove the exchange between Dirichlet and Neumann data along a single boundary arc, modulo ambient Killing fields. For case~(d), Theorem~\ref{thm:CMC-Jacobi-conjugation} gives the corresponding interior construction, while Example~\ref{ex:CMC-cylinder-boundary} shows that both pure boundary exchanges fail in general.

The first-order formulation of conjugate and cousin surface
correspondences has a substantial history.  Lawson's correspondence
between CMC surfaces in space forms was originally formulated in terms of the fundamental forms and their integrability equations~\cite{la3}.  A particular case of the Lawson correspondence is the so-called {\em conjugate cousin construction} between CMC $H=1$ surfaces in $\mathbb{R}^3$ and minimal surfaces in $\mathbb{S}^3$. A first-order, quaternionic version of the conjugate cousin construction was initiated by Karcher~\cite{ka5}, developed further by Gro{\ss}e-Brauckmann~\cite{gb1}, and used
systematically by Gro{\ss}e-Brauckmann, Kusner, and
Sullivan~\cite{gbks,gbks1} in the construction and
classification of triunduloids and coplanar $k$-unduloids in $\mathbb{R}^3$. In the classical setting of minimal surfaces in $\mathbb R^3$, conjugate Jacobi functions have been used in several contexts. In particular, Cos{\'i}n and Ros applied this construction to
bounded Jacobi functions on finite-total-curvature minimal surfaces
in their study of coplanar minimal $k$-noids
in $\mathbb R^3$~\cite{coros}. For CMC surfaces in $\mathbb{R}^3$, Korevaar, Kusner, and Ratzkin introduced a conjugate Jacobi field construction by linearizing the conjugate cousin system; in particular, their construction exchanges Neumann and Dirichlet boundary behavior and yields nondegeneracy results for triunduloids~\cite{kkr1}.  This method was subsequently developed by Gro{\ss}e-Brauckmann, Korevaar, Kusner, Ratzkin, and Sullivan to prove that all coplanar $k$-unduloids are nondegenerate and to analyze the differential of their classifying map~\cite{GKKRS2009}. These nondegeneracy results are closely related to the analytic structure of CMC moduli spaces developed by Kusner, Mazzeo, and Pollack~\cite{kmp1}.

As mentioned in the last paragraph, the formulation {\em à la Frobenius} is well-known for cases (a) and (b); interestingly, in cases (c) and (d) a new ingredient appears in the first order system, namely a frame map $R\colon \Sigma\to SO(3)$ that relates the frames of $f,f^*$ and that in cases (a) and (b) reduces to a constant map, which can be normalized to be the identity $I\in SO(3)$. Another feature that requires some care is that since the conjugate immersion is determined only up to an ambient isometry, the conjugate Jacobi function is defined only modulo Jacobi--Killing functions; this feature produces a necessary adjustment when deducing the exchange between the Dirichlet and Neumann conditions for conjugate Jacobi functions in case (c).

{\em Organization of the paper.} 
After recalling the quaternionic model in
Section~\ref{SectPreliminaries} and the conjugate correspondence
\emph{à la Frobenius} for cases (a) and (b) in
Section~\ref{secconj}, we derive Frobenius systems for cases (d)
and (c), respectively, in Theorems~\ref{thm8.3}
and~\ref{thm8.4}.
Starting in Section~\ref{secJacconj} we focus on case (c): first we linearize the Frobenius system; a direct curvature computation shows that its integrability
condition is precisely the Jacobi equation. Section~\ref{sec:integrable-gauge} applies
the linearized system of case~(c) to boundary arcs and proves the Dirichlet--Neumann
and Neumann--Dirichlet rules, with the necessary gauge freedom stated
explicitly. Section~\ref{sec:associated-family} shows that the Frobenius system and its linearization extend naturally from the conjugate immersion/Jacobi function to the whole associated family. In particular, every Jacobi function on $f$ determines, for each associated immersion $f_\theta$, a Jacobi function $u_\theta$, uniquely defined modulo Jacobi--Killing functions. The conjugate construction studied in detail in Sections~\ref{secJacconj} and~\ref{sec:integrable-gauge} is recovered at $\theta=\pi/2$. The exchange of Dirichlet and Neumann data, however, is a special feature of the conjugate pair and does not extend to a general associated immersion. Section~\ref{sec:CMC-Jacobi-conjugation} then returns to case~(d): it constructs the CMC~$1$ conjugate of an arbitrary Jacobi function, uniquely modulo Euclidean Jacobi--Killing functions, and derives a boundary identity that retains the tangential and conormal derivatives while exhibiting extra terms. An explicit cylindrical example proves the failure of both pure boundary exchanges at the level of the immersions and of the Jacobi functions.

This work belongs to the circle of ideas surrounding minimal and
CMC surfaces, Jacobi fields, and deformation
theory to which William H. Meeks III made fundamental contributions.
It is a privilege to dedicate this paper to his memory.
\par\vspace{.2cm}
{\em Use of AI.} During the preparation of this paper, the authors used Anthropic's Claude and OpenAI's ChatGPT Sol as a research-assistance tool in developing the reformulation that removes the integrability hypothesis from the proof of Theorem~\ref{propos8.9} and from the Dirichlet--Neumann correspondence, as well as in extending the main
results from conjugate pairs to the full associated family. All
AI-assisted suggestions were independently checked, revised, and
integrated by the authors, who take full responsibility for every
mathematical statement, argument, and conclusion in the paper.

\section{Preliminaries}\label{SectPreliminaries}

We will start by settling the notation and basic properties used throughout the paper. 

\subsection{Quaternions}

We will use the standard identification of $\R^4$ with the quaternions:
\[
\H =\R {\bf e}\oplus \mbox{Span}\{ {\bf i},{\bf j},{\bf k}\}=
	\R {\bf e}\oplus \Im(\H),
\]
where ${\bf e}=(1,0,0,0), \ {\bf i}=(0,1,0,0), \ {\bf j}=(0,0,1,0), \ {\bf k}=(0,0,0,1)$.
We will identify $\Im(\H)$ with $\R^3$ in the usual manner, i.e., ${\bf i},{\bf j}, {\bf k}$ identify with the canonical basis of $\R^3$ by removing the real part of quaternions. For ${\bf p},{\bf q}\in \H$, we have as usual
\begin{enumerate}[(Q1)]
\item Decomposition  into real and imaginary parts: ${\bf p}=\Re({\bf p}){\bf e}+\Im({\bf p})$. 
\item Product of quaternions: 
$${\bf p}\cdot {\bf q}=\left( \Re({\bf p})\Re({\bf q})-\langle \Im({\bf p}),\Im({\bf q}) \rangle \right) {\bf e}
+\left[ \Re({\bf p})\Im({\bf q})+\Re({\bf q})\Im({\bf p})+\Im({\bf p})\times \Im({\bf q})\right] .$$
(hence $\langle {\bf p},{\bf q}\rangle =-\Re({\bf p}\cdot {\bf q})$ $\forall {\bf p},{\bf q}\in \Im(\H)$)
\item Conjugate of a quaternion: $\overline{{\bf p}}=\Re({\bf p}){\bf e}-\Im ({\bf p})$, $\forall {\bf p}\in \H$. 
\item Hermitian product: $({\bf p},{\bf q})_{\H}=\overline{\bf p}\cdot {\bf q}$.
\item Norm: $\| {\bf p} \|=\sqrt{\overline{\bf p}\cdot {\bf p}}$ (hence $\| {\bf p}\cdot {\bf q}\|=\| {\bf p}\| \| {\bf q}\|$).
\item ${\bf a}\cdot {\bf b}-{\bf b}\cdot {\bf a}=2\, {\bf a}\times {\bf b}$
for all ${\bf a},{\bf b}\in \Im(\H)$.
\item $\langle {\bf p}\cdot {\bf b},{\bf q}\rangle 
+\langle {\bf p},{\bf q}\cdot {\bf b}\rangle=0$, for all ${\bf p},{\bf q}\in \H$,
${\bf b}\in \Im(\H)$.
\end{enumerate}

\subsection{\texorpdfstring{The standard model of $\esf^3$ in $\H$}{The standard model of S3 in H}}\label{sec2.1a}

We will mostly use the standard model of the three-dimensional sphere $\esf^3$ inside $\H$ as the unitary quaternions, $\esf ^3 = \set{ \bp = (p_1 ,p_2 ,p_3 ,p_4) \in \H \, | \, \, \meta{\bp}{\bp} =1 }$,
endowed with its round metric of constant sectional curvature $1$ given by the restriction of the standard Euclidean product $\langle \cdot ,\cdot \rangle$ of $\R^4\equiv \H$ to $\esf^3$. Therefore, $(\esf^3,\cdot )$ is a Lie group with identity element ${\bf e}$ and $T_{\bf e}\esf^3=\Im(\H)$ identifies with the Lie algebra of $\esf^3$.

The identities
\[
\langle{\bf p}\cdot {\bf q},{\bf p}\cdot {\bf r}\rangle =\| {\bf p}\|^2\langle {\bf q},{\bf r}\rangle=\langle{\bf q}\cdot {\bf p},{\bf r}\cdot {\bf p}\rangle 
\qquad \forall {\bf p},{\bf q},{\bf r}\in \H,
\]
tell us that given ${\bf p}\in \esf^3$, the left and right translations
\[
\begin{array}{cl}
l_{\bf p},r_{\bf p}\colon \H\to \H, & l_{\bf p}({\bf q})={\bf p}\cdot {\bf q}, \quad r_{\bf p}({\bf q})={\bf q}\cdot {\bf p},
\end{array}
\]
are isometries of $(\H,\langle \cdot ,\cdot \rangle)$.

Since the left multiplication by elements in $\esf^3$ is an isometry of the inner product in $\R^4$,
given ${\bf p}\in \esf^3$ we can induce a cross product $\times_{\bf p}$ in 
$T_{\bf p}\esf^3=\langle {\bf p}\rangle ^{\perp}$ 
by left translating by ${\bf p}$ the usual cross product 
$\times=\times_{\bf e}$ in $\R^3=
\Im(\H)$, i.e.,
\[
({\bf p}^{-1}\cdot {\bf w}_1)\times ({\bf p}^{-1}\cdot {\bf w}_2)=
{\bf p}^{-1}\cdot({\bf w}_1\times_{\bf p}{\bf w}_2),
\qquad \forall {\bf w}_1,{\bf w}_2\in T_{\bf p}\esf^3.
\]
The same holds with the volume element $\det_{\bf p}$ in
$T_{\bf p}\esf^3$, given by 
\[
\textstyle{\det_{\bf p}}({\bf w}_1,{\bf w_2},{\bf w_3})
=\det_{\Im(\H)}({\bf p}^{-1}\cdot {\bf w}_1,{\bf p}^{-1}\cdot {\bf w}_2,{\bf p}^{-1}\cdot {\bf w}_3)=\det({\bf p},{\bf w}_1,{\bf w}_2,{\bf w}_3),
\]
for all ${\bf w}_1,{\bf w}_2,{\bf w}_3\in T_{\bf p}\esf^3$ (the last 
determinant is the usual one in $\R^4$).

\section{\texorpdfstring{The conjugation of minimal surfaces in $\R^3$ and the cousin correspondence}{The conjugation of minimal surfaces in R3 and the cousin correspondence}}
\label{secconj}
Throughout this section, $(\Sigma,ds^2)$ will denote a simply-connected Riemannian surface with area element $da$. It is a classical fact that an isometric minimal immersion $f\colon (\Sigma,ds^2) \to \R^3$ and its conjugate minimal immersion $f^*
\colon \Sigma \to \R^3$ are related (up to rigid motions)
by the first order PDE
\begin{equation}\label{123}
df^*=df\circ J,
\end{equation}
where $J$ is the $ds^2$-rotation in the tangent bundle $T\Sigma$
by angle $\pi/2$. The integrability condition of~\eqref{123} 
in the unknown $f^*$ is that the $\R^3$-valued one-form $(p,v)\in T\Sigma \mapsto df_p(Jv)$ is closed (hence exact). Regardless of whether $f$
is minimal or not, we have
\begin{equation}\label{124}
d(df\circ J)=-2H\, N\, da,
\end{equation}
where $H\, N$ is the mean curvature vector of $f$ and $N$ is a unit normal vector. Thus, given $(p_0,x_0)
\in \Sigma\times \R^3$, there exists
a unique smooth map $f^*\colon \Sigma \to \R^3$ satisfying \eqref{123}
with $f^*(p_0)=x_0$ if and only if $f$ is minimal. \eqref{123} implies that $f^*$ is an 
immersion, and the pullback metric $f^*\langle ,\rangle $ is $ds^2$ 
since $J$ is an isometry of $ds^2$. $f^*$ turns out to be minimal as well,
and is called the conjugate minimal immersion to $f$. $f$ and $f^*$ are 
uniquely determined up to translations in $\R^3$.

Gro{\ss}e-Brauckmann, Kusner and Sullivan~\cite{gbks}, see also 
Gro{\ss}e-Brauckmann and
Polthier~\cite{gbp1}, gave a similar description  {\em à la Frobenius}
for the minimal cousin in $\mathbb{S}^3$ of a CMC $H=1$ immersion in $\mathbb{R}^3$, using quaternionic geometry.
The isometric immersions are
\[
 f\colon (\Sigma,ds^2)\to \mathbb S^3,
 \qquad f_1\colon(\Sigma,ds^2)\to \mathbb R^3.
\]
Seeing $f_1,f$ as taking values respectively in the imaginary quaternions $\R^3\equiv \Im(\H)$
and in the unitary quaternions $\esf^3\subset \H$, they are related by the first order PDE 
\begin{equation}\label{eq13}
df=f\cdot df_1\circ J,
\end{equation}
where $\cdot $ denotes quaternionic product (observe the analogy between~\eqref{123} and~\eqref{eq13}).
Since $f_1$ is valued in $\Im(\H)$, its differential $df_1$ is a 1-form along $f_1$ with
values in $\Im(\H)=T_{\bf e}\esf^3$, and thus, the right-hand side 
of~\eqref{eq13} is a 1-form taking values in $\H$. As 
$\Sigma$ is simply connected, the existence of a smooth solution
$f\colon\S \to \H$ of~\eqref{eq13} 
is again equivalent to the fact that the right-hand side
of~\eqref{eq13} is closed. Without assuming that $f_1$ has constant
mean curvature, a direct computation gives
\begin{equation*}
d(f\cdot df_1\circ J)=2(1-H_1)f\cdot N_1\, da,
\end{equation*}
where $H_1\, N_1$ is the mean curvature vector of $f_1$. Therefore,
the integrability condition of~\eqref{eq13} is that
$f_1$ has constant mean curvature $H_1=1$.

By the Frobenius theorem, every choice of an initial condition $f(p_0)
={\bf x}_0\in \esf^3$ determines uniquely a solution 
$f\colon\S \to \H$ of~\eqref{eq13} with this initial condition.

For every $X\in T\Sigma$, equation~\eqref{eq13} and the
quaternionic product identities give 
\[
\langle f,df(X)\rangle
=
\bigl\langle f,f\cdot df_1(JX)\bigr\rangle
=
\|f\|^2
\bigl\langle {\bf e},df_1(JX)\bigr\rangle
=
0.
\]
Hence $d(\|f\|^2)=0$, so $\|f\|$ is constant on $\Sigma$.
Since $f(p_0)\in\mathbb S^3$, it follows that
\(f(\Sigma)\subset\mathbb S^3\).

Consequently, left multiplication by \(f(p)\) is an isometry for
every \(p\in\Sigma\). Since \(J\) is an isometry and \(f_1\) is an
isometric immersion, equation~\eqref{eq13} implies that \(f\) is an
isometric immersion as well.
A straightforward computation gives that $f$ is minimal.
Reciprocally, every isometric minimal immersion $f\colon(\S,ds^2) \to \esf^3$
gives rise to an isometric CMC $H=1$ immersion into
$\R^3=\Im(\H)$ via~\eqref{eq13}. The immersions $f_1$ and $f$ are uniquely determined up to translations in $\R^3$ and left translations in $\esf^3$, respectively.

The main geometric objects associated to $f,f_1$ are related as follows,
where $\{e_1,e_2=Je_1\}$ is a positively oriented principal frame of $f$, with principal curvatures $\kappa_1,\kappa_2$. We use the convention $A:=-dN$, identifying tangent vectors with their images under the immersion; intrinsically, $df\circ A=-dN$. Thus $\sigma(X,Y)=ds^2(AX,Y)$ and $H=\tfrac12\mbox{trace}(A)$:
\begin{eqnarray}
\mbox{Unit normal vectors:}&N=f\cdot N_1.& \label{eq12}
\\
\mbox{Second fundamental forms:}&
\sigma_1(X,Y)=ds^2(X,Y)-\sigma(X,JY).&\nonumber
\\
\mbox{Shape operators:}&
A_1=I_2+J\circ A=I_2-A\circ J.&\nonumber
\\
\mbox{Principal directions:}&e_{1,\pm}=\frac{1}{\sqrt{2}}(e_1\pm e_2).&\nonumber 
\\
\mbox{Principal curvatures:}&
\kappa_{1,+}=1+\kappa_1,
\quad
\kappa_{1,-}=1+\kappa_2. &\nonumber
\\
\mbox{Norm of second fundamental forms:}&|A_1|^2=|A|^2+2.\nonumber
\\
\mbox{Jacobi operators:}&\mathcal{L}_{f_1}:=\Delta+|A_1|^2=\Delta+|A|^2+2:=
\mathcal{L}_{f}.\nonumber
\end{eqnarray}
To check the signs in this table, set $\alpha=f^{-1}\cdot df$.
Equation~\eqref{eq13} gives $df_1=-\alpha\circ J$, and the chosen normal
satisfies $N_1=f^{-1}\cdot N$. Since
$\alpha(X)\cdot N_1=-\alpha(JX)$, the Weingarten equation yields
\[
 dN_1=-\alpha\circ A-\alpha\cdot N_1
      =\alpha\circ(J-A).
\]
On the other hand, $dN_1=-df_1\circ A_1=\alpha\circ JA_1$.
The injectivity of $\alpha$ gives $JA_1=J-A$, and hence
$A_1=I_2+JA=I_2-AJ$, because $AJ=-JA$ for a minimal surface.
This also determines the second fundamental form and the stated
ordering of the two principal curvatures.

\begin{remark}\label{rmk3.1}
{\rm
\begin{enumerate}
\item \label{rmk3.1.1} Equations~\eqref{123} and~\eqref{eq13} are two instances of the classic
Lawson correspondence, and both of them can be encoded by a single 
application of the Frobenius theorem, which deals with first order PDEs; 
this is in contrast with the fact that the Gauss and Codazzi equations (which govern the
Lawson correspondence), are PDEs of second order. The reason for this
apparent simplification is that the unit
normal vector of $f^*$ in~\eqref{123} and of $f_1$ in~\eqref{eq13} are
directly determined by that of $f$, by the well-known equation $N^*=N$
in the first case and by~\eqref{eq12} in the second one. 

\item Equation~\eqref{eq12} can be rewritten as $f^{-1}\cdot N=N_1$, i.e., the left invariant
Gauss map of $f$ coincides with the classical Gauss map of $f_1$. 
\end{enumerate}
}
\end{remark}

\section{\texorpdfstring{Conjugation of CMC 1 surfaces in $\R^3$}{Conjugation of CMC 1 surfaces in R3}}
\label{sec8.1}
Next, we will explicitly describe 
how to obtain the first-order Frobenius formulation of cases (c), (d) of the Lawson correspondence stated in the introduction. 
The construction requires two applications of the
Frobenius theorem: one at the level of the immersions and another
at the level of their tangent frames, or equivalently of their unit
normal fields. A closely related flat $\mathrm{SO}(3)$-connection
was introduced by Korevaar, Kusner, and Ratzkin
in~\cite[Proposition~13 and Appendix~A]{kkr1}; their connection
admits the interpretation of a sphere spinning with angular speed
$2$ along the surface. To the best of our knowledge, however, the
two-step Frobenius reconstruction of the conjugate CMC immersion
developed below has not previously been written explicitly.

We start with an isometric immersion $f\colon (\Sigma,ds^2)\to \R^3$ with unit normal vector $N$ and area element $da$. The mean curvature $H$ of $f$ with respect to $N$ satisfies~\eqref{124}
(we do not assume for the moment that $H$ is constant). Consider the
following first order PDE in the unknowns $f^*\colon \S\to \R^3$, $R\colon \S \to SO(3)$ (both smooth maps) and $c\in \R$:
\begin{equation}\label{eq134}
R\, df^*=df\circ (J+c\, I),
\end{equation}
where $I$ is the identity map in $T\Sigma$. First suppose that there exist $f^*,R,c$ such that~\eqref{eq134} holds.  Since $I,J$ are linearly independent, $f^*$ is an immersion. A direct computation gives that the pullback metric by $f^*$ of the inner product $\langle ,\rangle $
in $\R^3$ is $(1+c^2)\langle, \rangle$, hence the only way $f^*
\colon (\S,ds^2)\to\R^3$
can be an isometric immersion is that $c=0$. We will assume this
in the sequel.

As in the previous cases, in order to analyze the 
existence of $f^*$
solving~\eqref{eq134} we must investigate
when the $\R^3$-valued one-form 
\[
\theta_1:=R^{\top}df\circ J
\]
is closed. Let $\{X,Y\}$ be a positively oriented local orthonormal
basis of $T\Sigma$; thus $\{df(X),df(Y),N\}$ is a positively oriented
orthonormal basis of $\R^3$.
\begin{lemma}
\label{lem8.1}
\begin{equation}
d\theta_1 (X,Y)=
R^{\top}\left[ 		dR(X)\, R^{\top}\, df(X)+dR(Y)\, R^{\top}\, df(Y)-2H\, N
\right].	\label{eq135}
\end{equation}
\end{lemma}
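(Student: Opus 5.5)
We compute $d\theta_1$ by the Leibniz rule, writing $\theta_1=R^{\top}\,\omega$ with $\omega:=df\circ J$, an $\R^3$-valued one-form on $\Sigma$, and $R^{\top}$ a matrix-valued function. This gives
\[
d\theta_1=dR^{\top}\wedge\omega+R^{\top}\,d\omega .
\]
The second term is handled directly by \eqref{124}: $d\omega=-2H\,N\,da$. Since $\{X,Y\}$ is positively oriented and orthonormal, $da(X,Y)=1$, so $R^{\top}d\omega(X,Y)=-2H\,R^{\top}N$. This is the last term of \eqref{eq135}.

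For the first term we evaluate the wedge on $(X,Y)$:
\[
(dR^{\top}\wedge\omega)(X,Y)=dR^{\top}(X)\,\omega(Y)-dR^{\top}(Y)\,\omega(X).
\]
Positive orientation gives $JX=Y$ and $JY=-X$. Hence $\omega(Y)=df(JY)=-df(X)$ and $\omega(X)=df(Y)$, and the expression becomes
\[
-\bigl[dR^{\top}(X)\,df(X)+dR^{\top}(Y)\,df(Y)\bigr].
\]
Next we rewrite $dR^{\top}$ in terms of $dR$. Differentiating $R^{\top}R=I$ gives $dR^{\top}=-R^{\top}\,dR\,R^{\top}$. Substituting,
\[
(dR^{\top}\wedge\omega)(X,Y)=R^{\top}\bigl[dR(X)\,R^{\top}df(X)+dR(Y)\,R^{\top}df(Y)\bigr].
\]
Factoring $R^{\top}$ out of both terms yields exactly \eqref{eq135}.

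The only delicate points are sign conventions: the convention $(\alpha\wedge\beta)(X,Y)=\alpha(X)\beta(Y)-\alpha(Y)\beta(X)$ for matrix-valued $\alpha$ acting on vector-valued $\beta$, the orientation relations $JX=Y$, $JY=-X$, and consistency with the sign in \eqref{124}. I expect the main obstacle to be checking that these match the paper's normalization of \eqref{124}, since that normalization fixes the sign of the $-2H\,N$ term. If needed, this can be confirmed by evaluating the case $R\equiv I$, where \eqref{eq135} must reduce to \eqref{124}.
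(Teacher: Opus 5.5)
Your proof is correct and is essentially the paper's computation. The paper expands $d\theta_1(X,Y)=X(\theta_1(Y))-Y(\theta_1(X))-\theta_1([X,Y])$ and uses $dR\,R^{\top}+R\,dR^{\top}=0$ to produce the $dR$ terms, which is what your Leibniz step does. The one difference is that you cite \eqref{124} for the term $R^{\top}d(df\circ J)(X,Y)=-2H\,R^{\top}N$, whereas the paper re-derives it from the Gauss formula $X(df(X))=df(\nabla_XX)+\sigma(X,X)N$ and the identity $\nabla_XX+\nabla_YY+J[X,Y]=0$; your signs and the normalization $da(X,Y)=1$ agree with \eqref{124}.
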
 
\begin{proof}
A direct computation using that $d\theta_1(X,Y)=X(\theta_1(Y))-Y(\theta_1(X))-\theta_1([X,Y])$, the 
product rule and that $(dR)\, R^{\top}+R\, d(R^{\top})=d(R\, R^{\top})=dI_3=0$, gives	
\begin{equation}
	\label{eq136}
\begin{aligned}
d\theta_1(X,Y)=R^{\top}\bigl[&dR(X)R^{\top}df(X)
 +dR(Y)R^{\top}df(Y)\\
&-X(df(X))-Y(df(Y))-df(J[X,Y])\bigr].
\end{aligned}
\end{equation}
Let $\nabla ,\sigma$ be
the Levi-Civita connection of $(\S,ds^2)$ and the second fundamental form of $f$, respectively. Since $X(df(X))=df(\nabla _XX)+\sigma(X,X)N$ (analogously for $Y$) and 
\begin{equation}\label{eq137'}
\nabla_XX+\nabla _YY+J[X,Y]=0,
\end{equation}
the right-hand sides of~\eqref{eq135} and~\eqref{eq136} coincide.
\end{proof}

By Lemma~\ref{lem8.1},
we are interested in the condition in $f$ in order to solve for 
$R\colon \S\to SO(3)$ the first order PDE
\begin{equation}
\label{eq137}
dR(X)\, R^{\top}\, df(X)+dR(Y)\, R^{\top}\, df(Y)=2H\, N.
\end{equation}
Consider the vector space $\mathcal{M}_3(\R)$ of real $3\times 3$ matrices, the linear subspace $\mathfrak{so}(3)\subset \mathcal{M}_3(\R)$ of skew-symmetric matrices ($\mathfrak{so}(3)$ is the Lie algebra of $SO(3)$), and the linear isomorphism $\phi\colon \R^3\to \mathfrak{so}(3)$ given by
\[
\phi(w)=\left( \begin{array}{rrr}
	0 & -c & b
	\\
	c & 0 & -a
	\\
	-b & a & 0
\end{array}\right),
\]
where $w=(a,b,c)$. A direct computation gives that for all $w,w'\in \R^3$,
\begin{eqnarray}
\phi(w)\, w'&=&w\times w', \label{eq138'}
\\
\phi(w)\, \phi(w')-\phi(w')\, \phi(w)&=&\phi (w\times w').  \label{eq138''}
\end{eqnarray}

\begin{lemma}\label{lem8.2}
Let $f\colon (\Sigma,ds^2)\to \R^3$  be an isometric immersion with unit normal vector $N$ and mean curvature $H$. Then, the integrability condition for the first order PDE 
in the unknown $R\colon \S \to SO(3)$
\begin{equation}\label{eq139}
dR=-[\phi \circ df\circ (I+J)]\, R
\end{equation}
is that $f$ has CMC $H=1$.
\end{lemma}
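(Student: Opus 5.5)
The plan is to write \eqref{eq139} as $dR=-\Omega R$, with the $\mathfrak{so}(3)$-valued one-form
\[
\Omega:=\phi\circ df\circ(I+J).
\]
By the Frobenius theorem on the simply connected $\Sigma$, solutions $R$ with arbitrary initial value $R(p_0)\in SO(3)$ exist if and only if the zero-curvature (Maurer--Cartan) condition $d\Omega-\Omega\wedge\Omega=0$ holds. The sign convention follows from $0=d(dR)=-d\Omega\,R+\Omega\wedge dR=-(d\Omega+\Omega\wedge\Omega)R$ for $dR=-\Omega R$, so the condition is really $d\Omega+\Omega\wedge\Omega=0$. Since $\Omega$ takes values in $\mathfrak{so}(3)$ and $R(p_0)\in SO(3)$, the solution automatically stays in $SO(3)$: $d(RR^\top)=-\Omega RR^\top+RR^\top\Omega$ is a linear ODE solved by $RR^\top\equiv I_3$. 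So everything reduces to computing $(d\Omega+\Omega\wedge\Omega)(X,Y)$ in a positively oriented orthonormal frame $\{X,Y=JX\}$.

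For the exterior derivative, linearity of $\phi$ gives $d\Omega=\phi\circ d(df\circ(I+J))$. Now $d(df)=0$, and by \eqref{124}, $d(df\circ J)=-2H\,N\,da$. Hence
\[
d\Omega(X,Y)=-2H\,\phi(N).
\]

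For the quadratic term I would use
\[
(\Omega\wedge\Omega)(X,Y)=\Omega(X)\Omega(Y)-\Omega(Y)\Omega(X)=\phi\bigl(\omega(X)\times\omega(Y)\bigr),
\]
which is \eqref{eq138''} applied to $\omega:=df\circ(I+J)$. Here $\omega(X)=df(X)+df(Y)$ and $\omega(Y)=df(Y)-df(X)$. Using the orientation $df(X)\times df(Y)=N$, the cross product is
\[
(df(X)+df(Y))\times(df(Y)-df(X))=2N,
\]
so $(\Omega\wedge\Omega)(X,Y)=2\phi(N)$. Therefore
\[
(d\Omega+\Omega\wedge\Omega)(X,Y)=2(1-H)\,\phi(N),
\]
and since $\phi$ is an isomorphism and $N\neq 0$, the integrability condition is exactly $H\equiv1$.

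The main obstacle is bookkeeping of signs: the convention for the wedge of matrix-valued forms (including the factor in $d\Omega(X,Y)$ for the chosen normalization of $da$), the orientation sign in $df(X)\times df(Y)=N$, and the sign in \eqref{124}. I would double-check these by verifying consistency with Lemma~\ref{lem8.1}. Substituting $dR\,R^\top=-\Omega$ into \eqref{eq135}, and using \eqref{eq138'} with $Y=JX$, should reproduce the left side of \eqref{eq137} as $2N$. This matches \eqref{eq137} precisely when $H=1$, which confirms the normalization of $\Omega$.
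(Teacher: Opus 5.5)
Your proposal is correct and is essentially the paper's argument. The paper computes $d\theta_2$ for $\theta_2=[\phi\circ df\circ(I+J)]R$ directly by the Leibniz rule; this is exactly $(d\Omega+\Omega\wedge\Omega)R$, with the same $-2H\phi(N)$ and $2\phi(N)$ contributions giving $2(1-H)\phi(N)R$. The only differences are cosmetic: you factor out $R$ first and quote \eqref{124} for $d(df\circ J)$, whereas the paper re-derives that term inline via the Gauss formula \eqref{eq142} and \eqref{eq137'}.
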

\begin{proof}
We will show that~\eqref{eq139} satisfies that the formal mixed partial derivatives
are equal, i.e., the $\mathcal{M}_3(\R)$-valued 1-form 
\[
V\in T\S \mapsto 
\theta_2(V):=\left[ \phi\circ df\circ (I+J)\right](V)\, R
\]
is closed.
We continue using a positively oriented local orthonormal basis $\{ X,Y\}$ of $T\S$.
\begin{align}
d\theta_2(X,Y)
&=X(\theta_2(Y))-Y(\theta_2(X))-\theta_2([X,Y]) \notag\\
&=X\bigl(\phi(df(Y-X))R\bigr)-Y\bigl(\phi(df(X+Y))R\bigr) \notag\\
&\quad-\phi(df([X,Y]))R-\phi(df(J[X,Y]))R \notag\\
&\stackrel{(a)}{=}\phi\bigl[X(df(Y))-X(df(X))-Y(df(X))-Y(df(Y))\bigr]R \notag\\
&\quad+\phi(df(Y-X))dR(X)-\phi(df(X+Y))dR(Y) \notag\\
&\quad-\phi(df([X,Y]))R-\phi(df(J[X,Y]))R.\label{eq141}
\end{align}
where in $(a)$ we have used the product rule and
the fact that $\phi$ is linear. The Gauss equation for $f$ and~\eqref{eq137'} give
\begin{equation}\label{eq142}
X(df(Y))-X(df(X))-Y(df(X))-Y(df(Y))=df([X,Y])+df(J[X,Y])-2H\, N.
\end{equation}
Plugging~\eqref{eq142} into~\eqref{eq141} we get
\begin{align*}
d\theta_2(X,Y)
&=-2H\phi(N)R+\phi(df(Y-X))dR(X)-\phi(df(X+Y))dR(Y)\\
&\stackrel{\eqref{eq139}}{=}
 \bigl[-2H\phi(N)-\phi(df(Y-X))\phi(df(X+Y))\\
&\hspace{4.4cm}+\phi(df(X+Y))\phi(df(Y-X))\bigr]R\\
&=2\bigl[-H\phi(N)-\phi(df(Y))\phi(df(X))
 +\phi(df(X))\phi(df(Y))\bigr]R\\
&\stackrel{\eqref{eq138''}}{=}
2\bigl[-H\phi(N)+\phi(df(X)\times df(Y))\bigr]R
=2(1-H)\phi(N)R.
\end{align*}
Now the lemma is proved.
\end{proof}
\begin{remark}
{\rm 
The flatness criterion in Lemma~\ref{lem8.2} is closely related to, but distinct from, the spinning-sphere connection of
\cite[Proposition~13 and Appendix~A]{kkr1}.}
\end{remark}

From now on, we will assume that the isometric immersion $f\colon (\Sigma,ds^2)\to \R^3$ has CMC $H=1$ with respect to $N$.
By Lemma~\ref{lem8.2}, every choice $(p_0,R_0)\in \S\times SO(3)$ produces a unique 
smooth map $R\colon \S\to SO(3)$ solving~\eqref{eq139} and such that $R(p_0)=R_0$.
Thus, for every $p\in \S$ the differential
$dR_p$ maps $T_p\S$ into $T_{R(p)}SO(3)=R(p)\, \mathfrak{so}(3)$. Equation~\eqref{eq139} gives that
\[
dR_p(v)=-[\phi \circ df(v+Jv)]\, R(p)=R(p)\, M_v,
\]
where $M_v=-R(p)^{\top}\, [\phi \circ df(v+Jv)]\, R(p)\in \mathfrak{so}(3)$, or in short,
\begin{equation}\label{eq144}
M=-R^{\top}\, [\phi \circ df\circ (I+J)]\, R\colon \S\to \mbox{Hom}(T\S,\mathfrak{so}(3)).
\end{equation}

Now, we have
\begin{align*}
&dR(X)R^{\top}df(X)+dR(Y)R^{\top}df(Y)\\
&\quad\stackrel{\eqref{eq144}}{=}
-\phi(df(X+Y))df(X)-\phi(df(Y-X))df(Y)\\
&\quad\stackrel{\eqref{eq138'}}{=}
-df(X+Y)\times df(X)-df(Y-X)\times df(Y)=2N,
\end{align*}
that is, $R$ solves~\eqref{eq137}. Lemma~\ref{lem8.1} and the Frobenius Theorem imply that the first order PDE~\eqref{eq134} (with $c=0$) 
admits a unique solution $f^*\colon \S\to \R^3$
for any initial condition $f^*(p_0)=x^*_0\in \R^3$. As discussed in the 
first paragraph of this section, $f^*(\Sigma,ds^2)\to \R^3$ is an
isometric immersion. It remains to show that $f^*$ has CMC $H^*=1$.
Clearly, from~\eqref{eq134} one has that 
\begin{equation}\label{eq146}
N^*:=R^{\top}\, N
\end{equation}
is a unit normal vector for $f^*$. Denoting by $H^*$ the mean curvature
of $f^*$ with respect to $N^*$ and applying~\eqref{124} to $f^*$, we have
\begin{eqnarray*}
2H^*\, N^*&=&-[d(df^*\circ J)](X,Y) 
\stackrel{\eqref{eq134}}{=}[d\left( R^{\top}\, df\right)](X,Y)
\\
&=&
X(R^{\top}\, df(Y))-Y(R^{\top}\, df(X))-R^{\top}\, df[X,Y]
\\
&=& -R^{\top}\, dR(X)\,  R^{\top}\, df(Y)+R^{\top}\, dR(Y)\,  R^{\top}\, df(X)
\\
& & + R^{\top}\, X(df(Y))- R^{\top}\, Y(df(X))
-R^{\top}\, df[X,Y]
\\
&=& -R^{\top}\, \left[ dR(X)\,  R^{\top}\, df(Y)- dR(Y)\,  R^{\top}\, df(X)\right]
\\
&\stackrel{\eqref{eq144}}{=}& 
R^{\top}\, \left[ \phi (df(X+Y))\, df(Y)- \phi (df(Y-X))\, df(X)\right]
\\
&\stackrel{\eqref{eq138'}}{=}& 
R^{\top}\, \left[ df(X+Y)\times df(Y)- df(Y-X)\times df(X)\right]=2R^{\top}\,N
\\
&\stackrel{\eqref{eq146}}{=}&2N^*,
\end{eqnarray*}
hence $H^*=1$. To identify the resulting immersion as the conjugate CMC immersion,
we also compute its shape operator. Since $df(V)\times N=-df(JV)$,
\[
dN^*=d(R^{\top}N)\stackrel{\eqref{eq139}}{=}R^{\top}\bigl[\phi(df\circ(I+J))N-df\circ A\bigr]
       =R^{\top}df\circ(I-J-A).
\]
Comparing with $dN^*=-df^*\circ A^*=-R^{\top}df\circ JA^*$ gives
\begin{equation}\label{eq:CMC-shape-conjugate}
 A^*=I-J(A-I).
\end{equation}
Thus the traceless shape operators satisfy $A^*-I=-J(A-I)$, which is
precisely the conjugation convention used here.
In summary, we have the following result.

\begin{theorem}\label{thm8.3}
Every pair of conjugate isometric immersions $f,f^*\colon (\Sigma,ds^2)\to \R^3$ with CMC $H=1$ defined on a simply connected Riemannian surface are related by the
system of first order PDEs
\begin{eqnarray}
R\, df^*&=&df\circ J, \label{eq147}
\\
dR&=&-[\phi \circ df\circ (I+J)]\, R. \label{eq148}
\end{eqnarray}
For a fixed immersion $f$, each choice of $R(p_0)=R_0\in SO(3)$
and $f^*(p_0)=x_0^*\in\mathbb R^3$ determines a unique solution.
Changing these choices changes $f^*$ by an orientation-preserving rigid
motion. Prescribing $R_0$ is equivalent to prescribing a full oriented
orthonormal frame for $f^*$ at $p_0$, rather than only its tangent plane.
\end{theorem}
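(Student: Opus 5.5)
The plan is to assemble Theorem~\ref{thm8.3} from Lemmas~\ref{lem8.1} and~\ref{lem8.2} and the computations immediately preceding the statement. We argue in two directions: existence and uniqueness of solutions with prescribed initial data, and the identification of any solution with the conjugate immersion, so that every conjugate pair satisfies the system.

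First, fix $f$ with $H=1$. By Lemma~\ref{lem8.2}, the linear system \eqref{eq148} is integrable. On the simply connected $\Sigma$, the Frobenius theorem gives, for each $R_0$, a unique solution $R$ with $R(p_0)=R_0$. This solution stays in $SO(3)$: $d(RR^\top)$ vanishes because $\phi$ takes values in skew-symmetric matrices, and $\det R$ is continuous and equal to $\pm1$, so it stays equal to $1$.

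Second, for this $R$ the computation right after \eqref{eq144} shows that \eqref{eq137} holds with $H=1$. Lemma~\ref{lem8.1} then gives $d\theta_1=0$, so \eqref{eq147} has a unique solution $f^*$ with $f^*(p_0)=x_0^*$. We then show that $f^*$ is a CMC $1$ isometric immersion whose shape operator satisfies \eqref{eq:CMC-shape-conjugate}. By the paper's convention, this identifies it as the conjugate.

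Conversely, we must show that any conjugate pair satisfies the system. Let $f^*$ be a conjugate of $f$, i.e. isometric, CMC $1$, with $A^*-I=-J(A-I)$. At $p_0$, the frames $\{df(JX),df(JY),N\}$ and $\{df^*(X),df^*(Y),N^*\}$ are both positively oriented and orthonormal. Hence there is a unique $R_0\in SO(3)$ with $R_0\,df^*=df\circ J$ at $p_0$ and $R_0N^*=N$; this is also why prescribing $R_0$ amounts to prescribing a full oriented frame of $f^*$ at $p_0$. Let $(\tilde R,\tilde f^*)$ be the solution of the system with data $(R_0,f^*(p_0))$. Then $\tilde f^*$ and $f^*$ are both CMC $1$ isometric immersions with the same first fundamental form and the same shape operator $I-J(A-I)$, relative to $\tilde N^*=\tilde R^\top N$ and $N^*$ respectively. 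Their frames also agree at $p_0$. The fundamental theorem of surfaces, i.e. uniqueness of solutions of the Gauss--Weingarten frame ODE, gives $\tilde f^*=f^*$, so the pair satisfies the system with $R=\tilde R$.

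Finally, we show that changing the initial data acts by a rigid motion. Replacing $R_0$ by $R_0Q$ with $Q\in SO(3)$ replaces the solution $R$ by $RQ$, since \eqref{eq148} is invariant under right multiplication. Then \eqref{eq147} gives $d(\text{new }f^*)=Q^\top df^*$, so the new $f^*$ equals $Q^\top f^*+b$. Changing $x_0^*$ only adds a translation.

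The main obstacle is this converse identification. Its key point is that the shape operator formula \eqref{eq:CMC-shape-conjugate} matches the conjugation convention, so that congruence theory applies. It is essential to use the full frame at $p_0$ and not just the tangent plane, since otherwise a residual rotation about $N^*$ remains undetermined.
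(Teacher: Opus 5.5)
Your proposal follows the paper's own route: Lemma~\ref{lem8.2} for $R$, the identity \eqref{eq137} with Lemma~\ref{lem8.1} for $f^*$, then $H^*=1$ and \eqref{eq:CMC-shape-conjugate} to identify the conjugate; your Bonnet-uniqueness argument for the converse and the computation $R\mapsto RQ$, $f^*\mapsto Q^{\top}f^*+b$ spell out what the paper leaves implicit. One small slip: for $dR=\Theta R$ with $\Theta=-\phi\circ df\circ(I+J)$ skew-symmetric, the constant quantity is $R^{\top}R$, since $d(R^{\top}R)=R^{\top}(\Theta^{\top}+\Theta)R=0$, whereas $d(RR^{\top})=\Theta RR^{\top}-RR^{\top}\Theta$ does not vanish a priori.
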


\section{\texorpdfstring{Conjugation of minimal surfaces in $\esf^3$}{Conjugation of minimal surfaces in S3}}
\label{secconjminS3}

By concatenating the cousin correspondence and the conjugation of CMC 1 surfaces developed in Section~\ref{sec8.1}, we can describe the Frobenius scheme for the conjugation of minimal surfaces in $\esf^3$:
\begin{theorem}\label{thm8.4}
Every pair of conjugate isometric minimal immersions $f,f^*\colon (\Sigma,ds^2)\to \esf^3$ defined on a simply connected Riemannian surface are related by the
system of first order PDEs
\begin{eqnarray}
(f^*)^{-1}\cdot df^*&=&R^{\top}(f^{-1}\cdot df)\circ J. \label{eq150}
\\
dR&=&[\phi \circ (f^{-1}\cdot df)\circ (J-I)]\, R. \label{eq151}
\end{eqnarray}
Moreover, the unit normal vectors $N,N^*$ of $f,f^*$ satisfy
\begin{equation}
(f^*)^{-1}\cdot N^*=R^{\top}(f^{-1}\cdot N). \label{eq152}
\end{equation}
\end{theorem}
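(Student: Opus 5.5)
The plan is to obtain the system \eqref{eq150}--\eqref{eq151} by going through $\R^3$: first take the cousin of $f$, then conjugate that CMC~$1$ immersion with Theorem~\ref{thm8.3}, then take the cousin back to $\esf^3$. Finally, compare shape operators to see that the resulting $f^*$ is the conjugate of $f$.

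\textbf{Cousin of $f$.} Start with a minimal isometric immersion $f\colon(\Sigma,ds^2)\to\esf^3$. By Section~\ref{secconj}, \eqref{eq13} gives an isometric CMC~$1$ cousin $f_1\colon\Sigma\to\R^3=\Im(\H)$ with
\[
df_1=-(f^{-1}\cdot df)\circ J, \qquad N_1=f^{-1}\cdot N .
\]
Here we use $J^2=-I$ and that left translation by $f$ is invertible.

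\textbf{Conjugate in $\R^3$.} Apply Theorem~\ref{thm8.3} to $f_1$. This produces $R\colon\Sigma\to SO(3)$ and a conjugate CMC~$1$ immersion $f_1^*$ with
\[
R\,df_1^*=df_1\circ J, \qquad dR=-[\phi\circ df_1\circ(I+J)]R, \qquad N_1^*=R^{\top}N_1 ,
\]
the last identity being \eqref{eq146}. Substituting the formula for $df_1$:
\[
df_1\circ(I+J)=-(f^{-1}\cdot df)\circ(J+J^2)=-(f^{-1}\cdot df)\circ(J-I).
\]
This turns the equation for $dR$ into \eqref{eq151}.

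\textbf{Cousin back in $\esf^3$.} Take the minimal cousin $f^*$ of $f_1^*$ via \eqref{eq13}, so $(f^*)^{-1}\cdot df^*=df_1^*\circ J$. Then
\[
df_1^*\circ J=R^{\top}df_1\circ J^2=-R^{\top}df_1=R^{\top}(f^{-1}\cdot df)\circ J ,
\]
which is \eqref{eq150}. For the normals, \eqref{eq12} applied to the pair $(f^*,f_1^*)$ gives
\[
(f^*)^{-1}\cdot N^*=N_1^*=R^{\top}N_1=R^{\top}(f^{-1}\cdot N),
\]
which is \eqref{eq152}. The Frobenius integrability conditions are already guaranteed at each step: Lemma~\ref{lem8.2} because $H_1=1$, and the closedness computations of Section~\ref{secconj}.

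\textbf{Identifying $f^*$ as the conjugate.} Compose the shape-operator rules:
\begin{itemize}
\item cousin: $A_1=I+JA$;
\item CMC conjugation \eqref{eq:CMC-shape-conjugate}: $A_1^*=I-J(A_1-I)=I-J^2A=I+A$;
\item cousin for $(f^*,f_1^*)$: $A_1^*=I+JA^*$.
\end{itemize}
Together these give $JA^*=A$, i.e.\ $A^*=-JA$. So $f^*$ has the same metric as $f$ and second fundamental form rotated by $\pi/2$, which is the definition of the conjugate. The sign corresponds to the orientation choice for $N^*$; the other convention $A^*=JA$ is obtained by reversing $N^*$.

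\textbf{Converse and main obstacle.} For the converse ("every conjugate pair is related by the system"), I would use uniqueness of the conjugate up to ambient isometries. By the Gauss and Codazzi equations in $\esf^3$ (the Lawson correspondence), $(ds^2,-JA)$ determines $f^*$ up to an isometry of $\esf^3$. Any given conjugate $\tilde f^*$ therefore differs from the constructed $f^*$ by an ambient isometry, and this can be absorbed by choosing the initial data $R(p_0)$, $f^*(p_0)$ and the left translation in the last cousin step.

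I expect this last point, together with the sign bookkeeping, to be the main obstacle. One must check that the available gauge (left translations of $f^*$ and the choice of $R_0\in SO(3)$) reaches every orientation-preserving isometry of $\esf^3$, i.e.\ every pair of left and right translations. The plan is to show that changing $R_0$ to $R_0Q$ rotates $f_1^*$ by $Q^{\top}$, which corresponds to conjugation $\bp\mapsto \mathbf{q}^{-1}\bp\,\mathbf{q}$ on the cousin $f^*$. Combined with the left-translation freedom, this yields the whole group $\esf^3\times\esf^3$ acting by $\bp\mapsto \mathbf{a}\bp\mathbf{b}$.
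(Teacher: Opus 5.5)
Your proposal is correct and is the paper's proof. The paper also concatenates the cousin $f\mapsto f_1$, the CMC conjugation of Theorem~\ref{thm8.3}, and the cousin $f_1^*\mapsto f^*$, using exactly your substitutions $df_1=-(f^{-1}\cdot df)\circ J$, $df_1\circ(I+J)=-(f^{-1}\cdot df)\circ(J-I)$ and $N_1^*=R^{\top}N_1$.

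Your composition of shape-operator rules giving $A^*=-JA$, and your converse via Gauss--Codazzi uniqueness together with the gauge $R_0\mapsto R_0Q$ and left translations, go beyond what the paper writes out. The paper effectively takes the concatenation as the definition of the conjugate and records $A^*=-JA$ only in the following remark, citing Lawson. Both of your additions are sound, given the orientation convention for $N^*$ that you flag.
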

\begin{proof}
Consider the isometric CMC~$1$ immersion $f_1\colon (\Sigma,ds^2) \to \R^3$, cousin to $f$, given 
by equation~\eqref{eq13} (defined uniquely up to a choice $f_1(p_0)\in \R^3$), and let
$N_1$ be the unit normal vector to $f_1$ given by~\eqref{eq12}. 
Consider
the conjugate CMC $1$ immersion $f_1^*\colon (\Sigma,ds^2)\to \R^3$ given by 
Theorem~\ref{thm8.3} (uniquely defined up to the choices of $f_1^*(p_0)\in \R^3$ and
$R(p_0)\in SO(3)$) and let $N_1^*=R^{\top}\, N_1$ be the unit normal vector to $f_1^*$ given
by~\eqref{eq146}. Finally, consider the isometric minimal immersion $f^*\colon (\Sigma,ds^2) \to \esf^3$, cousin to $f_1^*$, given by~\eqref{eq13} (with $f_1^*$ instead of $f_1$ and $f^*$ instead of $f$). Then,
\begin{eqnarray*}
df^*&=&f^*\cdot df_1^*\circ J\stackrel{\eqref{eq147}}{=}f^*\cdot R^{\top}df_1\circ J^2=
-f^*\cdot R^{\top}df_1\stackrel{\eqref{eq13}}{=}f^*\cdot R^{\top}(f^{-1}\cdot df)\circ J,
\hspace{1cm}
\\
dR&\stackrel{\eqref{eq148}}{=}&-[\phi\circ df_1\circ (I+J)]R
\stackrel{\eqref{eq13}}{=}
[\phi\circ (f^{-1}\cdot df)\circ (J-I)]R,
\end{eqnarray*}
which proves~\eqref{eq150} and~\eqref{eq151}. Also,
\[
N^*\stackrel{\eqref{eq12}}{=}f^*\cdot N_1^*\stackrel{\eqref{eq146}}{=}\\
f^*\cdot R^{\top}\, N_1\stackrel{\eqref{eq12}}{=}f^*\cdot R^{\top}(f^{-1}\cdot N),
\]
which gives~\eqref{eq152}.
\end{proof}

\begin{remark}
{\rm 
\begin{enumerate}
\item From any of the equations~\eqref{eq150} and~\eqref{eq152} we deduce
the geometric meaning of the map $R\colon \S\to SO(3)$, which tells us 
how to construct the tangent plane to $f^*$ at each point in terms of that
of $f$ (and vice versa); because of this, we will call $R$ the {\em frame map}. This generalizes the case of conjugate minimal
surfaces in $\R^3$ and that of the conjugate cousin minimal immersion 
in $\esf^3$ to a CMC $H=1$ surface in $\R^3$: in both situations, $R$
is constant (hence we can prescribe the initial condition so that $R= I_3\in SO(3)$), compare this to item~\ref{rmk3.1.1} of Remark~\ref{rmk3.1}.
		
\item Using the expression that relates the second fundamental forms of correspondent CMC immersions via the Lawson correspondence (see~Lawson~\cite[proof of Theorem~8]{la3}), it can be easily 
proven that the main geometric objects associated to $f,f^*$ are related as follows, with the obvious notation:
\begin{eqnarray}
\mbox{Second fundamental forms:}&\sigma^*(X,Y)=\sigma(X,JY).&\nonumber 
\\
\mbox{Shape operators:}&A^*=-J\circ A.&\nonumber
\\
\mbox{Principal curvatures:}&\kappa_{i}^*=\kappa_i, \ i=1,2.&\nonumber
\\
\mbox{Principal (resp. asymptotic) directions:}& \mbox{They interchange for $f$ and $f^*$.}\nonumber
\\
\mbox{Norm of second fundamental forms:}&|A^*|^2=|A|^2.\nonumber
\\
\mbox{Jacobi operators:}&\mathcal{L}_{f^*}=\mathcal{L}_{f}=\Delta+|A|^2+2.
\label{eq146'}
\end{eqnarray}
\end{enumerate}
}
\end{remark}

\subsection{\texorpdfstring{Interpretation of the frame map and its PDE}{Interpretation of the frame map and its PDE}}
Suppose that $f\colon (\Sigma,ds^2)\to \esf^3$ is an isometric immersion, not necessarily minimal. Define the one-forms
\begin{eqnarray}
\alpha&:=&f^{-1}\cdot df\colon T\Sigma\to \Im (\mathbb{H}),\nonumber
\\
\mathcal A&:=&
\phi\circ\alpha\circ(J-I)\colon T\Sigma\to \mathfrak{so}(3).
\label{A}
\end{eqnarray}
$\alpha$ is the (left) Maurer–Cartan form of $f$, or more precisely, the pullback by $f$ of the Maurer-Cartan form of $\mathbb{S}^3$. Thus, $\alpha$ satisfies 
\begin{equation*}
\begin{split}
d\alpha& =
d(f^{-1})\wedge df
+
f^{-1}\cdot d(df)=-f^{-1}\cdot df\cdot f^{-1}\wedge df
 \\
 & =
-\bigl(f^{-1}\cdot df\bigr)
\wedge
\bigl(f^{-1}\cdot df\bigr)
=
-\alpha\wedge\alpha,
\end{split}
\end{equation*}
or equivalently, 
\begin{equation}\label{alphaMC}
d\alpha+\alpha\wedge\alpha=0.	
\end{equation}

Consider the trivial Euclidean vector bundle
$E=\Sigma\times\mathbb R^3\to\Sigma$.
Since $\mathcal A$ is an $\mathfrak{so}(3)$-valued one-form, it defines a connection $\nabla^{\mathcal A}$ on the space $\Gamma(E)=C^{\infty}(\Sigma,\mathbb{R}^3)$ of sections of $E$, given by
\begin{equation*}
	\nabla^{\mathcal A}_Xv
	:=dv(X)-\mathcal A(X)v,
	\qquad
	X\in\mathfrak X(\Sigma),\quad
	v\in\Gamma(E).
\end{equation*}

The connection $\nabla^{\mathcal A}$ is metric. Indeed, since
$\mathcal A(X)$ is skew-symmetric,
\[
\langle \mathcal A(X)v,w\rangle
+\langle v,\mathcal A(X)w\rangle=0, \qquad \mbox{for all $X\in\mathfrak X(\Sigma),\quad v,w\in \Gamma(E)$,}
\]
and therefore $X\langle v,w\rangle
=\langle\nabla^{\mathcal A}_Xv,w\rangle
+\langle v,\nabla^{\mathcal A}_Xw\rangle$.

The curvature of $\nabla^{\mathcal A}$ is the endomorphism-valued two-form
defined by
\[
\mathcal R^{\mathcal A}(X,Y)v
:=\nabla^{\mathcal A}_X\nabla^{\mathcal A}_Yv
-\nabla^{\mathcal A}_Y\nabla^{\mathcal A}_Xv
-\nabla^{\mathcal A}_{[X,Y]}v.
\]
A direct computation gives
\begin{equation}\label{eq:A-connection-curvature}
	\mathcal R^{\mathcal A}(X,Y)v
	=-\bigl(d\mathcal A-\mathcal A\wedge\mathcal A\bigr)(X,Y)v,
\end{equation}
where $d\mathcal{A},\mathcal A\wedge\mathcal A\in \Omega^2(\Sigma,\mathfrak{so}(3))$ are, respectively, the exterior differential of $\mathcal A$ and the form defined by
\[
(\mathcal A\wedge\mathcal A)(X,Y)
=\mathcal A(X)\mathcal A(Y)-\mathcal A(Y)\mathcal A(X).
\]
Because of~\eqref{eq:A-connection-curvature}, the two-form $d\mathcal A-\mathcal A\wedge\mathcal A$ is called the {\em curvature form} of $\nabla^{\mathcal{A}}$.

\begin{lemma}\label{lema5.3}
The curvature form is related to the mean curvature function $H$ of $f$ with respect to a unit normal $N$ by the formula
\begin{equation*}
d\mathcal A-\mathcal A\wedge\mathcal A
=
-2H\,\phi(f^{-1}\cdot N)\,da.
\end{equation*}
\end{lemma}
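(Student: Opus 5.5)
The plan is to evaluate both sides of the identity on a positively oriented local orthonormal frame $\{X,Y\}$ of $T\Sigma$, with $JX=Y$ and $JY=-X$, and to compute $d\mathcal A$ and $\mathcal A\wedge\mathcal A$ separately. Since $\phi$ is linear and constant, $d\mathcal A=\phi\bigl(d(\alpha\circ J)-d\alpha\bigr)$, so the computation reduces to two $\Im(\H)$-valued two-forms. For the orientation I will use that $f^{-1}\cdot N=\alpha(X)\times\alpha(Y)$. This follows from the definition of $\det_{\bf p}$ via left translation, since $\{df(X),df(Y),N\}$ is a positively oriented frame of $T_f\esf^3$.

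\emph{Step 1: the term $d\alpha$.} By~\eqref{alphaMC} and (Q6),
\[
d\alpha(X,Y)=-\bigl(\alpha(X)\cdot\alpha(Y)-\alpha(Y)\cdot\alpha(X)\bigr)=-2\,\alpha(X)\times\alpha(Y)=-2f^{-1}\cdot N .
\]

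\emph{Step 2: the term $d(\alpha\circ J)$.} We have
\[
d(\alpha\circ J)(X,Y)=-X(\alpha(X))-Y(\alpha(Y))-\alpha(J[X,Y]).
\]
To evaluate this, I expand $X(\alpha(X))=-\alpha(X)\cdot\alpha(X)+f^{-1}\cdot X(df(X))$. I then use the Gauss formula in $\R^4$ for $\esf^3\subset\H$:
\[
X(df(X))=df(\nabla_XX)+\sigma(X,X)N-f .
\]
Two cancellations occur. First, $\alpha(X)\cdot\alpha(X)=-|\alpha(X)|^2{\bf e}=-{\bf e}$. Second, $f^{-1}\cdot f={\bf e}$. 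Together these make the two $\pm 2{\bf e}$ terms cancel. Then~\eqref{eq137'} kills the tangential terms, leaving
\[
d(\alpha\circ J)(X,Y)=-2H\,f^{-1}\cdot N .
\]
Hence, by Step~1,
\[
d\mathcal A(X,Y)=(2-2H)\,\phi(f^{-1}\cdot N).
\]

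\emph{Step 3: the term $\mathcal A\wedge\mathcal A$.} We have $\mathcal A(X)=\phi(\alpha(Y-X))$ and $\mathcal A(Y)=\phi(\alpha(-X-Y))$. By~\eqref{eq138''},
\[
(\mathcal A\wedge\mathcal A)(X,Y)=\phi\bigl(\alpha(Y-X)\times\alpha(-X-Y)\bigr)=2\,\phi\bigl(\alpha(X)\times\alpha(Y)\bigr)=2\,\phi(f^{-1}\cdot N).
\]
Subtracting gives $-2H\,\phi(f^{-1}\cdot N)$. Since $da(X,Y)=1$, this is the claimed identity.

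The main obstacle is Step~2. The sign bookkeeping there is delicate: the $-f$ term from the normal component of the sphere in $\R^4$ must cancel exactly against the real part of $\alpha\wedge\alpha$-type products. It is also necessary to check the orientation convention $f^{-1}\cdot N=\alpha(X)\times\alpha(Y)$ consistently with $\det_{\bf p}$, because a sign error there would flip the $\mathcal A\wedge\mathcal A$ contribution and destroy the cancellation of the $\pm2\phi(f^{-1}\cdot N)$ terms.
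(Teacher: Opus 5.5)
Your proposal is correct and follows the paper's proof essentially step by step. You compute $d\alpha$ from the Maurer--Cartan equation, $d(\alpha\circ J)$ from the Gauss formulas for $f$ in $\esf^3$ and $\esf^3$ in $\R^4$ (with the same cancellation of the $\pm{\bf e}$ terms), and $\mathcal A\wedge\mathcal A$ from \eqref{eq138''}. The only cosmetic difference is that you use an arbitrary orthonormal frame and remove the tangential terms with \eqref{eq137'}, whereas the paper takes a frame geodesic at the point so that $\nabla_{E_i}E_j$ and $[E_1,E_2]$ vanish there. Incidentally, the orientation danger you flag is harmless: the $d\alpha$ and $\mathcal A\wedge\mathcal A$ contributions are both multiples of $\alpha(X)\times\alpha(Y)$, so they cancel whichever sign convention is used.
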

\begin{proof}
Choose a positively oriented local orthonormal basis $\{E_1,E_2=JE_1\}$ of $T\Sigma$ such that $\nabla_{E_i}E_j(p)=0$, $i,j\in\{1,2\}$. In particular, $[E_1,E_2](p)=0$. Set
\begin{equation}\label{ab}
a:=\alpha(E_1)=f^{-1}\cdot df(E_1),
\qquad
b:=\alpha(E_2)=f^{-1}\cdot df(E_2),
\end{equation}
hence $N_1:=f^{-1}\cdot N=a\times b$ and $(a,b,N_1)$
is a positively oriented orthonormal basis of $\Im (\mathbb{H})$. 
Evaluating~\eqref{alphaMC} on $\{E_1,E_2\}$, we obtain
\[
d\alpha(E_1,E_2)=-(\alpha\wedge\alpha)(E_1,E_2)
=
-\bigl(
\alpha(E_1)\cdot\alpha(E_2)
-
\alpha(E_2)\cdot\alpha(E_1)
\bigr)=-(a\cdot b-b\cdot a).
\]
Since $a,b$ are imaginary quaternions, $a\cdot b-b\cdot a=2a\times b=2 N_1$, hence $d\alpha(E_1,E_2)=-2N_1$, or equivalently,
\begin{equation}\label{dalpha}
d\alpha=-2N_1\, da.
\end{equation}
On the other hand, at the point $p$, where $[E_1,E_2]=0$, we have
\begin{align}
d(\alpha\circ J)(E_1,E_2)
	&=
	E_1\bigl((\alpha\circ J)(E_2)\bigr)
	-
	E_2\bigl((\alpha\circ J)(E_1)\bigr)
	-
	(\alpha\circ J)([E_1,E_2])\notag\\
	&=
	-E_1(a)-E_2(b).
	\label{eq:dalphaJ-first}
\end{align}

It remains to compute $E_1(a)$ and $E_2(b)$. 
\begin{equation}\label{eq:E1a}
E_1(a)=E_1(f^{-1}\cdot df(E_1))	=
-f^{-1}\cdot df(E_1)\cdot f^{-1}\cdot df(E_1)
+
f^{-1}\cdot \overline{\nabla}_{E_1}\bigl(df(E_1)\bigr),
\end{equation}
where $\overline{\nabla}$ denotes the standard connection of
$\mathbb R^4\equiv \mathbb H$. Since $a$ is a unit imaginary quaternion, the first term in the last right-hand side is $-
a^2={\bf e}$. The second term can be computed by using the Gauss formulas of $f$ in $\mathbb{S}^3$ and of $\mathbb{S}^3$ in $\mathbb{R}^4$:
\[
\overline{\nabla}_{E_1}\bigl(df(E_1)\bigr)
=
df(\nabla_{E_1}E_1)
+
\sigma (E_1,E_1)N
-
\langle E_1,E_1\rangle f
=
\sigma (E_1,E_1)N-f,
\]
where we have used that $\nabla_{E_1}E_1=0$ at $p$. Therefore,
\[
f^{-1}\cdot \overline{\nabla}_{E_1}\bigl(df(E_1)\bigr)
=\sigma (E_1,E_1)N_1-{\bf e},
\]
and substituting in~\eqref{eq:E1a} we get
$E_1(a)=\sigma (E_1,E_1)N_1$. Analogously,
$E_2(b)=\sigma (E_2,E_2)N_1$. 
Consequently, \eqref{eq:dalphaJ-first} becomes
\[
d(\alpha\circ J)(E_1,E_2)
=
-(\sigma (E_1,E_1)+\sigma (E_2,E_2))N_1=-2H\, N_1,
\]
or equivalently,
\begin{equation}\label{dalphaJ}
d(\alpha\circ J)
=
-2H\, N_1\, da.
\end{equation}
Observe that formulas~\eqref{dalpha} and~\eqref{dalphaJ} hold globally on $\Sigma$, although to prove them we have chosen a basis $\{E_1,E_2\}$ adapted to a point $p$.

Let
\(
w:=\alpha\circ(J-I),
\)
so $\mathcal A=\phi\circ w$. Since $\phi\colon\mathbb R^3\to\mathfrak{so}(3)$ is a fixed linear
map, exterior differentiation commutes with $\phi$. More precisely,
$d\mathcal{A}=d(\phi \circ w)=\phi \circ dw$. Therefore, the curvature form of $\nabla^{\mathcal{A}}$ is determined by the skew-symmetric matrix
\begin{eqnarray*}
[d\mathcal A-\mathcal A\wedge\mathcal A](E_1,E_2)&=&
(\phi \circ dw)(E_1,E_2)- [(\phi\circ w)\wedge(\phi \circ w)](E_1,E_2) \\
&\stackrel{\eqref{eq138''}}{=}&
\phi(dw(E_1,E_2))-[\phi(w(E_1)),\phi(w(E_2))]
\\
&=&
\phi\bigl[
dw(E_1,E_2)-w(E_1)\times w(E_2)
\bigr].
\end{eqnarray*}	
On the other hand,
\begin{eqnarray*}
w(E_1)&=&[\alpha\circ(J-I)](E_1)=\alpha(E_2)-\alpha(E_1)=b-a,
\\
w(E_2)&=&[\alpha\circ(J-I)](E_2)=-\alpha(E_1)-\alpha(E_2)=-a-b,
\\
w(E_1)\times w(E_2)&=&(b-a)\times (-a-b)=2\, a\times b=2\, N_1.
\end{eqnarray*}

Consequently,
\begin{equation}\label{45}
[d\mathcal A-\mathcal A\wedge\mathcal A](E_1,E_2)
=
\phi\bigl(dw(E_1,E_2)\bigr)-2\phi(N_1).
\end{equation}
But
\begin{eqnarray*}
dw(E_1,E_2)&=&
d[\alpha\circ(J-I)](E_1,E_2)=
d(\alpha\circ J)(E_1,E_2)-d\alpha(E_1,E_2)
\\
&\stackrel{\eqref{dalpha},\eqref{dalphaJ}}{=}&
-2H\, N_1+2N_1=2(1-H)N_1.
\end{eqnarray*}
Plugging this into~\eqref{45} we get
\[
[d\mathcal A-\mathcal A\wedge\mathcal A](E_1,E_2)
=
\phi\bigl(2(1-H)N_1\bigr)-2\phi(N_1)=-2H\, \phi(N_1),
\]
which finishes the proof of the lemma.
\end{proof}

We again suppose that $f\colon \Sigma \to \mathbb{S}^3$ is minimal, satisfying the hypotheses of Theorem~\ref{thm8.4}. The first order PDE~\eqref{eq151} can be rewritten as
\begin{equation}\label{eq151a}
dR=\mathcal A R.
\end{equation}
Moreover, if $R=(r_1,r_2,r_3)$ is written in columns, then
\eqref{eq151a} is equivalent to $dr_i=\mathcal A r_i$, $i=1,2,3$,
and hence
\[
\nabla^{\mathcal A}r_i=0.
\]
Thus, the columns of $R$ form a positively oriented orthonormal parallel
frame for $\nabla^{\mathcal A}$.

Next we analyze the integrability condition of~\eqref{eq151a}, i.e. the obstruction to the local existence of a solution of this PDE. Differentiating~\eqref{eq151a} one gets
\[
0=d(dR)=d(\mathcal A R)\stackrel{(\star)}{=}
d\mathcal A\,R-\mathcal A\wedge dR
=
d\mathcal A\,R-\mathcal A\wedge (\mathcal{A} R)
=
\bigl(d\mathcal A-\mathcal A\wedge\mathcal A\bigr)R. 
\]
where in $(\star)$ we have used that $\mathcal A$ is a one-form.
Since $R$ is invertible, the last equality is equivalent to 
$d\mathcal A-\mathcal A\wedge\mathcal A=0$,
which is precisely the integrability condition of the first-order PDE~\eqref{eq151a} (and of equation~\eqref{eq151}). By Lemma~\ref{lema5.3}, this is equivalent to the flatness of the connection $\nabla^{\mathcal{A}}$ and to the minimality of
$f\colon (\Sigma,ds^2)\to \esf^3$ (this could be also deduced from Lemma~\ref{lem8.2} and the cousin correspondence).

Moreover, if $R=(r_1,r_2,r_3)$ is written in columns, then
\eqref{eq151a} is equivalent to $dr_i=\mathcal A r_i$, $i=1,2,3$,
and hence
\(
 \nabla^{\mathcal A}r_i=0.
\)
Thus, the columns of $R$ form a positively oriented orthonormal parallel
frame for $\nabla^{\mathcal A}$. 

We have seen that the integrability condition of~\eqref{eq151a} is equivalent to the flatness of the connection $\nabla^{\mathcal{A}}$.
This is also equivalent to the property that the parallel transport associated to $\nabla^{\mathcal{A}}$ is then locally independent of the path. 

\subsection{The frame map determines conjugate pairs of great circles and reflection geodesics}
\label{section5.2}
It is well-known (see Lawson~\cite[Proposition 13.1]{la3}) that for a pair of conjugate minimal surfaces $f,f^*\colon (\Sigma,ds^2) \to \esf^3$, given a curve $\gamma $ in the parameter surface $\S$, $f(\gamma )$ is a
geodesic arc in $\esf^3$ if and only if $f^*(\gamma )$ is a
geodesic of reflective symmetry. We now give a
more precise information about the relative position
of the ambient geodesic arc $f(\gamma )$ and the reflection two-sphere that contains $f^*(\gamma )$, via the frame map.

\begin{lemma}\label{lem8.14}
Suppose that $f,f^*
\colon (\Sigma,ds^2) \to \esf^3$ are conjugate minimal immersions, and let $\gamma $ be a smooth
curve in $\S$. If $\G=f(\gamma )$ is an arc of a great circle
in $\esf^3$, parametrized by arc length as $\G(t)=\cos t\, {\bf p}+\sin t \, {\bf v}$, $t\in [0,\ell]$ (here $({\bf p},{\bf v})\in U\esf^3$), then:
\begin{enumerate}
\item The left-translated tangent vector to $\G$ is constant; more
precisely,
\[
\alpha_{\gamma (t)}(\gamma '(t))=f(\gamma (t))^{-1}\cdot df_{\gamma (t)}(\gamma '(t))=\overline{\bf p}\cdot {\bf v}
\qquad \text{for every $t\in[0,\ell]$}.
\]

\item Denote by $\eta=J\, \gamma '$ the unit conormal vector to $\Sigma$ along $\gamma $, so that $\{ \gamma ',\eta \}$ is a positively oriented basis of $T\Sigma$. Then,
\[
{\bf a}:=df^*_{\gamma (t)}(\eta_{\gamma (t)})=
-f^*(\gamma (t))\cdot R_{\gamma (t)}^{\top}\alpha_{\gamma (t)}(\gamma '(t))
=-f^*(\gamma (t))\cdot R_{\gamma (t)}^{\top}(\overline{\bf p}\cdot {\bf v})
\] 
is a constant unit vector along~$\gamma $. Furthermore,
the curve $f^*(\gamma )$ is contained in
the two-sphere $\esf^3\cap \langle {\bf a}\rangle^{\perp}$.

\end{enumerate}
Conversely, if $f(\gamma )$ is a reflective geodesic in a totally
geodesic two-sphere
$\esf^3\cap\langle{\bf a}\rangle^\perp$, then, after fixing the sign
of $\eta$, $df_\gamma (\eta)={\bf a}$ is constant along $\gamma $, and $f^*(\gamma )$ is an arc of a great circle.
\end{lemma}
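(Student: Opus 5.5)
The plan is a direct computation along $\gamma$, using equations~\eqref{eq150}, \eqref{eq151} of Theorem~\ref{thm8.4} and the elementary quaternionic identities (Q1)--(Q7).

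\emph{Item 1.} Write $\Gamma(t)=f(\gamma(t))$, so $\Gamma'(t)=-\sin t\,{\bf p}+\cos t\,{\bf v}$ and $\Gamma^{-1}=\overline{\Gamma}=\cos t\,\overline{\bf p}+\sin t\,\overline{\bf v}$. Expanding, $\overline\Gamma\cdot\Gamma'=\cos^2t\,\overline{\bf p}\cdot{\bf v}-\sin^2t\,\overline{\bf v}\cdot{\bf p}+\sin t\cos t(\|{\bf v}\|^2-\|{\bf p}\|^2)$. The last term vanishes. Since $\langle {\bf p},{\bf v}\rangle=0$, the quaternion $\overline{\bf p}\cdot{\bf v}$ is imaginary, hence $\overline{\bf v}\cdot{\bf p}=\overline{\overline{\bf p}\cdot{\bf v}}=-\overline{\bf p}\cdot{\bf v}$. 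This gives $\alpha(\gamma')=\overline{\bf p}\cdot{\bf v}$, a constant unit imaginary quaternion.

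\emph{Item 2, the formula for ${\bf a}$.} By \eqref{eq150} and $J\eta=J^2\gamma'=-\gamma'$ we get $df^*(\eta)=f^*\cdot R^\top\alpha(J\eta)=-f^*\cdot R^\top\alpha(\gamma')$. We must show this is constant. Differentiate along $\gamma$ and set $u=\overline{\bf p}\cdot{\bf v}$, which is constant by Item~1. Then
\[
\frac{d}{dt}{\bf a}=-\Bigl[df^*(\gamma')\cdot R^\top u+f^*\cdot \tfrac{d}{dt}(R^\top)\,u\Bigr].
\]
For the first term, \eqref{eq150} gives $df^*(\gamma')=f^*\cdot R^\top\alpha(\eta)$. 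For the second, \eqref{eq151} gives $\tfrac{d}{dt}R^\top=-R^\top\mathcal A(\gamma')$ with $\mathcal A(\gamma')=\phi(\alpha(\eta)-\alpha(\gamma'))$. Hence, by \eqref{eq138'},
\[
\tfrac{d}{dt}(R^\top)u=-R^\top\bigl(\alpha(\eta)\times u\bigr),
\]
since $u\times u=0$. So the derivative reduces to
\[
-f^*\cdot\Bigl[(R^\top\alpha(\eta))\cdot(R^\top u)-R^\top(\alpha(\eta)\times u)\Bigr].
\]
The vectors $\alpha(\eta)$ and $u=\alpha(\gamma')$ are orthogonal unit imaginary quaternions. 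For such vectors (Q2) gives $x\cdot y=x\times y$. Moreover $R^\top\in SO(3)$ preserves cross products, so the bracket vanishes and ${\bf a}$ is constant. It is a unit vector because $\eta$ is unit and $f^*$ is isometric.

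\emph{Item 2, the sphere containment.} We have $\frac{d}{dt}\langle f^*,{\bf a}\rangle=\langle df^*(\gamma'),df^*(\eta)\rangle=0$ by orthogonality. Also $\langle f^*,{\bf a}\rangle=-\langle f^*,f^*\cdot R^\top u\rangle=-\langle{\bf e},R^\top u\rangle=0$. Thus $f^*(\gamma)\subset\langle{\bf a}\rangle^\perp$. For the geodesic curvature and reflection statement, Lawson's result~\cite[Proposition 13.1]{la3} can be quoted; the only content we add is the identification of ${\bf a}$.

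\emph{Converse.} If $f(\gamma)$ lies in $\langle{\bf a}\rangle^\perp$ and is a reflection geodesic, then the reflection fixes $f(\gamma)$ pointwise. It also maps the surface to itself, so it preserves the tangent plane along $\gamma$. The conormal $df(\eta)$ is tangent to $\esf^3$ and orthogonal both to $f$ and to $df(\gamma')\in\langle{\bf a}\rangle^\perp$. Since the reflection must send $df(\eta)$ to $\pm$ itself while fixing $T(\text{sphere})$, we get $df(\eta)=\pm{\bf a}$; fix the sign of $\eta$ accordingly. Then apply the conjugation symmetrically: $f$ is the conjugate of $f^*$, with $J$ replaced by $-J$ and $R$ by $R^\top$. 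From $df^*(\gamma')=-f^*\cdot R^\top\alpha(\eta)$ and $\alpha(\eta)=f^{-1}\cdot{\bf a}$, a computation analogous to the one above shows that $(f^*)^{-1}df^*(\gamma')$ is constant. A curve with constant left-translated unit velocity $u$ is of the form $q\exp(tu)$, i.e.\ a great circle.

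\textbf{Expected obstacle.} The main delicacy is this last step. The derivative of $(f^*)^{-1}df^*(\gamma')=-R^\top f^{-1}{\bf a}$ involves $\tfrac{d}{dt}(f^{-1}){\bf a}=-\alpha(\gamma')\cdot f^{-1}{\bf a}$, and we need $\alpha(\gamma')\cdot\alpha(\eta)$ to cancel against the $\phi$-term from $dR$. The term $-\phi(\alpha(\gamma'))$ in $\mathcal A(\gamma')$ contributes a cross product $\alpha(\gamma')\times\alpha(\eta)$, which equals the quaternion product because the two vectors are orthogonal. The term $\phi(\alpha(\eta))$ acting on $\alpha(\eta)$ vanishes. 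Checking that the signs of these contributions match the sign conventions in \eqref{eq151} is where care is required.
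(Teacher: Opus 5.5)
Your proofs of items~1 and~2 are essentially the paper's. The paper uses the same expansion of $\overline\Gamma\cdot\Gamma'$ and the same differentiation of $\mathbf a=-f^*\cdot R^\top\alpha(\gamma')$ via \eqref{eq150} and $dR^\top=-R^\top\mathcal A$. It relies on the same fact that the product of orthogonal imaginary quaternions is their cross product, which $R^\top$ preserves, and the same remark that $\mathbf a\in T_{f^*}\esf^3$.

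For the converse you take a genuinely different route. The paper argues with second fundamental forms: a reflection curve is an intrinsic geodesic and a curvature line, and the relation $\sigma^*(T,T)=\sigma(T,JT)$ from the Lawson table then kills both the geodesic and the normal curvature of $f^*(\gamma)$. You stay inside the first-order system. Once $df(\eta)=\mathbf a$ is constant, you show that the left-translated velocity $(f^*)^{-1}\cdot df^*(\gamma')=R^\top(f^{-1}\cdot\mathbf a)$ is constant, and then integrate $c'=c\cdot w$ to $c(t)=c(0)\cdot(\cos t\,\mathbf e+\sin t\,w)$.

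The sign check you flag as the obstacle does close:
\[
\frac{d}{dt}\bigl(R^\top(f^{-1}\cdot\mathbf a)\bigr)
=-R^\top\bigl((\alpha(\eta)-\alpha(\gamma'))\times\alpha(\eta)\bigr)-R^\top\bigl(\alpha(\gamma')\cdot\alpha(\eta)\bigr)
=R^\top\bigl(\alpha(\gamma')\times\alpha(\eta)\bigr)-R^\top\bigl(\alpha(\gamma')\times\alpha(\eta)\bigr)=0 .
\]

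Your route has several advantages. It is the exact dual of the computation in item~2. It needs only \eqref{eq150}--\eqref{eq151}, not Lawson's second-order relations. It uses the hypothesis only through the constancy of $df(\eta)$, so it does not need the geodesic and curvature-line properties separately. It also names the great circle explicitly, with direction $w=R^\top(f^{-1}\cdot\mathbf a)$, in parallel with item~1. The paper's argument is shorter, given the table.

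Two small corrections. First, by \eqref{eq150} and $J\gamma'=\eta$ one has $df^*(\gamma')=+f^*\cdot R^\top\alpha(\eta)$, without the minus sign you wrote twice; this does not affect constancy. Second, orthogonality of $df(\eta)$ to $f$ and to $df(\gamma')$ does not by itself force $df(\eta)=\pm\mathbf a$. What forces it is that $df(\eta)$ lies in the $-1$ eigenspace of the reflection's differential, i.e.\ that the surface meets the sphere orthogonally rather than tangentially along $\gamma$. The paper makes this assumption implicitly as well.
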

\begin{proof}
Put $T=\gamma '$. Since $f(\gamma (t))=\Gamma(t)$, we have
\begin{equation*}
\begin{split}
\alpha(T) &=\overline{\Gamma(t)}\cdot\Gamma'(t)
=
(\cos t\,\overline{\bf p}+\sin t\,\overline{\bf v})
\cdot(-\sin t\,{\bf p}+\cos t\,{\bf v})
 \\
 &=
\cos^2t\,\overline{\bf p}\cdot{\bf v}
-\sin^2t\,\overline{\bf v}\cdot{\bf p}
=
\overline{\bf p}\cdot{\bf v}.
\end{split}
\end{equation*}
In the last equality we used
$\overline{\bf v}\cdot{\bf p}=-\overline{\bf p}\cdot{\bf v}$,
which follows from $\langle{\bf p},{\bf v}\rangle=0$. This proves the
first assertion.

Set $N_1:=f^{-1}\cdot N$. Since $(T,\eta)$ is positively oriented and orthonormal, $(\alpha(T),\alpha(\eta),N_1)$ is a
positively oriented orthonormal basis of $\Im(\H)$. Evaluating~\eqref{eq150} on $\eta$ and using $J\eta=-T$, we obtain
\begin{equation}\label{eq:a-alpha-frame}
{\bf a}:=df^*(\eta)=-f^*\cdot R^{\top}\alpha(T).
\end{equation}

To see that ${\bf a}$ is constant along $\gamma $, we differentiate~\eqref{eq:a-alpha-frame} in the direction
$T$; all the following quantities are restricted to $\gamma $. Since $\alpha(T)$ is constant,~\eqref{eq:a-alpha-frame} gives
\begin{equation}\label{eq:a-alpha-derivative}
\frac{d{\bf a}}{dt}
=-df^*(T)\cdot R^{\top}\alpha(T)
  -f^*\cdot dR^{\top}(T)\alpha(T).
\end{equation}
By~\eqref{eq150},
\(
df^*(T)=f^*\cdot R^{\top}\alpha(\eta).
\)
Moreover, $dR=\mathcal A R$ and $\mathcal A^{\top}=-\mathcal A$ imply
\[
dR^{\top}=-R^{\top}\mathcal A,
\qquad
\mathcal A(T)=\phi\bigl(\alpha(JT-T)\bigr)=\phi(\alpha(\eta)-\alpha(T)).
\]
Because $R\in SO(3)$ preserves the cross product, and because the
quaternionic product of two orthogonal imaginary quaternions equals their cross product, we have
\begin{align*}
-df^*(T)\cdot R^{\top}\alpha(T)
&=-f^*\cdot(R^{\top}\alpha(\eta))\cdot(R^{\top}\alpha(T))
 =f^*\cdot R^{\top}N_1,\\
-f^*\cdot dR^{\top}(T)\alpha(T)
&=f^*\cdot R^{\top}\mathcal A(T)\alpha(T)
 =f^*\cdot R^{\top}\bigl((\alpha(\eta)-\alpha(T))\times \alpha(T)\bigr)
 =-f^*\cdot R^{\top}N_1.
\end{align*}
Thus, the two terms in~\eqref{eq:a-alpha-derivative} cancel and hence,
$\frac{d{\bf a}}{dt}=0$.
Furthermore, ${\bf a}$ is a unitary vector because $f^*$ is an isometric
immersion and $\eta$ is unit. Finally,
${\bf a}=df^*(\eta)\in T_{f^*(\gamma )}\esf^3$, and therefore
$\langle f^*(\gamma ),{\bf a}\rangle=0$.
It follows that $f^*(\gamma )\subset \esf^3\cap\langle{\bf a}\rangle^\perp$,
which proves the first statement of the lemma.

For the second statement, orthogonality of $f(\Sigma)$ to
$\esf^3\cap\langle{\bf a}\rangle^\perp$ along the reflection curve
implies, after choosing the sign of the conormal, that $df_\gamma (\eta)={\bf a}$.
Writing $T$ for the unit tangent to the reflection curve and
$\eta=JT$, the curve is an intrinsic geodesic and a curvature line.
Hence its geodesic curvature and $\sigma(T,\eta)$ vanish.
The two immersions induce the same metric, and
$\sigma^*(T,T)=\sigma(T,JT)=\sigma(T,\eta)=0$. Thus both the geodesic
curvature and the normal curvature of $f^*(\gamma )$ vanish, which means that
$f^*(\gamma )$ is an ambient geodesic in $\mathbb S^3$.
This completes the proof.
\end{proof}

\section{Jacobi conjugate functions}
\label{secJacconj}

We now linearize the conjugation system of
Theorem~\ref{thm8.4}. Let $f\colon(\Sigma,g)\to \mathbb S^3$
be a minimal immersion of a simply connected surface, and let
$u\in C^\infty(\Sigma)$ be a Jacobi function, i.e., $\mathcal L_fu=0$.  
We shall construct a vector field $X^*$ along the conjugate minimal immersion $f^*$ which formally
corresponds to conjugating a normal infinitesimal deformation $f_s$ of $f$ whose variational field is $uN$. 
The construction below does not assume that $u$ is integrable by a
family of minimal immersions; we will show that the formal first-order system to define $X^*$ is compatible
whenever $\mathcal{L}_fu=0$; equivalently, minimality of $f_s$ will only be required up to first order in $s$, 
and the normal component $u^*=\langle X^*,N^*\rangle$ will be a Jacobi function for $f^*$.

We shall use an actual normal variation $f_s$ only to compute first derivatives; these immersions $f_s$ need not be minimal when $s\ne0$. Nevertheless, if $H_s$ denotes the mean curvature function of $f_s$, we have 
\begin{equation}\label{eq:minimal-to-first-order}
2\dot H=\mathcal L_fu=0.
\end{equation}
Thus this variation is minimal to first order at $s=0$.

Let $g_s,J_s,N_s,H_s$ denote the metric, complex structure, unit
normal, and mean curvature of $f_s$, respectively, and consider the smooth one-form
\begin{equation*}
\alpha_s:=f_s^{-1}\cdot df_s\colon T\Sigma\to \Im (\mathbb{H}).
\end{equation*}
We use a dot to denote differentiation at $s=0$. Only the first jets
of these objects will be used; in particular, no frame map $R_s$ is
associated to the generally nonminimal immersions $f_s$.
\begin{remark}
{\rm 
The first jets obtained from any two normal variations of $f$ with initial velocity $uN$ coincide. Indeed, the formulas below express them solely in terms of $f,u$. 
}
\end{remark}

We begin with the variation of the metric and the complex structure; these formulas are well-known, but we include proofs for the sake of completeness.
\begin{lemma}\label{lemma8.7}
Let $f\colon(\Sigma,g)\to\mathbb S^3$ be an oriented isometric immersion, not
necessarily minimal, with second fundamental form $\sigma$, shape operator $A$, mean curvature function $H$, and traceless shape operator $B:=A-HI$.

For a normal variation with variational field $uN$, the first
variation of the metric and the complex structure are 
\begin{eqnarray}
\dot g(X,Y)&=&-2u\,g(AX,Y)=-2u\, \sigma (X,Y), \label{dotg}
\\
\dot J&=&-2uJB. \label{dotJ}
\end{eqnarray}
In particular, if $f$ is minimal, then
\begin{equation*}
\dot J=-2uJA.
\end{equation*}
Moreover,
\begin{equation}\label{eq:Jdot-anticommutes}
J\dot J+\dot J J=0.
\end{equation}
\end{lemma}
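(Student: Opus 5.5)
The plan is to compute everything in $\mathbb H\equiv\R^4$ by differentiating the identity $g_s(X,Y)=\langle df_s(X),df_s(Y)\rangle$ at $s=0$, where $f_s$ is any normal variation with $\dot f=uN$. Since $N$ is tangent to $\esf^3$ and orthogonal to $df(T\Sigma)$, we have $\dot g(X,Y)=\langle d(uN)(X),df(Y)\rangle+\langle df(X),d(uN)(Y)\rangle$. Expanding $d(uN)=du\,N+u\,dN$, the terms containing $du$ drop out because $N\perp df$. The Weingarten relation $dN=-df\circ A$ is valid in $\R^4$: the derivative of $N$ has no component along $f$, since $\langle dN,f\rangle=-\langle N,df\rangle=0$. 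Therefore $\dot g(X,Y)=-u\,g(AX,Y)-u\,g(X,AY)=-2u\,\sigma(X,Y)$, using that $A$ is $g$-self-adjoint. This gives~\eqref{dotg}.

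For $\dot J$, I will characterize $J_s$ as the unique endomorphism that is $g_s$-orthogonal, satisfies $J_s^2=-I$, and is positively oriented. Concretely, $J_s$ is determined by $g_s(J_sX,Y)=\omega_s(X,Y)$, where $\omega_s$ is the $g_s$-area form. Differentiating $J_s^2=-I$ gives $J\dot J+\dot JJ=0$ directly, which is~\eqref{eq:Jdot-anticommutes}.

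Next I differentiate the relation $g_s(J_sX,Y)=\omega_s(X,Y)$. This yields
\[
\dot g(JX,Y)+g(\dot JX,Y)=\dot\omega(X,Y).
\]
Since $\omega_s=\sqrt{\det g_s}\,dx\wedge dy$ in fixed coordinates, we get $\dot\omega=\tfrac12\,\mathrm{tr}_g(\dot g)\,\omega=-2uH\,\omega$. Hence
\[
g(\dot JX,Y)=-2uH\,g(JX,Y)+2u\,g(AJX,Y),
\]
so $\dot J=2u(AJ-HJ)=2uBJ$. It remains to convert this to the stated form. The traceless self-adjoint operator $B$ anticommutes with $J$ in dimension two, so $2uBJ=-2uJB$, which is~\eqref{dotJ}. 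When $H=0$ we have $B=A$, and this gives the minimal case.

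The only delicate point is bookkeeping of signs, namely the sign convention for $\omega$ in terms of $J$ and the identity $BJ=-JB$. I would double-check both on the orthonormal principal frame $\{e_1,e_2=Je_1\}$. In that frame $B=\mathrm{diag}(\lambda,-\lambda)$, and the anticommutation $BJ=-JB$ can be verified directly. As a consistency check, $\dot J$ computed this way should be $g$-symmetric, as it must be for the derivative of $g_s$-orthogonal structures.
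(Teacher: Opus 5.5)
Your proof is correct. The computation of $\dot g$ and the derivation of $J\dot J+\dot JJ=0$ from $J_s^2=-I$ are the same as in the paper; the difference is in how you obtain $\dot J$.

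The paper fixes a principal frame $\{e_1,e_2\}$ at a point and keeps it fixed while $s$ varies. It writes the matrix of $g_s$ to first order and substitutes it into the explicit formula $\frac{1}{\sqrt{EG-F^2}}\left(\begin{smallmatrix}-F&-G\\ E&F\end{smallmatrix}\right)$ for the compatible complex structure. This gives $\dot J=u(\kappa_2-\kappa_1)\left(\begin{smallmatrix}0&1\\ 1&0\end{smallmatrix}\right)$, which it then checks equals $-2uJB$ by matrix multiplication.

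You instead differentiate the invariant identity $g_s(J_s\cdot,\cdot)=\omega_s$ and use the first variation of the area form, $\dot\omega=\tfrac12\operatorname{tr}_g(\dot g)\,\omega=-2uH\,\omega$. Your sign convention is right: $g(Je_1,e_2)=1=\omega(e_1,e_2)$.

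Your route is frame-free and shows why only the traceless part of $A$ survives. The $H$-term coming from $\dot\omega$ cancels the trace part of $AJ$, leaving the intermediate form $\dot J=2uBJ$. It also needs no separate remark about umbilic points. The cost is the extra algebraic fact $BJ=-JB$ for traceless self-adjoint $B$ in dimension two, which the paper's direct multiplication avoids. In exchange, the paper's version produces the explicit matrix of $\dot J$ in the principal frame, which is convenient for sign checks.
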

\begin{proof}
Let $f_s\colon\Sigma\to\mathbb S^3$ be a normal variation of $f=f_0$ with variational field $\left.\frac{\partial f_s}{\partial s}\right|_{s=0}
=uN$. Denote by $g_s=f_s^*\langle\,\cdot,\cdot\,\rangle$ the metric induced by $f_s$. Fix two smooth vector fields $X,Y\in\mathfrak X(\Sigma)$. Differentiating $g_s(X,Y)=\left\langle df_s(X),df_s(Y)\right\rangle$ at $s=0$ we get
\begin{eqnarray*}
\dot g(X,Y)
&=&
\langle d(uN)(X),df(Y)\rangle + \langle df(X),d(uN)(Y)\rangle
\\
&=&
u\big( \langle dN(X),df(Y)\rangle + \langle df(X),dN(Y)\rangle \big)
\\
&=&
u\big( \langle \overline{\nabla}_{df(X)}N,df(Y)\rangle + \langle df(X),\overline{\nabla}_{df(Y)}N\rangle \big)
\\
&=&
u\big( \langle \nabla^{\mathbb{S}^3}_{df(X)}N,df(Y)\rangle + 
\langle df(X),\nabla^{\mathbb{S}^3}_{df(Y)}N\rangle \big)\\
&=&
-2u\, \sigma(X,Y)=-2u\, g(AX,Y),
\end{eqnarray*}
where $\nabla^{\mathbb{S}^3}$ is the Levi-Civita connection of $\mathbb{S}^3$. This proves~\eqref{dotg}.

As for item~\eqref{dotJ}, fix an arbitrary point $p\in\Sigma$ and choose a positively
oriented $g$-orthonormal basis $\{e_1,e_2=Je_1\}$ of $T_p\Sigma$
consisting of principal directions: $Ae_1=\kappa_1e_1$, $Ae_2=\kappa_2e_2$.

Extend $\{e_1,e_2\}$ to a local frame, which is kept fixed while $s$
varies (in particular, $\{e_1,e_2\}$ is not necessarily $g_s$-orthonormal). By~\eqref{dotg}, the matrix of $g_s$ at $p$ with respect to $\{e_1,e_2\}$ can be written as
\[
\begin{pmatrix}
1-2su\kappa_1&0\\
0&1-2su\kappa_2
\end{pmatrix}
+\mathcal{O}(s^2).
\]
For a positively oriented metric whose matrix in a fixed basis is
$\left(\begin{smallmatrix}E&F\\F&G\end{smallmatrix}\right)$, the
compatible complex structure has matrix
\[
\frac{1}{\sqrt{EG-F^2}}
\begin{pmatrix}-F&-G\\ E&F\end{pmatrix}.
\]
Applying this formula to $g_s$ gives, at $p$, that the matrix of $J_s$ with respect to $\{e_1,e_2\}$ is
\[
\begin{pmatrix}
0&-1+su(\kappa_2-\kappa_1)\\
1+su(\kappa_2-\kappa_1)&0
\end{pmatrix}
+\mathcal{O}(s^2).
\]
Therefore, the matrix of $\dot{J}$ with respect to $\{e_1,e_2\}$ is
\[
u(\kappa_2-\kappa_1)
\begin{pmatrix}0&1\\1&0\end{pmatrix}.
\]
On the other hand, the matrix of $B,J$ with respect to $\{e_1,e_2\}$ are, respectively,
\[
B=\frac{\kappa_1-\kappa_2}{2}
\begin{pmatrix}1&0\\0&-1\end{pmatrix},
\qquad
J=
\begin{pmatrix}0&-1\\1&0\end{pmatrix},
\]
and direct multiplication gives $\dot J=-2uJB$. Since the argument
is pointwise and allows $\kappa_1=\kappa_2$, the identity also holds
at umbilic points. Finally, $J_s^2=-I$ for every $s$. Differentiating at $s=0$
gives~\eqref{eq:Jdot-anticommutes}.
\end{proof}

We next compute the first variations of the forms that occur in the
conjugation system. Put
\begin{equation*}
\alpha_s:=f_s^{-1}\cdot df_s\colon T\Sigma\to \Im (\mathbb{H}),
\qquad
\mathcal A_s:=\phi\circ\alpha_s\circ(J_s-I)\colon T\Sigma\to \mathfrak{so}(3).
\end{equation*}

\begin{lemma}[Linearized equations for $\alpha$ and $\mathcal A$]
\label{lem:linearized-R}
\begin{eqnarray}
\dot\alpha
&=&-u\,\alpha\circ J+f^{-1}\cdot d(uN), \label{alphadot}
\\
\dot{\mathcal A}
&=&
\phi\circ
\bigl[\dot\alpha\circ(J-I)+\alpha\circ\dot J\bigr]
\label{eq:Adot-formal}
\\
&=&
\phi\circ\left[
u\,\alpha\circ(I+J)
+\bigl(f^{-1}\cdot d(uN)\bigr)\circ(J-I)
+\alpha\circ\dot J
\right].
\label{eq:Adot-expanded}
\end{eqnarray}
\end{lemma}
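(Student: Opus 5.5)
To prove \eqref{alphadot}, I would differentiate $\alpha_s=f_s^{-1}\cdot df_s$ at $s=0$ by the product rule:
\[
\dot\alpha=\frac{d}{ds}\Big|_{s=0}(f_s^{-1})\cdot df+f^{-1}\cdot d\dot f .
\]
Since $f_s\in\mathbb S^3$, we have $f_s^{-1}=\overline{f_s}$. Differentiating $f_s^{-1}\cdot f_s={\bf e}$ gives
\[
\frac{d}{ds}\Big|_{s=0}(f_s^{-1})=-f^{-1}\cdot\dot f\cdot f^{-1}=-u\,f^{-1}\cdot N\cdot f^{-1}.
\]
Here I use that $\dot f=uN$, and that $d$ commutes with $\partial_s$, so $\frac{d}{ds}df_s=d(uN)$. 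The first term is therefore $-u\,(f^{-1}\cdot N)\cdot(f^{-1}\cdot df)=-u\,N_1\cdot\alpha$, with $N_1=f^{-1}\cdot N$.

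The remaining task is the pointwise quaternionic identity $N_1\cdot\alpha(X)=\alpha(JX)$. Take a positively oriented orthonormal frame $\{E_1,E_2=JE_1\}$ and set $a=\alpha(E_1)$, $b=\alpha(E_2)$. Then $(a,b,N_1)$ is a positively oriented orthonormal basis of $\Im(\H)$, as in the proof of Lemma~\ref{lema5.3}. For orthogonal imaginary quaternions the product equals the cross product, by (Q2). Hence
\[
N_1\cdot a=N_1\times a=b=\alpha(JE_1),\qquad N_1\cdot b=N_1\times b=-a=\alpha(JE_2).
\]
This is consistent with the identity $\alpha(X)\cdot N_1=-\alpha(JX)$ already used in Section~\ref{secconj}. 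So the first term equals $-u\,\alpha\circ J$, which gives \eqref{alphadot}. The only care needed is the order of the quaternionic factors and the orientation sign; checking it against that earlier identity settles it.

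For \eqref{eq:Adot-formal}, I would differentiate $\mathcal A_s=\phi\circ\alpha_s\circ(J_s-I)$. The map $\phi$ is linear and constant, so the product rule gives $\phi\circ[\dot\alpha\circ(J-I)+\alpha\circ\dot J]$ immediately. For \eqref{eq:Adot-expanded}, I substitute \eqref{alphadot} into this expression. The term $-u\,\alpha\circ J\circ(J-I)$ simplifies to $u\,\alpha\circ(I+J)$ because $J^2=-I$. The term $f^{-1}\cdot d(uN)$ composed with $(J-I)$ is kept as it is, and $\alpha\circ\dot J$ is left unexpanded, although one could insert $\dot J=-2uJB$ from Lemma~\ref{lemma8.7}.

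I expect no real obstacle. The only delicate point is the sign and ordering in $N_1\cdot\alpha=\alpha\circ J$. This uses that $f_s^{-1}$ is the conjugate $\overline{f_s}$ only to first order; alternatively one differentiates $f_s^{-1}\cdot f_s={\bf e}$ directly, which avoids any normalization issue for the nonminimal $f_s$ (these stay in $\mathbb S^3$ anyway).
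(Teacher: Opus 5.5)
Your proposal is correct and follows the paper's proof essentially verbatim. You apply the product rule to $f_s^{-1}\cdot df_s$, derive $N_1\cdot\alpha(X)=\alpha(JX)$ from the positively oriented orthonormal basis $(\alpha(e_1),\alpha(e_2),N_1)$, and then use linearity of $\phi$ together with $J^2=-I$. One small correction to your closing aside: the normal variation $f_s$ stays in $\mathbb{S}^3$, so $f_s^{-1}=\overline{f_s}$ holds exactly, not only to first order. In any case, your derivation via $f_s^{-1}\cdot f_s={\bf e}$ does not use it.
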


\begin{proof}
Differentiating $\alpha_s=f_s^{-1}\cdot df_s$ at $s=0$ yields
\begin{equation}\label{dotalpha1}
\dot\alpha
=-(f^{-1}\cdot uN)\cdot(f^{-1}\cdot df)
+f^{-1}\cdot d(uN).
\end{equation}
Let $N_1:=f^{-1}\cdot N\in \Im (\mathbb{H})$, where $N$ is the unit normal that orients $f$. If $\{e_1,e_2=Je_1\}$ is a positively oriented orthonormal basis of $T\Sigma$, then 
$\{\alpha(e_1),\alpha(e_2),N_1\}$ is a positively oriented orthonormal
basis of $\Im (\mathbb{H})$. Since quaternionic multiplication of orthogonal imaginary quaternions agrees with their cross product, we deduce that $N_1\cdot \alpha(e_1)=\alpha(e_2)=\alpha(Je_1)$ and $N_1\cdot \alpha(e_2)=-\alpha(e_1)=\alpha(Je_2)$. Thus, 
\begin{equation}\label{alphaJ}
N_1\cdot\alpha(X)=\alpha(JX), \qquad \mbox{for every $X\in T\Sigma$.}
\end{equation}
Substituting~\eqref{alphaJ} into~\eqref{dotalpha1}
proves~\eqref{alphadot}.

Differentiating
$\mathcal A_s:=\phi\circ\alpha_s\circ(J_s-I)$ at $s=0$ gives
\eqref{eq:Adot-formal}. Substitution of~\eqref{alphadot}, together
with $J^2=-I$, gives~\eqref{eq:Adot-expanded}.
\end{proof}

We now separate the actual variation from the formal first-order
objects. If $E_s$ is any object associated
with the chosen variation, its first jet will be denoted by
$E_\varepsilon:=E+\varepsilon\dot E$.
All remainders $\mathcal O(\varepsilon^2)$ below are understood
locally in $C^k$ on compact subsets, for each fixed finite $k$. In
particular, the spatial differentiations of first-jet identities used
in the proof preserve the stated order. Actual variations are needed
only locally where they remain immersions.
Thus we set
\begin{equation}\label{alphaJAepsilon}
\begin{split}
g_\varepsilon:=g+\varepsilon\dot g,
\quad
J_\varepsilon:=J+\varepsilon\dot J,
\quad
\alpha_\varepsilon:=\alpha+\varepsilon\dot\alpha,
\\
\mathcal A_\varepsilon:=\mathcal A+
\varepsilon\dot{\mathcal A},
\quad
H_\varepsilon:=H+\varepsilon\dot H\stackrel{\eqref{eq:minimal-to-first-order}}{=}H=0.
\end{split}
\end{equation}

{\bf Claim.} The following relations hold:
\begin{eqnarray}
J_{\varepsilon}^2
&=&
-I+\mathcal{O}(\varepsilon^2),
\label{eqJepsilon2}
\\
g_\varepsilon(J_\varepsilon X,J_\varepsilon Y)
&=&
g_\varepsilon(X,Y)+\mathcal{O}(\varepsilon^2),
\label{eq:Jepsilon-compatible}
\\
g_\varepsilon(J_\varepsilon X,Y)
&=&
-g_\varepsilon(X,J_\varepsilon Y)
+\mathcal O(\varepsilon^2), \label{eq:Jepsilon-skew-adjoint}
\\
\langle\alpha_\varepsilon(X),\alpha_\varepsilon(Y)\rangle
&=&
g_\varepsilon(X,Y)+\mathcal{O}(\varepsilon^2),
\label{eq:alphaepsilon-isometric}
\\
d\alpha_\varepsilon+
\alpha_\varepsilon\wedge\alpha_\varepsilon
&=&
\mathcal{O}(\varepsilon^2),
\label{eq:formal-alpha}
\\
\mathcal A_\varepsilon
&=&	
\phi\circ\alpha_\varepsilon\circ(J_\varepsilon-I)+ \mathcal{O}(\varepsilon^2). \label{eq75}
\end{eqnarray}
{\em Proof of the claim.} Equation~\eqref{eqJepsilon2} follows from expanding $J_{\varepsilon}^2$ and using~\eqref{eq:Jdot-anticommutes}.
\begin{eqnarray*}
g_\varepsilon(J_\varepsilon X,J_\varepsilon Y)-g_\varepsilon(X,Y)
&\stackrel{\eqref{alphaJAepsilon}}{=}& [g(JX,JY)-g(X,Y)]
\\
&\quad +  & \varepsilon\left[ g(\dot{J}X,JY)+g(JX,\dot{J}Y)+\dot{g}(JX,JY)-\dot{g}(X,Y)\right] 
\\
&\quad +  & \mathcal{O}(\varepsilon^2)
\end{eqnarray*}
The first term vanishes because $J$ is an isometry of $g$, and the
second one also vanishes because it is the derivative at $s=0$ of
$g_s(J_sX,J_sY)-g_s(X,Y)=0$ (here we have chosen any actual smooth
variation $f_s$ of $f$ with variational field $uN$, possibly
non-minimal). This proves~\eqref{eq:Jepsilon-compatible}.

\[
g_\varepsilon(X,J_\varepsilon Y)
\stackrel{\eqref{eq:Jepsilon-compatible}}{=}
g_\varepsilon
(J_\varepsilon X,J_\varepsilon^2Y)
+\mathcal O(\varepsilon^2)
\stackrel{\eqref{eqJepsilon2}}{=}
-g_\varepsilon(J_\varepsilon X,Y)
+\mathcal O(\varepsilon^2),
\]
which demonstrates~\eqref{eq:Jepsilon-skew-adjoint}.
\begin{eqnarray*}
\langle \alpha_\varepsilon(X), \alpha_\varepsilon(Y)\rangle -g_\varepsilon(X,Y)
&\stackrel{\eqref{alphaJAepsilon}}{=}&
[\langle \alpha(X), \alpha(Y)\rangle -g(X,Y)]
\\
&\quad +& 
\varepsilon\left[ \langle \dot{\alpha}(X), \alpha(Y)\rangle +\langle \alpha(X),\dot{\alpha}(Y)\rangle -\dot{g}(X,Y)\right] 
+\mathcal{O}(\varepsilon^2).
\end{eqnarray*}
The first term vanishes because left multiplication by $f$ is an isometry of the ambient metric, and the second one also vanishes because it is the derivative at $s=0$ of $\langle \alpha_s(X), \alpha_s(Y)\rangle -g_s(X,Y)=0$. This proves~\eqref{eq:alphaepsilon-isometric}.

To prove~\eqref{eq:formal-alpha}, we recall that the Maurer--Cartan form $\alpha_s$ of $f_s$ satisfies $d\alpha_s+\alpha_s\wedge\alpha_s=0$ on $\Sigma$ (observe that this identity does not involve minimality). Differentiating at $s=0$ gives
\begin{equation}\label{eq:linearized-MC-alpha}
d\dot\alpha
+
\dot\alpha\wedge\alpha
+
\alpha\wedge\dot\alpha
=
0.
\end{equation}
Now,
\begin{align*}
d\alpha_\varepsilon
+\alpha_\varepsilon\wedge\alpha_\varepsilon
&\stackrel{\eqref{alphaJAepsilon}}{=}
d(\alpha+\varepsilon\dot\alpha)+(\alpha+\varepsilon\dot\alpha)\wedge (\alpha+\varepsilon\dot\alpha)
\\
&=
d\alpha+\alpha\wedge\alpha
+\varepsilon \left[
d\dot\alpha
+\dot\alpha\wedge\alpha
+\alpha\wedge\dot\alpha
\right]
+\varepsilon^2\dot\alpha\wedge\dot\alpha
\stackrel{\eqref{alphaMC},\eqref{eq:linearized-MC-alpha}}=
\mathcal{O}(\varepsilon^2),
\end{align*}
which gives~\eqref{eq:formal-alpha}. 

Given any $X\in T\Sigma$,
\begin{align*}
\alpha_\varepsilon\bigl((J_\varepsilon-I)X\bigr)
&\stackrel{\eqref{alphaJAepsilon}}{=}
	(\alpha+\varepsilon\dot\alpha)
	\bigl((J-I)X+\varepsilon\dot J(X)\bigr)\\
	&=
	\alpha\bigl((J-I)X\bigr)
	+\varepsilon
	\left[
	\dot\alpha\bigl((J-I)X\bigr)
	+
	\alpha\bigl(\dot J(X)\bigr)
	\right]
	+ \mathcal{O}(\varepsilon^2).
\end{align*}
Thus,
\[
\alpha_\varepsilon\circ(J_\varepsilon-I)
=
\alpha\circ(J-I)
+
\varepsilon
\left[
\dot\alpha\circ(J-I)+\alpha\circ\dot J
\right]
+ \mathcal{O}(\varepsilon^2).
\]
Since $\phi$ is linear, it follows that
\begin{eqnarray*}
	\phi\circ\alpha_\varepsilon\circ(J_\varepsilon-I)
	&=&
	\phi\circ\alpha\circ(J-I)
	+\varepsilon\,
	\phi\circ
	\left(
	\dot\alpha\circ(J-I)+\alpha\circ\dot J
	\right)
	+ \mathcal{O}(\varepsilon^2) 
	\\
	&\stackrel{\eqref{A},\eqref{eq:Adot-expanded}}{=}&
	\mathcal A+\varepsilon\dot{\mathcal A}
	+ \mathcal{O}(\varepsilon^2)
	\stackrel{\eqref{alphaJAepsilon}}{=}
	\mathcal A_\varepsilon
	+ \mathcal{O}(\varepsilon^2),
\end{eqnarray*}
which demonstrates~\eqref{eq75}.
This completes the proof of the claim.
\par\vspace{.2cm}
We emphasize that no $R_s$ has been introduced. The formal frame
map will be constructed only after its compatibility equation has
been proved from $\mathcal L_fu=0$.

\begin{theorem}\label{propos8.9}
Let $f\colon(\Sigma,g)\to\mathbb S^3$ be a minimal immersion of a simply connected surface, let $f^*$ be a conjugate minimal immersion, and let $R\colon\Sigma\to SO(3)$ be the corresponding frame map:
\begin{equation}\label{eq:conjugation-data-section6}
\alpha^*:=(f^*)^{-1}\cdot df^*=R^{\top}\alpha\circ J,
\qquad
dR=\mathcal AR.
\end{equation}
If $u$ is any Jacobi function for $f$, then the following hold.

\begin{enumerate}
\item The one-form $R^{\top}\dot{\mathcal A}R$ is closed. Therefore, for every $\Omega_0\in\mathfrak{so}(3)$, the initial value problem
\begin{equation}\label{eq:Omega-IVP}
d\Omega=R^{\top}\dot{\mathcal A}R,
\qquad
\Omega(p_0)=\Omega_0,
\end{equation}
has a unique global solution $\Omega\colon\Sigma\to\mathfrak{so}(3)$.

\item Define
\begin{equation}\label{dotalpha*}
\dot\alpha^*
:=-\Omega\alpha^*
+R^{\top}\bigl(\dot\alpha\circ J+\alpha\circ\dot J\bigr).
\end{equation}
Then, $\dot\alpha^*$ satisfies the linearized Maurer--Cartan equation
\begin{equation}\label{eq:linearized-MC-dotalpha*}
d\dot\alpha^*
+\alpha^*\wedge\dot\alpha^*
+\dot\alpha^*\wedge\alpha^*=0.
\end{equation}

\item For every $q_0\in\Im (\mathbb{H})$, the initial value problem
\begin{equation}
	\label{eq:q-linearized-system}
	dq+\alpha^*\cdot q-q\cdot \alpha^*=\dot\alpha^* \quad \mbox{on $\Sigma$,}
	\qquad
	q(p_0)=q_0,
\end{equation}
has a unique global solution $q\colon \Sigma \to \Im (\mathbb{H})$ (here, both $dq$, $\dot\alpha^*$ and $\alpha^*\cdot q-q\cdot \alpha^*$ are one-forms on $\Sigma$ taking values on $\Im (\mathbb{H})$, while both $\alpha^*\cdot q$ and $q\cdot \alpha^*$ take values on $\mathbb{H}$). Thus, the vector field
$X^*:=f^*\cdot q\colon \Sigma \to T\mathbb{S}^3$
is tangent to $\mathbb S^3$ along $f^*$, and its normal component
$u^*:=\langle X^*,N^*\rangle$ is a Jacobi function for $f^*$, i.e., $\mathcal L_{f^*}u^*=0$.
\end{enumerate}
\end{theorem}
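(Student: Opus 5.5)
The plan is to prove the three items in order, reading each as the linearization of an integrability statement already established for the nonlinear system, with first jets used in place of an actual minimal deformation.

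\emph{Item 1.} Closedness of $R^{\top}\dot{\mathcal A}R$ reduces, after expanding $d(R^{\top}\dot{\mathcal A}R)$ with $dR=\mathcal AR$ and $dR^{\top}=-R^{\top}\mathcal A$, to
\[
d\dot{\mathcal A}-\mathcal A\wedge\dot{\mathcal A}-\dot{\mathcal A}\wedge\mathcal A=0,
\]
i.e.\ the vanishing of the linearized curvature form. To compute this I would use the first jets. By the Claim, $\alpha_\varepsilon$ is a Maurer--Cartan form up to $\mathcal O(\varepsilon^2)$, it is isometric for $g_\varepsilon$, $J_\varepsilon$ is a compatible complex structure, and $\mathcal A_\varepsilon=\phi\circ\alpha_\varepsilon\circ(J_\varepsilon-I)$. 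The proof of Lemma~\ref{lema5.3} used only these algebraic and differential identities together with the Gauss formula, which produced $d(\alpha\circ J)=-2HN_1\,da$. Hence I would redo that computation to first order and obtain
\[
d\mathcal A_\varepsilon-\mathcal A_\varepsilon\wedge\mathcal A_\varepsilon=-2H_\varepsilon\,\phi(N_{1,\varepsilon})\,da_\varepsilon+\mathcal O(\varepsilon^2).
\]
The cleanest route is to apply Lemma~\ref{lema5.3} to the actual, generally nonminimal, variation $f_s$ and differentiate at $s=0$. Since $H=0$ and $\dot H=\tfrac12\mathcal L_fu=0$ by~\eqref{eq:minimal-to-first-order}, the derivative of the right-hand side vanishes, which is exactly the required identity. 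Global existence of $\Omega$ then follows from simple connectivity. Uniqueness holds because $\Omega$ is determined up to a constant, and that constant is fixed by $\Omega_0$.

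\emph{Item 2.} I would define $R_\varepsilon:=R(I+\varepsilon\Omega)$, which lies in $SO(3)$ modulo $\mathcal O(\varepsilon^2)$ and satisfies $dR_\varepsilon=\mathcal A_\varepsilon R_\varepsilon+\mathcal O(\varepsilon^2)$; this is checked directly from $d\Omega=R^{\top}\dot{\mathcal A}R$. Expanding $\alpha^*_\varepsilon:=R_\varepsilon^{\top}\alpha_\varepsilon\circ J_\varepsilon$ to first order gives precisely~\eqref{dotalpha*}, using $R_\varepsilon^{\top}=(I-\varepsilon\Omega)R^{\top}$ and noting that $\Omega$ acts on imaginary quaternions. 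The equation~\eqref{eq:linearized-MC-dotalpha*} is then the $\varepsilon$-coefficient of $d\alpha^*_\varepsilon+\alpha^*_\varepsilon\wedge\alpha^*_\varepsilon=\mathcal O(\varepsilon^2)$. To verify that identity I would repeat the argument of Theorem~\ref{thm8.4}, i.e.\ Lemma~\ref{lem8.1} transported through the cousin correspondence, with all data replaced by their jets. That argument gives the Maurer--Cartan equation for $R^{\top}\alpha\circ J$ from $dR=\mathcal AR$, from $d\alpha+\alpha\wedge\alpha=0$, from the isometry and compatibility relations, and from $H=0$, all of which hold to first order by the Claim and item 1. \textbf{This is the step I expect to be the main obstacle.} The nonlinear proof uses pointwise adapted orthonormal frames and $\nabla_{E_i}E_j(p)=0$, so one must check that a $g_\varepsilon$-orthonormal frame can be chosen to first order and that every identity used survives to $\mathcal O(\varepsilon^2)$. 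If that bookkeeping becomes unwieldy, my fallback is a direct computation of $d\dot\alpha^*+\alpha^*\wedge\dot\alpha^*+\dot\alpha^*\wedge\alpha^*$ using~\eqref{alphadot},~\eqref{dotJ} and $d\Omega=R^{\top}\dot{\mathcal A}R$, in which the terms involving $\Omega$ cancel by the Maurer--Cartan equation for $\alpha^*$.

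\emph{Item 3.} Consider the $\Im(\H)$-valued connection $dq+[\alpha^*,q]$ on the trivial bundle. By $d\alpha^*+\alpha^*\wedge\alpha^*=0$ this connection is flat, and~\eqref{eq:linearized-MC-dotalpha*} is exactly the compatibility condition for the inhomogeneous equation $dq+[\alpha^*,q]=\dot\alpha^*$. This is verified by applying $d$ to both sides and substituting; one also checks that $[\alpha^*,q]$ is imaginary. Frobenius and simple connectivity then give a unique $q$. Next, $X^*=f^*\cdot q$ is tangent to $\mathbb S^3$ because $q$ is imaginary, and the equation for $q$ says exactly that $f^*_\varepsilon:=f^*(1+\varepsilon q)$ has Maurer--Cartan form $\alpha^*_\varepsilon+\mathcal O(\varepsilon^2)$. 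Hence $f^*_\varepsilon$ is, to first order, a variation of $f^*$ through immersions that are isometric for $g_\varepsilon$, with complex structure $J_\varepsilon$ and zero mean curvature. More concretely, by Lemma~\ref{lema5.3} and the relation $\alpha^*_\varepsilon=R_\varepsilon^{\top}\alpha_\varepsilon\circ J_\varepsilon$, we get $d(\alpha^*_\varepsilon\circ J_\varepsilon)=\mathcal O(\varepsilon^2)$, and therefore $\dot H^*=0$. The standard first variation formula gives $2\dot H^*=\mathcal L_{f^*}u^*$ plus terms coming from the tangential part of $X^*$. Those tangential terms vanish because $H^*\equiv0$, so $\mathcal L_{f^*}u^*=0$.
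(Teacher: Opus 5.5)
The overall architecture matches the paper's proof: item 1, the construction of $R_\varepsilon$ and $\alpha^*_\varepsilon$, and the existence part of item 3 are the same. The gap is exactly where you flagged the obstacle. You never establish $d\alpha^*_\varepsilon+\alpha^*_\varepsilon\wedge\alpha^*_\varepsilon=\mathcal O(\varepsilon^2)$, and this identity is the core of the theorem.

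Your list of first-order inputs is also incomplete. Besides $dR_\varepsilon=\mathcal A_\varepsilon R_\varepsilon+\mathcal O(\varepsilon^2)$, the Maurer--Cartan equation for $\alpha_\varepsilon$ and the isometry relations, the nonlinear argument needs $d(\alpha\circ J)=0$. That identity comes from the Gauss formula together with minimality, not from $d\alpha+\alpha\wedge\alpha=0$. The Claim only records that the first jet $H_\varepsilon$ of the mean curvature vanishes. It does not contain $d(\alpha_\varepsilon\circ J_\varepsilon)=\mathcal O(\varepsilon^2)$, and getting that requires a Gauss-formula identity.

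Routing the argument through the cousin correspondence does not help either. The nonminimal $f_s$ have no cousins, so you would have to linearize the cousin construction and the CMC conjugation as well.

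The missing idea is the one you already used in item 1. Use only identities valid for \emph{every} oriented immersion $h\colon\Sigma\to\mathbb S^3$, apply them to the actual variation $f_s$, and take first jets. Frames then only ever appear for the honest metrics $g_s$, so your worry about $g_\varepsilon$-adapted frames disappears.

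Concretely, put $\beta_\varepsilon=\alpha_\varepsilon\circ J_\varepsilon$. Item 1 gives $d\alpha^*_\varepsilon=R_\varepsilon^{\top}(d\beta_\varepsilon-\mathcal A_\varepsilon\wedge\beta_\varepsilon)+\mathcal O(\varepsilon^2)$. With $\mathcal A_h:=\phi\circ\alpha_h\circ(J_h-I)$, use the identities $d(\alpha_h\circ J_h)(X,Y)=-2H_h\,\alpha_h(X)\times\alpha_h(Y)$ and $\bigl(\mathcal A_h\wedge(\alpha_h\circ J_h)\bigr)(X,Y)=2\,\alpha_h(X)\times\alpha_h(Y)$. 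Applied to $h=f_s$ and combined with $H_\varepsilon=0$, they give $d\alpha^*_\varepsilon(X,Y)=-2R_\varepsilon^{\top}\bigl(\alpha_\varepsilon(X)\times\alpha_\varepsilon(Y)\bigr)+\mathcal O(\varepsilon^2)$.

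On the other side, $R_\varepsilon^{\top}$ preserves cross products to first order and $\det J_\varepsilon=1+\mathcal O(\varepsilon^2)$. Hence $(\alpha^*_\varepsilon\wedge\alpha^*_\varepsilon)(X,Y)=2R_\varepsilon^{\top}\bigl(\alpha_\varepsilon(X)\times\alpha_\varepsilon(Y)\bigr)+\mathcal O(\varepsilon^2)$, and adding the two gives the claim. Your fallback direct computation reduces, after the $\Omega$-terms cancel, to linearizing $d\beta_h-\mathcal A_h\wedge\beta_h+\beta_h\wedge\beta_h=-2H_h\,\alpha_h\times\alpha_h$. So it needs the same device.

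For the Jacobi property in item 3 your route genuinely differs from the paper's, and it works once repaired. Let $\widehat J_\varepsilon$ denote the complex structure of $f^*_\varepsilon$. You use $d(\theta_\varepsilon\circ\widehat J_\varepsilon)(X,Y)=-2H^*_\varepsilon\,\theta_\varepsilon(X)\times\theta_\varepsilon(Y)$ for the actual $f^*_\varepsilon$. The paper instead compares left-trivialized normals and Weingarten equations to get $A^*_\varepsilon=-J_\varepsilon A_\varepsilon+\mathcal O(\varepsilon^2)$.

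To justify your version, three repairs are needed:
\begin{enumerate}
\item Compute $d(\alpha^*_\varepsilon\circ J_\varepsilon)$ directly rather than citing Lemma~\ref{lema5.3}. Since $\alpha^*_\varepsilon\circ J_\varepsilon=-R_\varepsilon^{\top}\alpha_\varepsilon+\mathcal O(\varepsilon^2)$, its differential is $R_\varepsilon^{\top}(\mathcal A_\varepsilon\wedge\alpha_\varepsilon+\alpha_\varepsilon\wedge\alpha_\varepsilon)+\mathcal O(\varepsilon^2)$. This is $\mathcal O(\varepsilon^2)$ by the pointwise identity $\mathcal A_h\wedge\alpha_h=-\alpha_h\wedge\alpha_h$.
\item Show $\widehat J_\varepsilon=J_\varepsilon+\mathcal O(\varepsilon^2)$ in $C^1$. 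This follows from $\widehat g_\varepsilon=g_\varepsilon+\mathcal O(\varepsilon^2)$.
\item Replace $f^*(1+\varepsilon q)$, which leaves $\mathbb S^3$, by $f^*\exp(\varepsilon q)$.
\end{enumerate}

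This is the argument the paper itself uses in the CMC case. It is shorter, but the paper's route also yields the shape-operator comparison that is reused in Theorem~\ref{thm:N-to-D}.
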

\begin{proof}
We first prove the compatibility of the equation for $\Omega$,  i.e., that $R^{\top}\dot{\mathcal A}R$ is closed. We may compute the first variation of $d\mathcal A-\mathcal A\wedge\mathcal A$ using any smooth normal variation with initial velocity $uN$ (nearby immersions $f_s$ need not be minimal), and Lemma~\ref{lema5.3} gives
\[
\left.\frac{d}{ds}\right|_{s=0}
(d\mathcal{A}_s-\mathcal{A}_s\wedge\mathcal{A}_s)	
=
-2\left.\frac{d}{ds}\right|_{s=0}H_s\,\phi(f_s^{-1}\cdot N_s)\,da_s
\]
Since $f$ is minimal, all the terms obtained by
differentiating $\phi(f_s^{-1}\cdot N_s)\,da_s$ are multiplied by $H=0$. Thus,
\[
\left.\frac{d}{ds}\right|_{s=0}
(d\mathcal{A}_s-\mathcal{A}_s\wedge\mathcal{A}_s)	
=
-2\dot{H}\,\phi(f^{-1}\cdot N)\,da
=-(\mathcal{L}_fu)\,\phi(f^{-1}\cdot N)\,da.
\]
The last expression vanishes since $u$ is a Jacobi function for $f$,
and so, 	
\(
d\dot{\mathcal A}
-\dot{\mathcal A}\wedge\mathcal A
-\mathcal A\wedge\dot{\mathcal A}
=
0.
\)
Using $dR=\mathcal AR$, $dR^{\top}=-R^{\top}\mathcal A$, and the
Leibniz rule, we obtain
\begin{align*}
	d\bigl(R^{\top}\dot{\mathcal A}R\bigr)
	&=R^{\top}
	\bigl(
	d\dot{\mathcal A}
	-\mathcal A\wedge\dot{\mathcal A}
	-\dot{\mathcal A}\wedge\mathcal A
	\bigr)R=0.
\end{align*}
Since $\Sigma$ is simply connected,
\eqref{eq:Omega-IVP} has a unique global solution. This proves the
first item of the theorem.

Set
\begin{equation}\label{eq:formal-R-epsilon}
\dot R:=R\Omega,
\qquad
R_\varepsilon:=R+\varepsilon\dot R=R(I+\varepsilon\Omega);
\end{equation}
observe that since $\Omega$ is a smooth map from $\Sigma $ to $\mathfrak{so}(3)$, $\dot{R}$ is a section of the induced bundle $R^*TSO(3)$ over $\Sigma$. $R_{\varepsilon}$ is a smooth map from $\Sigma$ to
$\mathcal M_3(\mathbb R)$, treated here as a formal first jet. We do not
assume that it arises from frame maps associated with a family of
minimal immersions. It is the first jet of the $SO(3)$-valued family
$R\exp(\varepsilon\Omega)$, but this algebraic realization does not
supply a nonlinear conjugate family.

Since $\Omega^{\top}=-\Omega$,
\begin{equation}\label{eq:R-epsilon-orthogonal}
	R_\varepsilon^{\top}R_\varepsilon
	\stackrel{\eqref{eq:formal-R-epsilon}}{=}
	(I+\varepsilon\Omega)^{\top}
	R^{\top}R
	(I+\varepsilon\Omega)=
	(I-\varepsilon\Omega)(I+\varepsilon\Omega)
	=I-\varepsilon ^2\Omega^2
	\quad \mbox{on $\Sigma$.}
\end{equation}
Furthermore,
\begin{align}
dR_\varepsilon
&\stackrel{\eqref{eq:formal-R-epsilon}}{=}
dR+\varepsilon\,d(R\Omega)
\stackrel{\eqref{eq151a}}{=}
\mathcal AR
+
\varepsilon
\bigl(
\mathcal AR\Omega+R\,d\Omega
\bigr) \nonumber
\\
&\stackrel{(\star)}{=}
\mathcal AR
+
\varepsilon
\bigl(
\mathcal AR\Omega+\dot{\mathcal A}R
\bigr)
=
(\mathcal A+\varepsilon\dot{\mathcal A})
(R+\varepsilon R\Omega)-\varepsilon ^2\dot{\mathcal A}R\Omega \nonumber
\\
&\stackrel{\eqref{eq:formal-R-epsilon},\eqref{alphaJAepsilon}}{=}
\mathcal A_\varepsilon R_\varepsilon
+\mathcal{O}(\varepsilon^2), \label{eq:formal-R}
\end{align}
where in $(\star)$ we have used item 1 of this proposition. Using~\eqref{eq:R-epsilon-orthogonal} 
and~\eqref{eq:formal-R}, we have
\begin{equation}\label{eq80}
dR_\varepsilon^{\top}
=
-R_\varepsilon^{\top}\mathcal A_\varepsilon
+\mathcal{O}(\varepsilon^2) 
\quad \mbox{on $\Sigma$.}
\end{equation}

Define
\begin{equation}\label{eq:formal-alpha-star}
\alpha_\varepsilon^*
:=R_\varepsilon^{\top}\alpha_\varepsilon\circ J_\varepsilon.
\end{equation}

Since $R_\varepsilon^{\top}=
(I-\varepsilon\Omega)R^{\top}$ (by~\eqref{eq:formal-R-epsilon}),
expanding \eqref{eq:formal-alpha-star} and using~\eqref{alphaJAepsilon} gives
\begin{align}
	\alpha_\varepsilon^*
	&=
	(I-\varepsilon\Omega)R^{\top}
	(\alpha+\varepsilon\dot\alpha)
	\circ(J+\varepsilon\dot J) \nonumber
	\\
	&=
	R^{\top}\alpha\circ J
	+\varepsilon
	\left[
	-\Omega R^{\top}\alpha\circ J
	+
	R^{\top}
	\bigl(
	\dot\alpha\circ J+\alpha\circ\dot J
	\bigr)
	\right]
	+\mathcal{O}(\varepsilon^2)  \nonumber
	\\
	&\stackrel{\eqref{eq:conjugation-data-section6}}{=}
	\alpha^*+\varepsilon\dot\alpha^*
	+\mathcal{O}(\varepsilon^2),  \label{alphaepsilon*}
\end{align}
where $\dot\alpha^*$ is precisely~\eqref{dotalpha*}.

We next prove that $\alpha_\varepsilon^*$ satisfies the
Maurer--Cartan equation up to second order. Let
\(
\beta_\varepsilon
:=
\alpha_\varepsilon\circ J_\varepsilon,
\)
so that~\eqref{eq:formal-alpha-star} can be rewritten as 
$\alpha_\varepsilon^*=R_\varepsilon^{\top}\beta_\varepsilon$
on $\Sigma$. Applying the Leibniz rule to this equation, we obtain
\begin{eqnarray}
	d\alpha_\varepsilon^*
	&=&
	dR_\varepsilon^{\top}\wedge\beta_\varepsilon
	+
	R_\varepsilon^{\top}d\beta_\varepsilon \nonumber
	\\
	&\stackrel{\eqref{eq80}}{=}&
	-R_\varepsilon^{\top}
	\mathcal A_\varepsilon\wedge\beta_\varepsilon
	+
	R_\varepsilon^{\top}d\beta_\varepsilon
	+\mathcal{O}(\varepsilon^2) 
	=
	R_\varepsilon^{\top}
	\left(
	d\beta_\varepsilon
	-
	\mathcal A_\varepsilon\wedge\beta_\varepsilon
	\right)
	+\mathcal{O}(\varepsilon^2). \label{dalphaepsilon*}
\end{eqnarray}

Here, $\mathcal A_\varepsilon\wedge\beta_\varepsilon$ denotes the following $\Im (\mathbb{H})$-valued two-form on $\Sigma$:
\begin{equation*}
	(\mathcal A_\varepsilon\wedge\beta_\varepsilon)(X,Y)
	=
	\mathcal A_\varepsilon(X)\beta_\varepsilon(Y)
	-
	\mathcal A_\varepsilon(Y)\beta_\varepsilon(X), 
\qquad X,Y\in T\Sigma.
\end{equation*}
Next we use two identities valid for every oriented immersion $h\colon\Sigma\to\mathbb S^3$. If
$\alpha_h=h^{-1}dh$, $J_h$ is its complex structure, and $H_h$ is its mean curvature function, then, for all $X,Y\in T\Sigma$,
\begin{align}
d(\alpha_h\circ J_h)(X,Y)
&=-2H_h\,\alpha_h(X)\times\alpha_h(Y),
\label{eq:dbeta-general}\\
\bigl(
[\phi\circ\alpha_h\circ(J_h-I)]\wedge(\alpha_h\circ J_h)
\bigr)(X,Y)
&=2\,\alpha_h(X)\times\alpha_h(Y).
\label{eq:A-beta-general}
\end{align}

Equation~\eqref{eq:dbeta-general} is precisely
\eqref{dalphaJ}, applied to the immersion \(h\).
Equation~\eqref{eq:A-beta-general} is algebraic; to check it, choose a
positively oriented \(g_h\)-orthonormal basis $\{e_1,e_2=J_he_1\}$, and set
$a:=\alpha_h(e_1)$, $b:=\alpha_h(e_2)$. Then, $(\alpha_h\circ J_h)(e_1)=b$, $(\alpha_h\circ J_h)(e_2)=-a$,
and
\[
\bigl[\phi\circ\alpha_h\circ(J_h-I)\bigr](e_1)
=\phi(b-a),
\qquad 
\bigl[\phi\circ\alpha_h\circ(J_h-I)\bigr](e_2)
=\phi(-a-b).
\]
Therefore,
\begin{eqnarray*}
\bigl(	[\phi\circ\alpha_h\circ(J_h-I)]
\wedge(\alpha_h\circ J_h)
\bigr)(e_1,e_2)
&=&
\phi(b-a)(-a)-\phi(-a-b)b
\\
&=&
(b-a)\times(-a)-(-a-b)\times b
\\
&=&
2a\times b,
\end{eqnarray*}
which proves~\eqref{eq:A-beta-general}.

We now apply~\eqref{eq:dbeta-general} to the actual variation
$f_s$. Writing
\(
\beta_s:=\alpha_s\circ J_s,
\)
we have, for every~$s$, $d\beta_s(X,Y)
=
-2H_s\,\alpha_s(X)\times\alpha_s(Y)$.
Taking first jets at $s=0$, and using the notation introduced
in~\eqref{alphaJAepsilon}, gives
\[
d\beta_\varepsilon(X,Y)
=
-2H_\varepsilon\,
\alpha_\varepsilon(X)\times\alpha_\varepsilon(Y)
+\mathcal O(\varepsilon^2).
\]
Since $H_\varepsilon
=
H+\varepsilon\dot H
=
0$, we conclude that
\begin{equation}\label{eq:d-beta-formal}
d\beta_\varepsilon(X,Y)
=
\mathcal O(\varepsilon^2).
\end{equation}

Arguing similarly with~\eqref{eq:A-beta-general} we obtain
\begin{equation}
(\mathcal A_\varepsilon\wedge\beta_\varepsilon)(X,Y)
=2\, \alpha_\varepsilon(X)\times\alpha_\varepsilon(Y)+\mathcal{O}(\varepsilon^2).
\label{eq:A-beta-formal}
\end{equation}
Now,
\begin{eqnarray}
(\alpha_\varepsilon^*\wedge\alpha_\varepsilon^*)(X,Y)
&=&
\alpha_\varepsilon^*(X)\cdot\alpha_\varepsilon^*(Y)
-
\alpha_\varepsilon^*(Y)\cdot\alpha_\varepsilon^*(X)\nonumber
\\
&\stackrel{(Q6)}{=}&
2\alpha_\varepsilon^*(X)\times
\alpha_\varepsilon^*(Y). \label{eq:alpha-star-wedge-cross}
\end{eqnarray}
We next show that $R_\varepsilon^{\top}$ preserves cross products to first order. 
For $\Omega\in\mathfrak{so}(3)$ and $p,q\in\mathbb R^3$, we have
\begin{equation}\label{eq:Omega-cross-derivation}
 \Omega(p\times q)=(\Omega p)\times q+p\times(\Omega q).
\end{equation}
Indeed, writing $\Omega x=\omega\times x$, both sides equal $p\langle\omega,q\rangle-q\langle\omega,p\rangle$. By bilinearity and~\eqref{eq:Omega-cross-derivation},
\[
 [(I-\varepsilon\Omega)p]\times[(I-\varepsilon\Omega)q]
 =(I-\varepsilon\Omega)(p\times q)+\varepsilon^2(\Omega p\times\Omega q).
\]
Since $R\in SO(3)$ and $R_\varepsilon^\top=(I-\varepsilon\Omega)R^\top$, it follows that $R_\varepsilon^\top p\times R_\varepsilon^\top q=R_\varepsilon^\top(p\times q)+\mathcal O(\varepsilon^2)$. Applying this identity to the definition~\eqref{eq:formal-alpha-star} gives
\begin{equation}\label{eq:alpha-star-cross-second}
 \alpha_\varepsilon^*(X)\times\alpha_\varepsilon^*(Y)
=R_\varepsilon^\top\bigl[\alpha_\varepsilon(J_\varepsilon X)
       \times\alpha_\varepsilon(J_\varepsilon Y)\bigr]\\
+\mathcal O(\varepsilon^2).
\end{equation}

We next compare $\alpha_\varepsilon(J_\varepsilon X)\times
\alpha_\varepsilon(J_\varepsilon Y)$ (which appears in~\eqref{eq:alpha-star-cross-second}) with
$\alpha_\varepsilon(X)\times\alpha_\varepsilon(Y)$. Taking determinants in~\eqref{eqJepsilon2} and using $\det J=1$ gives
\begin{equation}\label{eq:det-J-epsilon}
\det J_\varepsilon=1+\mathcal{O}(\varepsilon^2).
\end{equation}
Since $J_\varepsilon\colon T_p\Sigma\to T_p\Sigma$ is linear, we get
\begin{equation}\label{eq:J-preserves-alpha-cross}
\alpha_\varepsilon(J_\varepsilon X)
\times
\alpha_\varepsilon(J_\varepsilon Y)
=
(\det J_\varepsilon)\,	\alpha_\varepsilon(X)\times\alpha_\varepsilon(Y)
\stackrel{\eqref{eq:det-J-epsilon}}{=}
\alpha_\varepsilon(X)\times\alpha_\varepsilon(Y)	+\mathcal{O}(\varepsilon^2).
\end{equation}

Putting together \eqref{eq:alpha-star-wedge-cross},
\eqref{eq:alpha-star-cross-second} and~\eqref{eq:J-preserves-alpha-cross}, we get
\begin{equation}\label{eq:alpha-star-wedge}
\begin{split}
(\alpha_\varepsilon^*\wedge\alpha_\varepsilon^*)(X,Y)
&=
2R_\varepsilon^{\top}
\bigl(
\alpha_\varepsilon(X)\times\alpha_\varepsilon(Y)
\bigr)
+\mathcal{O}(\varepsilon^2).
\end{split}
\end{equation}

We can now verify the Maurer--Cartan equation for
$\alpha_\varepsilon^*$ up to second order:
\begin{align*}
d\alpha_\varepsilon^*(X,Y)
&\stackrel{\eqref{dalphaepsilon*}}{=}
R_\varepsilon^{\top}
\left(
d\beta_\varepsilon(X,Y)\right)
-
R_\varepsilon^{\top}
\left((\mathcal A_\varepsilon\wedge\beta_\varepsilon)(X,Y)\right)
+\mathcal{O}(\varepsilon^2)
\\
&\stackrel{\eqref{eq:d-beta-formal},\eqref{eq:A-beta-formal}}{=}
-2R_\varepsilon^{\top}
\bigl(
\alpha_\varepsilon(X)\times\alpha_\varepsilon(Y)
\bigr)
+\mathcal{O}(\varepsilon^2).
\end{align*}
Adding \eqref{eq:alpha-star-wedge}, we conclude that
\begin{equation}\label{eq:formal-MC-alpha-star}
d\alpha_\varepsilon^*
+
\alpha_\varepsilon^*\wedge\alpha_\varepsilon^*
=
\mathcal{O}(\varepsilon^2).
\end{equation}

With this Maurer--Cartan equation for $\alpha_\varepsilon^*$ up to order two, we are now ready to prove \eqref{eq:linearized-MC-dotalpha*}.
Expanding the left-hand side of
\eqref{eq:formal-MC-alpha-star} gives
\begin{eqnarray*}
d\alpha_\varepsilon^*
+
\alpha_\varepsilon^*\wedge\alpha_\varepsilon^*
&\stackrel{\eqref{alphaepsilon*}}{=}&
d(\alpha^*+\varepsilon\dot\alpha^*)+(\alpha^*+\varepsilon\dot\alpha^*)
\wedge (\alpha^*+\varepsilon\dot\alpha^*)+\mathcal{O}(\varepsilon^2)
\\
	&=&
	d\alpha^*+\alpha^*\wedge\alpha^*	+\varepsilon
	\left(
	d\dot\alpha^*
	+\alpha^*\wedge\dot\alpha^*
	+\dot\alpha^*\wedge\alpha^*
	\right)
	+\mathcal{O}(\varepsilon^2).
\end{eqnarray*}
The zeroth-order term of the last expression vanishes because $\alpha^*$ is the Maurer--Cartan form of $f^*$. \eqref{eq:formal-MC-alpha-star} implies that the coefficient of $\varepsilon$ must vanish, which proves
\eqref{eq:linearized-MC-dotalpha*} and completes the proof of item~2.

We next prove item~3. We start by analyzing the left-hand-side of the PDE in~\eqref{eq:q-linearized-system}: consider the spaces 
$\Omega^0(\Sigma,\Im (\mathbb{H}))=C^{\infty}(\Sigma,\Im (\mathbb{H}))$
and $\Omega^1(\Sigma,\Im (\mathbb{H}))$ of smooth 1-forms on $\Sigma$ taking values on $\Im (\mathbb{H})$ and define the operator
\begin{equation}\label{Dalpha*}
D_{\alpha^*}\colon
\Omega^0(\Sigma,\Im (\mathbb{H}))
\to
\Omega^1(\Sigma,\Im (\mathbb{H})),
\qquad
D_{\alpha^*}q=dq+\alpha^*\cdot q-q\cdot\alpha^*.
\end{equation}
This means that, for every $p\in\Sigma$ and $X\in T_p\Sigma$,
\[
(D_{\alpha^*}q)_p(X)
=
dq_p(X)
+\alpha_p^*(X)\cdot q(p)
-q(p)\cdot\alpha_p^*(X)=X(q)+[\alpha^*(X),q].
\]
(Recall that the commutator of two imaginary quaternions is imaginary).

For each smooth vector field $X$ on $\Sigma$, evaluation of $D_{\alpha^*}$ in the $X$-direction gives a degree-zero operator on sections:
\begin{equation}\label{Dalpha*X}
D_{\alpha^*,X}\colon \Omega^0(\Sigma,\Im (\mathbb{H}))
\to
\Omega^0(\Sigma,\Im (\mathbb{H})),
\qquad
D_{\alpha^*,X}q
:=X(q)+[\alpha^*(X),q].
\end{equation}
$D_{\alpha^*,X}$ satisfies the Leibniz rule $D_{\alpha^*,X}(hq)
=
X(h)q+hD_{\alpha^*,X}q$,  for all $h\in C^\infty(\Sigma,\mathbb R)$.

The curvature of $D_{\alpha^*}$ acts on a section $q$ of the trivial bundle $\Sigma \times \Im (\mathbb{H})$ by
\begin{equation}\label{curvDalpha*}
R^{D_{\alpha^*}}(X,Y)q
:=
D_{\alpha^*,X}D_{\alpha^*,Y}q-
D_{\alpha^*,Y}D_{\alpha^*,X}q-
D_{\alpha^*,[X,Y]}q.
\end{equation}
\par\vspace{.2cm}\noindent
{\bf Claim 1.} {\em $D_{\alpha^*}$ is flat, i.e., $R^{D_{\alpha^*}}=0$.}
\par\vspace{.2cm}\noindent
{\em Proof of Claim 1.}
Using~\eqref{Dalpha*X} twice, we have
\[
\begin{aligned}
D_{\alpha^*,X}D_{\alpha^*,Y}q
&=X(Y(q))+[X(\alpha^*(Y)),q]+[\alpha^*(Y),X(q)]\\
&\qquad+[\alpha^*(X),Y(q)]+[\alpha^*(X),[\alpha^*(Y),q]].
\end{aligned}
\]
Analogously,
\[
\begin{aligned}
D_{\alpha^*,Y}D_{\alpha^*,X}q
&=Y(X(q))+[Y(\alpha^*(X)),q]+[\alpha^*(X),Y(q)]\\
&\qquad+[\alpha^*(Y),X(q)]+[\alpha^*(Y),[\alpha^*(X),q]].
\end{aligned}
\]
On subtracting the last two expressions, the terms $[\alpha^*(Y),X(q)],
[\alpha^*(X),Y(q)]$ cancel.
The Jacobi identity for the commutator yields
\[
[\alpha^*(X),[\alpha^*(Y),q]]-[\alpha^*(Y),[\alpha^*(X),q]]
=[[\alpha^*(X),\alpha^*(Y)],q].
\]
Thus,
\begin{eqnarray*}
D_{\alpha^*,X}D_{\alpha^*,Y}q
-
D_{\alpha^*,Y}D_{\alpha^*,X}q
&=&[X,Y](q)
\\
& & \quad +[X(\alpha^*(Y)),q]-[Y(\alpha^*(X)),q]
\\
& & 
\qquad +[[\alpha^*(X),\alpha^*(Y)],q].
\end{eqnarray*}

Moreover,
\[
D_{\alpha^*,[X,Y]}q
=[X,Y](q)+[\alpha^*([X,Y]),q].
\]
Consequently,
\begin{eqnarray*}
R^{D_{\alpha^*}}(X,Y)q
&=&
\bigl[
X\bigl(\alpha^*(Y)\bigr)
-Y\bigl(\alpha^*(X)\bigr)
+[\alpha^*(X),\alpha^*(Y)]
-\alpha^*([X,Y]),\ q
\bigr]
\\
&=&
[d\alpha^*(X,Y)+[\alpha^*(X),\alpha^*(Y)],\ q]
\\
&=&
[d\alpha^*(X,Y)+\alpha^*(X)\cdot \alpha^*(Y)-\alpha^*(Y)\cdot \alpha^*(X),\ q]
\\
&=&
\bigl[
(d\alpha^*+\alpha^*\wedge\alpha^*)(X,Y),q
\bigr].
\end{eqnarray*}

Since $\alpha^*=(f^*)^{-1}df^*$ satisfies the Maurer--Cartan
equation $d\alpha^*+\alpha^*\wedge\alpha^*=0$, we deduce that 
$R^{D_{\alpha^*}}=0$, and Claim 1 is proved.

Recall that the PDE that we want to solve in~\eqref{eq:q-linearized-system} is 
\begin{equation}\label{PDEq}
D_{\alpha^*}q=\dot\alpha^* \quad \mbox{on $\Sigma$.}
\end{equation}
We next deduce the compatibility condition for~\eqref{PDEq}. To do so,
first we extend $D_{\alpha^*}$ to $\Im (\mathbb{H})$-valued one-forms by the usual procedure:
\[
(D_{\alpha^*}\eta)(X,Y)
=
D_{\alpha^*,X}\bigl(\eta(Y)\bigr)
-
D_{\alpha^*,Y}\bigl(\eta(X)\bigr)
-
\eta([X,Y]), 
\qquad \eta \in \Omega^1(\Sigma,\Im (\mathbb{H})).
\]
Now take $\eta=D_{\alpha^*}q$. Since $
(D_{\alpha^*}q)(Y)=D_{\alpha^*,Y}q$,
we have
\[
(D_{\alpha^*}^{\,2}q)(X,Y)
=
D_{\alpha^*,X}D_{\alpha^*,Y}q
-
D_{\alpha^*,Y}D_{\alpha^*,X}q
-
D_{\alpha^*,[X,Y]}q\\
\stackrel{\eqref{curvDalpha*}}{=}
R^{D_{\alpha^*}}(X,Y)q.
\]
As $D_{\alpha^*}$ is flat, the last expression vanishes. This implies
that the compatibility condition for~\eqref{PDEq} is precisely 
\begin{equation}\label{compatibilityq}
D_{\alpha^*}\dot\alpha^*=0.
\end{equation}
\par\vspace{.2cm}\noindent
{\bf Claim 2.} {\em The last equation is just~\eqref{eq:linearized-MC-dotalpha*}. }
\par\vspace{.2cm}\noindent
{\em Proof of Claim 2.}
Given $\eta\in\Omega^1(\Sigma,\Im (\mathbb{H}))$ and smooth vector fields $X,Y$ on $\Sigma$, \eqref{Dalpha*X} implies
\[
D_{\alpha^*,X}\bigl(\eta(Y)\bigr)
=
X\bigl(\eta(Y)\bigr)
+\alpha^*(X)\cdot\eta(Y)
-\eta(Y)\cdot\alpha^*(X).
\]
Interchanging $X$ and $Y$ and subtracting $\eta([X,Y])$, we obtain
\begin{eqnarray*}
(D_{\alpha^*}\eta)(X,Y)
&=&
D_{\alpha^*,X}\bigl(\eta(Y)\bigr)
-D_{\alpha^*,Y}\bigl(\eta(X)\bigr)
-\eta([X,Y])
\\
&=&
X\bigl(\eta(Y)\bigr)-Y\bigl(\eta(X)\bigr)-\eta([X,Y])
\\
& &\quad  +\alpha^*(X)\cdot\eta(Y)-\alpha^*(Y)\cdot\eta(X)
\\
& & 
\qquad +\eta(X)\cdot\alpha^*(Y)-\eta(Y)\cdot\alpha^*(X)
\\
&=&
d\eta(X,Y)+(\alpha^*\wedge\eta)(X,Y)+(\eta\wedge\alpha^*)(X,Y)
\\
&=&
\bigl(d\eta+\alpha^*\wedge\eta+\eta\wedge\alpha^*\bigr)(X,Y),
\end{eqnarray*}
or equivalently,
\begin{equation*}
D_{\alpha^*}\eta
=
d\eta+\alpha^*\wedge\eta+\eta\wedge\alpha^*.
\end{equation*}
From here we deduce that \eqref{compatibilityq} is precisely~\eqref{eq:linearized-MC-dotalpha*}, which proves Claim 2.

Since $\Sigma$ is simply connected, the Frobenius theorem applied to the PDE system~\eqref{PDEq} gives a
unique global solution of \eqref{eq:q-linearized-system} for every
prescribed initial value $q(p_0)=q_0\in\Im (\mathbb{H})$.

Equivalently, global existence can be seen directly by trivializing
the connection. Define the imaginary-quaternion-valued one-form
$ \zeta:=f^*\cdot\dot\alpha^*\cdot(f^*)^{-1}$.
The product rule and~\eqref{eq:linearized-MC-dotalpha*} give $d\zeta=0$ and $d\bigl(f^*\cdot q\cdot(f^*)^{-1}\bigr)
 =f^*\cdot D_{\alpha^*}q\cdot(f^*)^{-1}$. Thus the unique solution is
\begin{equation*}
 q(p)=(f^*(p))^{-1}\cdot
 \left[
 f^*(p_0)\cdot q_0\cdot(f^*(p_0))^{-1}
 +\int_{p_0}^{p}\zeta
 \right]\cdot f^*(p).
\end{equation*}
The integral is independent of the path because $\Sigma$ is simply
connected. This also shows that no completeness or growth hypothesis
is required for the global solution.

Since both $\alpha^*$ and $\dot\alpha^*$ are imaginary-valued, taking real parts in \eqref{eq:q-linearized-system} gives
\[
0=\Re (\dot\alpha^*)
=\Re(dq+\alpha^*\cdot q-q\cdot \alpha^*)
=\Re(dq)=d(\Re(q)),
\]
hence $\Re(q)$ is constant. Since $q_0\in \Im (\mathbb{H})$, it follows that
$q\in C^{\infty}(\Sigma, \Im (\mathbb{H}))$ and hence, defining $X^*:=f^*\cdot q $, 
we produce a smooth tangent vector field to $\mathbb S^3$ along $f^*$.

To finish the proof, it remains to prove that
$u^*:=\langle X^*,N^*\rangle$ is Jacobi. To do this, we introduce the smooth, first-order deformation of $f^*$
\begin{equation}\label{fvarepsilon*}
f_\varepsilon^*:=f^*\cdot \left(\cos\bigl(\varepsilon |q(p)|\bigr)\,{\bf e}
+
\frac{\sin\bigl(\varepsilon |q(p)|\bigr)}{|q(p)|}\,q(p)\right)\colon \Sigma \to \mathbb{S}^3.
\end{equation}
At $q(p)=0$, the factor $\sin(\varepsilon|q(p)|)/|q(p)|$ is interpreted by its smooth extension, with value $\varepsilon$; equivalently, 
$f^*_\varepsilon=f^*\exp(\varepsilon q)$.
Clearly, the variational field of $\varepsilon \mapsto f^*_{\varepsilon}$ at $\varepsilon=0$ is $X^*=f^*\cdot q$. 

If $\Sigma$ is compact, the fact that $f^*$ is an immersion ensures that we can choose $\varepsilon_0>0$ so that for $|\varepsilon|<
\varepsilon_0$, $f_{\varepsilon}^*$ is an immersion. If $\Sigma$ is noncompact, the existence of such a global $\varepsilon_0$ is not guaranteed. Nevertheless, to conclude that $u^*$ is Jacobi, we do not need a global $\varepsilon_0$: around every point $p\in\Sigma$ we can choose a neighborhood $U$ and an $\varepsilon_0(U)>0$ such that $f_\varepsilon^*|_U$ is an immersion whenever $|\varepsilon|<\varepsilon_0(U)$. Since the Jacobi equation is local, this suffices for the argument below. Thus, in what follows, we assume
that $f_\varepsilon^*$ is an immersion of $\Sigma$ into $\mathbb{S}^3$.
Denote its left Maurer--Cartan form by
\begin{equation}\label{MCfepsilon*}
\theta_\varepsilon:=(f_\varepsilon^*)^{-1}\cdot df_\varepsilon^*.
\end{equation}
We next expand $\theta_{\varepsilon}$ up to order two around $\varepsilon=0$. To do this, denote by $h_{\varepsilon}$ the factor that
accompanies $f^*$ in equation~\eqref{fvarepsilon*}. Hence,
\[
h_\varepsilon
={\bf e}+\varepsilon q+\mathcal O(\varepsilon^2),
\qquad 
h_\varepsilon^{-1}
={\bf e}-\varepsilon q+\mathcal O(\varepsilon^2),
\qquad 
dh_\varepsilon
=\varepsilon\,dq+\mathcal O(\varepsilon^2).
\]
Since
$f_\varepsilon^*=f^*h_\varepsilon$, the product rule gives
$d(f^*h_\varepsilon)
=df^*\cdot h_\varepsilon+f^*\cdot dh_\varepsilon$, and 
\begin{eqnarray}
\theta_\varepsilon
&=&
h_\varepsilon^{-1}\cdot (f^*)^{-1}\cdot 
\bigl(df^*\cdot h_\varepsilon+f^*\cdot dh_\varepsilon\bigr)
=h_\varepsilon^{-1}\cdot\alpha^*\cdot h_\varepsilon
+h_\varepsilon^{-1}\cdot dh_\varepsilon \nonumber
\\
&=&
({\bf e}-\varepsilon q)\cdot \alpha^*\cdot ({\bf e}+\varepsilon q)
+({\bf e}-\varepsilon q)\cdot \varepsilon\,dq 
+\mathcal O(\varepsilon^2)   \nonumber
\\
&=&
\alpha^*
+\varepsilon\bigl(\alpha^*\cdot q-q\cdot\alpha^*+dq\bigr)
+\mathcal O(\varepsilon^2)  \nonumber
\\
&\stackrel{\eqref{Dalpha*}}{=}&
\alpha^*
+\varepsilon\, D_{\alpha^*}q
+\mathcal O(\varepsilon^2)
\stackrel{\eqref{PDEq}}{=}
\alpha^*
+\varepsilon\, \dot\alpha^*
+\mathcal O(\varepsilon^2)  \nonumber
\\
&\stackrel{\eqref{eq:formal-alpha-star},\eqref{alphaepsilon*}}{=}&
R_\varepsilon^{\top}\alpha_\varepsilon\circ J_\varepsilon
+\mathcal O(\varepsilon^2). \label{thetavarepsilon}
\end{eqnarray}
In particular,
\begin{eqnarray*}
\langle\theta_\varepsilon(X),\theta_\varepsilon(Y)\rangle
&=&
\langle
R_\varepsilon^{\top}(\alpha_\varepsilon\circ J_\varepsilon)(X),
R_\varepsilon^{\top}(\alpha_\varepsilon\circ J_\varepsilon)(Y) 
\rangle
+\mathcal O(\varepsilon^2)
\\
&=&
\langle
R_\varepsilon R_\varepsilon^{\top}(\alpha_\varepsilon\circ J_\varepsilon)(X),
(\alpha_\varepsilon\circ J_\varepsilon)(Y) 
\rangle
+\mathcal O(\varepsilon^2)
\\
&\stackrel{\eqref{eq:R-epsilon-orthogonal}}{=}&
\langle
(\alpha_\varepsilon\circ J_\varepsilon)(X),
(\alpha_\varepsilon\circ J_\varepsilon )(Y)
\rangle
+\mathcal O(\varepsilon^2)
\\
&\stackrel{\eqref{eq:Jepsilon-compatible},\eqref{eq:alphaepsilon-isometric}}{=}&
g_\varepsilon (X,Y)
+\mathcal O(\varepsilon^2).
\end{eqnarray*}

We now compare the first-order geometry of the (non-minimal) normal variation $f_s$ with the geometry of the immersion $f_\varepsilon^*$ given by~\eqref{fvarepsilon*}. Throughout
this comparison, the subscript $\varepsilon$ without an
asterisk denotes a first jet of the variation $f_s$, whereas
$f_\varepsilon^*$ and its geometric objects are actual
immersions and objects associated with them.

Recall that $N_s$ is the unit normal of $f_s$ and that
$N_1=f^{-1}\cdot N$. Define the left-trivialized unit normal
of $f_s$ and its first jet by
\begin{equation}\label{N1sN1varepsilon}
N_{1,s}:=f_s^{-1}\cdot N_s,
\qquad
N_{1,\varepsilon}
:=N_1+\varepsilon\dot N_1,
\qquad
\mbox{where }\ \dot N_1
:=\left.\frac{d}{ds}\right|_{s=0}N_{1,s}.
\end{equation}
In particular, $N_{1,\varepsilon}$ is not asserted to be an
exact unit normal for any immersion. Differentiating at $s=0$
the identity $|N_{1,s}|^2=1$ gives $\langle N_1,\dot{N}_1\rangle=0$. Thus,  
\begin{equation}\label{eq:N1-first-jet}
|N_{1,\varepsilon}|^2=1+2\varepsilon \langle N_1,\dot{N}_1\rangle
+\mathcal{O}(\varepsilon^2)=1+\mathcal O(\varepsilon^2), 
\end{equation}
Analogously, differentiating  $\langle N_{1,s},\alpha_s(X)\rangle=0$ 
at $s=0$ (here $X\in T\Sigma$ is arbitrary) we have $
\langle \dot{N}_1,\alpha(X)\rangle + \langle N_1,\dot{\alpha}(X)\rangle=0$. Thus,  
\begin{eqnarray}
\langle N_{1,\varepsilon},\alpha_\varepsilon(X)\rangle
&\stackrel{\eqref{alphaJAepsilon}}{=}&
\langle N_1+\varepsilon\dot N_1,(\alpha+\varepsilon\dot\alpha)(X)\rangle \nonumber
\\
&=&
\langle N_1,\alpha(X)\rangle
+\varepsilon \bigl(
\langle N_1,\dot\alpha(X)\rangle
+ \langle \dot N_1,\alpha(X)\rangle 
\bigr)
+ \mathcal O(\varepsilon^2)
=\mathcal O(\varepsilon^2).\qquad \label{eq:N1-first-jet1}
\end{eqnarray}

Let $N_\varepsilon^*$ be the unit normal of  the immersion $f_\varepsilon^*$, chosen with the orientation for which
$N_0^*=N^*$, and set
\(
N_{1,\varepsilon}^*
:=(f_\varepsilon^*)^{-1}\cdot N_\varepsilon^*;
\)
this is a unit normal to the plane $\theta_\varepsilon(T\Sigma)$ in
$\Im (\mathbb{H})$. 
\par\vspace{.2cm}\noindent
{\bf Claim 3.} {\em In the situation above, it holds (compare to~\eqref{eq152}):
	\begin{equation}\label{eq:N1-star-comparison}
		N_{1,\varepsilon}^*
		=
		R_{\varepsilon}^{\top}N_{1,\varepsilon}
		+\mathcal{O}(\varepsilon^2).  
	\end{equation}
}
\par\vspace{.2cm}\noindent
{\em Proof of Claim 3.}
By~\eqref{eq:R-epsilon-orthogonal}, $R_\varepsilon^{\top}R_\varepsilon
=I+\mathcal O(\varepsilon^2)$. This property and~\eqref{eq:N1-first-jet} imply that 
\begin{equation}\label{RepsilonTN1unitary}
|R_\varepsilon^{\top}N_{1,\varepsilon}|=1+\mathcal O(\varepsilon^2).
\end{equation}
Furthermore,
\begin{eqnarray*}
\bigl\langle
R_\varepsilon^{\top}N_{1,\varepsilon},
\theta_\varepsilon(X)
\bigr\rangle
&\stackrel{\eqref{thetavarepsilon}}{=}&
\bigl\langle
R_\varepsilon^{\top}N_{1,\varepsilon},
R_\varepsilon^{\top}
(\alpha_\varepsilon\circ J_\varepsilon)(X)
\bigr\rangle
+\mathcal O(\varepsilon^2)
\\
&\stackrel{\eqref{eq:R-epsilon-orthogonal}}{=}&
\bigl\langle
N_{1,\varepsilon},
(\alpha_\varepsilon\circ J_\varepsilon)(X)
\bigr\rangle
+\mathcal O(\varepsilon^2)
\stackrel{\eqref{eq:N1-first-jet1}}{=}
\mathcal O(\varepsilon^2).
\end{eqnarray*}
By definition, $N_{1,\varepsilon}^*$ is unit and orthogonal to the plane $\theta_\varepsilon(T\Sigma)$ inside $\Im (\mathbb{H})$. \eqref{RepsilonTN1unitary} and the last displayed equality show that the same properties hold for $R_\varepsilon^{\top}N_{1,\varepsilon}$ up to second order. Since the codimension of  $\theta_\varepsilon(T\Sigma)$ in $\Im (\mathbb{H})$ is one, we deduce that $N_{1,\varepsilon}^*$ and $R_\varepsilon^{\top}N_{1,\varepsilon}$ coincide up to second order and up to sign. In fact, the sign relating
$N_{1,\varepsilon}^*,R_\varepsilon^{\top}N_{1,\varepsilon}$ must be positive since at $\varepsilon=0$ we have the equality $N_1^*=R^{\top}N_1$ by~\eqref{eq152}. This proves
\eqref{eq:N1-star-comparison} and completes the proof of Claim 3.

Write $A_s$ for the shape operator of $f_s$, with
respect to $N_s$, and put
\[
A_\varepsilon:=A+\varepsilon\dot A,
\qquad
A:=A_0,
\qquad
\dot A:=\left.\frac{d}{ds}\right|_{s=0}A_s.
\]
Thus $A_\varepsilon$, like $N_{1,\varepsilon}$,
is a first jet of the original variation. By contrast,
let $A_\varepsilon^*$ denote the actual shape
operator of $f_\varepsilon^*$, with respect to
$N_\varepsilon^*$.
\par\vspace{.2cm}\noindent
{\bf Claim 4.} {\em 
The left-trivialized Weingarten equations for the immersions $f_s$ and $f_{\varepsilon}^*$ are respectively:
\begin{eqnarray}
dN_{1,s}(X)+\alpha_s(X)\times N_{1,s}
&=&
-\alpha_s(A_sX), \label{eq:Weingarten-fs}
\\
dN_{1,\varepsilon}^*(X)
+\theta_\varepsilon(X)\times N_{1,\varepsilon}^*
&=&
-\theta_\varepsilon(A_\varepsilon^*X).  \label{eq:Weingarten-actual-star}
\end{eqnarray}

}
\par\vspace{.2cm}\noindent
{\em Proof of Claim 4.} We will only prove~\eqref{eq:Weingarten-fs}, as the proof of~\eqref{eq:Weingarten-actual-star} is completely analogous.
Fix $s$ and $X\in T_p\Sigma$. Recall that $\overline{\nabla}$ denotes the standard connection of
$\mathbb R^4\equiv \mathbb H$. 
\begin{equation}\label{eq:left-trivialized-normal-derivative}
f_s^{-1}\cdot \overline{\nabla}_{df_s(X)}N_s
\stackrel{\eqref{N1sN1varepsilon}}{=}
f_s^{-1}\cdot \overline{\nabla}_{df_s(X)}(f_s\cdot N_{1,s})
=
\alpha_s(X)\cdot N_{1,s}+dN_{1,s}(X).
\end{equation}
Differentiating $\langle N_s,f_s\rangle=0$ in the direction of $df_s(X)$ gives 
\[
\langle \overline{\nabla}_{df_s(X)}N_s,f_s\rangle
=-\langle N_s,df_s(X)\rangle=0,
\]
so $\overline{\nabla}_{df_s(X)}N_s$ is already tangent to $\mathbb S^3$. Thus, the Weingarten equation of $f_s$ in $\mathbb{S}^3$ yields
\begin{equation}\label{eq110}
f_s^{-1}\cdot \overline{\nabla}_{df_s(X)}N_s
=
-f_s^{-1}\cdot df_s(A_sX)
=
-\alpha_s(A_sX).
\end{equation}
Since $\alpha_s(X)$ and $N_{1,s}$ are orthogonal
imaginary quaternions, (Q2) gives $\alpha_s(X)\cdot N_{1,s}=
\alpha_s(X)\times N_{1,s}$. Substituting this and~\eqref{eq110} into
\eqref{eq:left-trivialized-normal-derivative} yields~\eqref{eq:Weingarten-fs}. This proves Claim 4.
\par\vspace{.2cm}
Taking the first-order expansion of~\eqref{eq:Weingarten-fs} gives
\begin{equation}\label{eq:Weingarten-first-jet}
	dN_{1,\varepsilon}(X)
	+\alpha_\varepsilon(X)\times N_{1,\varepsilon}
	=
	-\alpha_\varepsilon(A_\varepsilon X)
	+\mathcal O(\varepsilon^2).
\end{equation}

We next compute the left-hand side of
\eqref{eq:Weingarten-actual-star}:
\begin{eqnarray}
dN_{1,\varepsilon}^*(X)
+\theta_\varepsilon(X)\times N_{1,\varepsilon}^*
&\stackrel{\eqref{eq:N1-star-comparison}}{=}&
d(R_{\varepsilon}^{\top}N_{1,\varepsilon})(X)
+\theta_\varepsilon(X)\times N_{1,\varepsilon}^*
+\mathcal{O}(\varepsilon^2) \nonumber
\\
&=&
dR_{\varepsilon}^{\top}(X) N_{1,\varepsilon}
+R_{\varepsilon}^{\top}dN_{1,\varepsilon}(X)
+\theta_\varepsilon(X)\times N_{1,\varepsilon}^*
+\mathcal{O}(\varepsilon^2)  \nonumber
\\
&\stackrel{\eqref{eq:formal-R},\eqref{thetavarepsilon}}{=}&
R_\varepsilon^{\top}
		\Bigl(
		-\mathcal A_\varepsilon(X)N_{1,\varepsilon}
		+dN_{1,\varepsilon}(X)
		+\alpha_\varepsilon(J_\varepsilon X)
		\times N_{1,\varepsilon}
		\Bigr) \nonumber
\\
& & \qquad +\mathcal O(\varepsilon^2) \label{eq112}
\end{eqnarray}
Observe that
\begin{eqnarray*}
\mathcal A_\varepsilon(X)N_{1,\varepsilon}
&\stackrel{\eqref{eq75}}{=}&
	\phi\bigl(
	\alpha_\varepsilon(J_\varepsilon X)
	-\alpha_\varepsilon(X)
	\bigr)N_{1,\varepsilon}
	+\mathcal O(\varepsilon^2)\\
	&=&
	\bigl(
	\alpha_\varepsilon(J_\varepsilon X)
	-\alpha_\varepsilon(X)
	\bigr)\times N_{1,\varepsilon}
	+\mathcal O(\varepsilon^2),
\end{eqnarray*}
where in the last equality we have used that  $\phi(v)w=v\times w$. Consequently,
\begin{equation}\label{eq113}
-\mathcal A_\varepsilon(X)N_{1,\varepsilon}
 +\alpha_\varepsilon(J_\varepsilon X)\times N_{1,\varepsilon} =\alpha_\varepsilon(X)\times N_{1,\varepsilon}
                +\mathcal O(\varepsilon^2).
\end{equation}
Plugging~\eqref{eq113} into~\eqref{eq112} we get that the left-hand side of \eqref{eq:Weingarten-actual-star} is
\begin{eqnarray}
dN_{1,\varepsilon}^*(X)
+\theta_\varepsilon(X)\times N_{1,\varepsilon}^*
&=&
R_\varepsilon^{\top}
\bigl(
dN_{1,\varepsilon}(X)
+\alpha_\varepsilon(X)\times N_{1,\varepsilon}
\bigr)
+\mathcal O(\varepsilon^2)  \nonumber
\\
&\stackrel{\eqref{eq:Weingarten-first-jet}}{=}&
-R_\varepsilon^{\top}
\alpha_\varepsilon(A_\varepsilon X)
+\mathcal O(\varepsilon^2).  \label{eq114}
\end{eqnarray}
In turn, the right-hand side of~\eqref{eq:Weingarten-actual-star} is
\begin{equation}\label{eq115}
-\theta_\varepsilon(A_\varepsilon^*X)
\stackrel{\eqref{thetavarepsilon}}{=}
-R_\varepsilon^{\top}
\alpha_\varepsilon(J_\varepsilon A_\varepsilon^*X)
+\mathcal O(\varepsilon^2).
\end{equation}
Equating~\eqref{eq114} and~\eqref{eq115} and using that
$\alpha_\varepsilon$ is injective, we obtain
\(
J_\varepsilon A_\varepsilon^*
=
A_\varepsilon+\mathcal O(\varepsilon^2).
\)
Using~\eqref{eqJepsilon2}, we can rewrite the last equation as
\begin{equation}\label{eq:shape-comparison-star}
A_\varepsilon^*
=
-J_\varepsilon A_\varepsilon
+\mathcal O(\varepsilon^2).
\end{equation}

For every $s$, the complex structure $J_s$ is skew-adjoint
and the shape operator $A_s$ is self-adjoint with respect to
$g_s$. Denoting the $g_s$-adjoint endomorphism by ${}^\dagger$, we get
\[
\mbox{trace}(J_sA_s)=\mbox{trace}\bigl((J_sA_s)^\dagger\bigr)
=\mbox{trace}(A_s^\dagger J_s^\dagger)\\
=-\mbox{trace}(A_sJ_s)
=-\mbox{trace}(J_sA_s).
\]
Thus, 
\begin{equation}\label{eq117}
\mbox{trace}(J_sA_s)=0 \qquad \mbox{for every $s$.} 
\end{equation}

Next we derive the analogous property for
$\mbox{trace}(J_\varepsilon A_\varepsilon)$.
Since $J_\varepsilon=J+\varepsilon\dot J$ and $A_\varepsilon=A+\varepsilon\dot A$,
expanding their product and taking traces gives
\begin{equation}\label{eq118}
	\mbox{trace}(J_\varepsilon A_\varepsilon)
	=\mbox{trace}(JA)
	+\varepsilon\, \mbox{trace}(\dot J A+J\dot A)
	+\varepsilon^2\, \mbox{trace}(\dot J\dot A)\\
	=\mathcal O(\varepsilon^2),
\end{equation}
where in the last equality the constant term vanishes by~\eqref{eq117} at $s=0$, whereas the coefficient of $\varepsilon$ is
\[
\left.\frac{d}{ds}\right|_{s=0}
\mbox{trace}(J_sA_s)\stackrel{\eqref{eq117}}{=}0.
\]

Finally, taking the trace in~\eqref{eq:shape-comparison-star}
yields
\[
2H_\varepsilon^*
=\mbox{trace}(A_\varepsilon^*)
=-\mbox{trace}(J_\varepsilon A_\varepsilon)
+\mathcal O(\varepsilon^2)
\stackrel{\eqref{eq118}}{=}
\mathcal O(\varepsilon^2).
\]
Unlike $H_\varepsilon=H+\varepsilon\dot H$,
the function $H_\varepsilon^*$ is the actual mean curvature of the immersion $f_\varepsilon^*$. In particular,
\[
\left.\frac{d}{d\varepsilon}\right|_{\varepsilon=0}
H_\varepsilon^*=0.
\]

Finally, recall that the variational field of $f_\varepsilon^*$
is $X^*=f^*\cdot q$. Decompose it as
\(
X^*=df^*(Y)+u^*N^*.
\)
Because $f^*$ is minimal, the tangential component
contributes $Y(H^*)=0$ to the first variation of
mean curvature. With the convention for the Jacobi
operator used in \eqref{eq:minimal-to-first-order},
the normal variation formula gives
\[
0
=
2\left.\frac{d}{d\varepsilon}\right|_{\varepsilon=0}
H_\varepsilon^*
=
\mathcal L_{f^*}u^*,
\]
hence $u^*$ is a Jacobi function. Now the proof is complete.
\end{proof}

\begin{definition}\label{def:Jacobi-conjugate}
{\rm 
Given a Jacobi function $u\colon\Sigma\to\mathbb R$ for $f$, choose the
initial values $\Omega(p_0)\in\mathfrak{so}(3)$ for~\eqref{eq:Omega-IVP}
and $q(p_0)\in\Im (\mathbb{H})$ for~\eqref{eq:q-linearized-system}, and let $X^*$ be the vector field constructed in Theorem~\ref{propos8.9}. The Jacobi function 
\(
u^*=\langle X^*,N^*\rangle
\)
is called a {\em Jacobi conjugate} of $u$. Notice also that $\mathcal L_{f^*}=\mathcal L_f$ by~\eqref{eq146'}.
}
\end{definition}

The dependence on the two initial values is described explicitly in
Lemma~\ref{lem:initial-data-gauge} below; before stating and proving 
the lemma, we will need some preliminaries. Given 
$a,b\in \Im (\mathbb{H})$, the formula 
\begin{equation}\label{Kab}
K_{a,b}(x):=a\cdot x+x\cdot b,
\end{equation}
defines a smooth vector field on $\mathbb{S}^3$. $K_{a,b}$ is the infinitesimal generator of the one-parameter
group of ambient isometries $Q_t^{a,b}(x)
:=\exp(ta)\cdot x\cdot\exp(tb)$, $x\in \mathbb{S}^3$, where $\exp$ denotes the exponential map for the round metric in $\mathbb{S}^3$. Therefore, $K_{a,b}$ is a Killing field on $\mathbb{S}^3$.

Conversely, let $L_a(x)=a\cdot x$ and $R_b(x)=x\cdot b$ denote
the corresponding skew-symmetric linear maps of $\mathbb R^4\simeq\mathbb H$.
Their restrictions to $\mathbb S^3$ are infinitesimal generators of
isometries of the round metric. As a vector-space decomposition, one has
\[
\mathfrak{so}(4)
=
\{L_a+R_b:a,b\in\Im (\mathbb{H})\}
\simeq
\Im (\mathbb{H})\oplus\Im (\mathbb{H}).
\]
Thus, every Killing field of $\mathbb S^3$ is uniquely of the form~\eqref{Kab} for some $a,b\in \Im (\mathbb{H})$. We also have a standard isomorphism of Lie algebras
\[
\Phi \colon \Im (\mathbb{H})\oplus\Im (\mathbb{H})
\to \mathfrak{so}(4),\qquad \Phi(a,b)=\bigl(x\mapsto a\cdot x-x\cdot b\bigr);
\]
hence $K_{a,b}$ corresponds to the pair $(a,-b)$ under $\Phi$.

\begin{definition}\label{def6.6}
{\em 
Let $f\colon\Sigma\to\mathbb S^3$ be an oriented minimal immersion
with unit normal $N$. A function $u\colon\Sigma\to\mathbb R$ is
called a \emph{Jacobi--Killing function} if there exist $a,b\in\Im (\mathbb{H})$ such that
\[
u=u_{a,b}:=
\left\langle K_{a,b}\circ f,N\right\rangle
=
\left\langle
a\cdot f+f\cdot b,N
\right\rangle.
\]
Equivalently, a Jacobi--Killing function is the normal component along
$f$ of an ambient Killing field of $\mathbb S^3$. Since the immersions
$f_t:=Q_t^{a,b}\circ f$ are minimal whenever $f$ is minimal, the first variation of their mean curvature gives $\mathcal L_f u_{a,b}=0$. Thus, every Jacobi--Killing function is a Jacobi function.
}
\end{definition}

\begin{lemma}[Dependence on the initial conditions]
\label{lem:initial-data-gauge}
Fix a Jacobi function $u$ for $f$. Let $u^*$ and
$\widehat u^*$ be the Jacobi conjugates obtained from the initial
data $(\Omega_0,q_0),(\widehat\Omega_0,\widehat q_0)
\in \mathfrak{so}(3)\times\Im (\mathbb{H})$, respectively. Set
\begin{equation}\label{eq:left-Killing-initial-vector}
\begin{split}
 a&:=f^*(p_0)\cdot(\widehat q_0-q_0-b)\cdot f^*(p_0)^{-1}
      \in\Im(\mathbb H), \\
 b&:=\tfrac12\phi^{-1}(C)\in\Im(\mathbb H), \\
 C&:=\widehat\Omega_0-\Omega_0\in\mathfrak{so}(3).
\end{split}
\end{equation}
Then,
\begin{eqnarray}
\widehat q-q
&=&
(f^*)^{-1}\cdot a\cdot f^*+b,  \label{eq:q-initial-data-change}
\\
\widehat u^*-u^*
&=&
\left\langle
a\cdot f^*+f^*\cdot b,N^*
\right\rangle.  \label{eq:u-star-initial-data-change}
\end{eqnarray}
In particular, changing the initial conditions adds to $u^*$ the
normal component of the ambient Killing field
\[
K_{a,b}(x):=a\cdot x+x\cdot b,
\qquad x\in\mathbb S^3.
\]
More explicitly:
\begin{enumerate}[(a)]
\item If only $q_0$ is changed, then $b=0$ and
$a=f^*(p_0)\cdot(\widehat q_0-q_0)\cdot f^*(p_0)^{-1}$; in this case,
the added Killing field is $x\in \mathbb{S}^3\mapsto K_{a,0}(x)=a\cdot x$. 
\item If only $\Omega_0$ is changed while $q_0$ is kept fixed, then
$b=\frac12\phi^{-1}(\widehat\Omega_0-\Omega_0)$, $a=-f^*(p_0)\cdot b\cdot f^*(p_0)^{-1}$ and the corresponding Killing field $K_{a,b}$ vanishes at $f^*(p_0)$.
\end{enumerate}
Thus the Jacobi conjugate of $u$ is naturally defined modulo
Jacobi--Killing functions.
\end{lemma}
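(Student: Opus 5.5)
The proof reduces to uniqueness for the two linear initial value problems of Theorem~\ref{propos8.9}. First we compare $\Omega$ and $\widehat\Omega$, then we find an explicit solution of the linear equation satisfied by $\widehat q-q$, and finally we pass to normal components.

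\emph{Step 1: $\widehat\Omega-\Omega$ is constant.} Both $\Omega$ and $\widehat\Omega$ solve $d\Omega=R^{\top}\dot{\mathcal A}R$ with the same right-hand side, since $R$ and $\dot{\mathcal A}$ depend only on $f,f^*,u$ and not on the initial data. Hence $d(\widehat\Omega-\Omega)=0$. As $\Sigma$ is connected, $\widehat\Omega-\Omega\equiv C=\widehat\Omega_0-\Omega_0$. Plugging this into \eqref{dotalpha*} gives
\[
\widehat{\dot\alpha}{}^*-\dot\alpha^*=-C\,\alpha^*=-2\,\phi(b)\,\alpha^*=-2\,b\times\alpha^*,
\]
where we used $C=2\phi(b)$ and \eqref{eq138'}.

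\emph{Step 2: solve for $\delta q:=\widehat q-q$.} By linearity of \eqref{eq:q-linearized-system}, $\delta q$ is the unique solution of $D_{\alpha^*}\delta q=-2\,b\times\alpha^*$ with $\delta q(p_0)=\widehat q_0-q_0$. We check that the candidate $\xi:=(f^*)^{-1}\cdot a\cdot f^*+b$ solves the same problem.

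For the constant $b$, we have $D_{\alpha^*}b=\alpha^*\cdot b-b\cdot\alpha^*=2\,\alpha^*\times b=-2\,b\times\alpha^*$ by (Q6), which is exactly the required right-hand side.

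For the first summand, differentiate using $d(f^*)^{-1}=-(f^*)^{-1}\cdot df^*\cdot(f^*)^{-1}$. This gives
\[
d\bigl((f^*)^{-1}af^*\bigr)=-\alpha^*\cdot\bigl((f^*)^{-1}af^*\bigr)+\bigl((f^*)^{-1}af^*\bigr)\cdot\alpha^*.
\]
Hence $D_{\alpha^*}\bigl((f^*)^{-1}af^*\bigr)=0$, so this summand is $D_{\alpha^*}$-parallel. (It is also imaginary, since conjugation preserves $\Im(\mathbb H)$.)

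At $p_0$, the definition of $a$ gives $\xi(p_0)=(\widehat q_0-q_0-b)+b=\widehat q_0-q_0$. By the uniqueness part of item~3 of Theorem~\ref{propos8.9} (Frobenius, $\Sigma$ simply connected), $\delta q=\xi$. This proves \eqref{eq:q-initial-data-change}.

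\emph{Step 3: normal components.} Left-multiplying \eqref{eq:q-initial-data-change} by $f^*$ gives
\[
\widehat X^*-X^*=f^*\cdot\delta q=a\cdot f^*+f^*\cdot b=K_{a,b}\circ f^*.
\]
Taking the inner product with $N^*$ yields \eqref{eq:u-star-initial-data-change}.

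\emph{The special cases.} In case (a), $C=0$ forces $b=0$, and the formula for $a$ follows directly. In case (b), $\widehat q_0=q_0$ gives $a=-f^*(p_0)\cdot b\cdot f^*(p_0)^{-1}$. Then
\[
K_{a,b}(f^*(p_0))=-f^*(p_0)\cdot b+f^*(p_0)\cdot b=0,
\]
so the added Killing field vanishes at $f^*(p_0)$.

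The only delicate point is sign bookkeeping in Step~2. One must match $D_{\alpha^*}b=[\alpha^*,b]$ with $-C\alpha^*$ using both $\phi(w)w'=w\times w'$ and $[x,y]=2x\times y$. This is exactly what forces the factor $\tfrac12$ in $b=\tfrac12\phi^{-1}(C)$. Everything else is linearity plus uniqueness.
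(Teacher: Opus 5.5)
Your proposal is correct and follows the paper's proof essentially step by step: $\widehat\Omega-\Omega$ is constant, $D_{\alpha^*}b=-C\alpha^*$ via (Q6), $(f^*)^{-1}\cdot a\cdot f^*$ is $D_{\alpha^*}$-parallel, the initial values match, and uniqueness from item~3 of Theorem~\ref{propos8.9} concludes. Your explicit checks of the special cases (a) and (b), including $K_{a,b}(f^*(p_0))=0$, fill in details the paper leaves implicit.
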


\begin{proof}
Let $(\Omega,\dot\alpha^*,q)$ and
$(\widehat\Omega,\widehat{\dot\alpha}^{\,*},\widehat q)$
be the objects determined by the two choices of initial data. Since
both $\Omega$ and $\widehat\Omega$ solve
\eqref{eq:Omega-IVP}, then $d\Omega=R^{\top}\dot{\mathcal A}R=
d\widehat\Omega$ (observe that $R,\dot{\mathcal{A}}$ do not depend on $\Omega(p_0),q(p_0)$ but only on $f,u$). Hence, $d(\widehat\Omega-\Omega)=0$. As $\Sigma$ is connected, there exists $C\in \mathfrak{so}(3)$ such that $\widehat\Omega-\Omega=C$ on $\Sigma$. It follows from \eqref{dotalpha*} that
\begin{equation}\label{eq:dotalpha-initial-data-change}
\widehat{\dot\alpha}^{\,*}-\dot\alpha^*
=-C\alpha^*.
\end{equation}

Subtracting the two equations~\eqref{eq:q-linearized-system} we obtain
\begin{equation}\label{eq:delta-q-system}
D_{\alpha^*}(\widehat q-q)
:=d(\widehat q-q)+\alpha^*\cdot(\widehat q-q)
-(\widehat q-q)\cdot\alpha^*
=\widehat{\dot\alpha}^{\,*}-\dot\alpha^*
\stackrel{\eqref{eq:dotalpha-initial-data-change}}{=}
-C\alpha^*.
\end{equation}
Since $C=2\phi(b)$, for every $X\in T\Sigma$ we have
\begin{equation}\label{eq126}
\bigl(\alpha^*\cdot b-b\cdot\alpha^*\bigr)(X)
=\alpha^*(X)\cdot b-b\cdot\alpha^*(X)\\
=2\alpha^*(X)\times b
=-2b\times\alpha^*(X)=-C\alpha^*(X).
\end{equation}
Therefore
\begin{equation}\label{eq:D-constant-b}
D_{\alpha^*}b\stackrel{\eqref{Dalpha*}}{=}
db+\alpha^*\cdot b-b\cdot \alpha ^*
=\alpha^*\cdot b-b\cdot\alpha^*\stackrel{\eqref{eq126}}{=}
-C\alpha^*.
\end{equation}

For any constant $a\in\Im (\mathbb{H})$, define
\(
c:=(f^*)^{-1}\cdot a\cdot f^*.
\)
Using $df^*=f^*\cdot\alpha^*$ and $d(f^*)^{-1}=-\alpha^*\cdot(f^*)^{-1}$, we find
$dc=-\alpha^*\cdot c+c\cdot\alpha^*$,
and hence~\eqref{Dalpha*} yields
\begin{equation}\label{eq:D-conjugated-a}
D_{\alpha^*}c=0.
\end{equation}
Equations \eqref{eq:D-constant-b} and
\eqref{eq:D-conjugated-a} show that $q_{a,b}:=(f^*)^{-1}\cdot a\cdot f^*+b$ solves \eqref{eq:delta-q-system}, as well as $\widehat q-q$. Regarding their initial conditions at $p_0$, we have
\begin{eqnarray*}
q_{a,b}(p_0)
&=&
f^*(p_0)^{-1}\cdot a\cdot f^*(p_0)+b\\
&\stackrel{\eqref{eq:left-Killing-initial-vector}}{=}&
f^*(p_0)^{-1}\cdot
\left[
f^*(p_0)\cdot
\bigl(\widehat q_0-q_0-b\bigr)
\cdot f^*(p_0)^{-1}
\right]
\cdot f^*(p_0)+b\\
&=&
\widehat q_0-q_0=(\widehat q-q)(p_0).
\end{eqnarray*}

By~\eqref{eq:left-Killing-initial-vector},
$q_{a,b}(p_0)=\widehat q_0-q_0=(\widehat q-q)(p_0)$.
The uniqueness statement in item~3 of Theorem~\ref{propos8.9} therefore implies \eqref{eq:q-initial-data-change}.

Finally, the corresponding change in the vector field along $f^*$
is
\[
\widehat X^*-X^*
=f^*\cdot(\widehat q-q)
\stackrel{\eqref{eq:q-initial-data-change}}{=}
f^*\cdot ((f^*)^{-1}\cdot a\cdot f^*+b)
=a\cdot f^*+f^*\cdot b
=K_{a,b}\circ f^*.
\]
Taking the normal component proves~\eqref{eq:u-star-initial-data-change}. This completes the proof.
\end{proof}

\section{Conjugate Jacobi functions, gauge freedom and exchange of Dirichlet and Neumann conditions}
\label{sec:integrable-gauge}

We now turn to the boundary behavior of the Jacobi conjugate. The
construction in Section~\ref{secJacconj} applies to every Jacobi
function and requires no integrability assumption through a family of minimal immersions; as in that section,
we choose an arbitrary normal variation
\[
f_s\colon\Sigma\to\mathbb S^3,
\qquad f_0=f,
\qquad
\left.\frac{d}{ds}\right|_{s=0}f_s=uN.
\]
The immersions $f_s$ need not be minimal for $s\ne0$; they are used
only to represent the first jets
$g_\varepsilon,J_\varepsilon,\alpha_\varepsilon$, and
$A_\varepsilon$ for the metric, complex structure Maurer-Cartan form and shape operator. The conjugate Jacobi field $X^*=f^*\cdot q$ and its
normal component $u^*=\langle X^*,N^*\rangle$
are those constructed in Theorem~\ref{propos8.9}.

By Lemma~\ref{lem:initial-data-gauge}, changing the initial values $\Omega(p_0)\in\mathfrak{so}(3)$ for~\eqref{eq:Omega-IVP}
and $q(p_0)\in\Im (\mathbb{H})$ for~\eqref{eq:q-linearized-system} changes $X^*$ by the restriction of an arbitrary
ambient Killing field. Thus, for $\Lambda\in\mathfrak{so}(4)$, the
corresponding gauge change is
\begin{equation}\label{eq:ambient-gauge}
u^*\mapsto u^*+u^*_\Lambda,
\qquad
u^*_\Lambda:=\langle\Lambda f^*,N^*\rangle.
\end{equation}

The notation in \eqref{eq:ambient-gauge} is equivalent to that used in
Lemma~\ref{lem:initial-data-gauge}. Indeed, under the standard
isomorphism
\[
\Im (\mathbb{H})\oplus\Im (\mathbb{H})\to \mathfrak{so}(4),
\quad
(a,b)\mapsto \Lambda_{a,b},\qquad 
\mbox{where $\Lambda_{a,b}(x):=a\cdot x+x\cdot b,
\qquad x\in\mathbb H$,}
\]
the Killing field $K_{a,b}$ introduced in~\eqref{Kab} is precisely
$K_{a,b}(x)=\Lambda_{a,b}x$. Consequently, along $f^*$, it holds
$K_{a,b}\circ f^*=
a\cdot f^*+f^*\cdot b
=
\Lambda_{a,b}f^*$, and the corresponding Jacobi--Killing function can be written
equivalently as
\[
u_{a,b}^*
:=
\left\langle K_{a,b}\circ f^*,N^*\right\rangle
=
\left\langle \Lambda_{a,b}f^*,N^*\right\rangle
=
u_{\Lambda_{a,b}}^*.
\]
Thus the notation $u_\Lambda^*$ used here is simply the coordinate-free version of the notation $u_{a,b}^*$ from Definition~\ref{def6.6}; every $\Lambda\in\mathfrak{so}(4)$ is uniquely of the form $\Lambda=\Lambda_{a,b}$.

The function $u^*_\Lambda$ is Jacobi--Killing. The two boundary
results below (Theorems~\ref{thm:D-to-N} and~\ref{thm:N-to-D}) amount to choosing this gauge appropriately.

\begin{remark}[Relation with almost-even and almost-odd fields]
\label{rem:almost-even-almost-odd}
{\rm
A related boundary phenomenon appears in
\cite[Section~3.4, Proposition~3.8]{GKKRS2009}
for the conjugate-cousin correspondence between CMC $1$-surfaces in $\mathbb R^3$ and minimal surfaces in $\mathbb S^3$.
There, a variational field is called almost even if, along each
boundary component, it differs from an even field by a translation,
whereas a cousin field is called almost odd if, along each conjugate
boundary component, it differs from an odd field by a left
translation. The cited proposition shows that the cousin
correspondence exchanges these two weakened parity conditions.
As the authors point out, the uncorrected even--odd exchange need not hold.
Their result is closely related to the boundary mechanism underlying
our Theorems~\ref{thm:D-to-N} and~\ref{thm:N-to-D}, but the statements
are different. Proposition~3.8 of~\cite{GKKRS2009} concerns
vector-valued Jacobi fields in the CMC--minimal cousin
correspondence and allows a separate translation correction along
each boundary component. Here we consider scalar Jacobi functions
for conjugate minimal immersions in $\mathbb S^3$ along one
prescribed arc, and obtain the desired homogeneous boundary
condition by choosing a single global ambient Killing gauge.
}
\end{remark}

\subsection{From Dirichlet to Neumann data}
\label{sec:D-to-N}

\begin{theorem}\label{thm:D-to-N}
Let $f\colon\Sigma\to\esf^3$ be a simply connected minimal immersion
and let $\gamma\subset\Sigma$ be such that $f(\gamma)$ is a
great-circle arc. If $u$ is a Jacobi function satisfying $u|_\gamma=0$,
then the initial conditions in Theorem~\ref{propos8.9} can be
chosen so that the resulting Jacobi conjugate $u^*$ satisfies
$ \partial_{\eta}u^*=0$  along $\gamma$ (here $\eta$ denotes the conormal in the common parameter surface; under
$f^*$ it is the conormal to the reflection curve $f^*(\gamma)$).
\end{theorem}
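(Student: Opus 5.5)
We restrict everything to $\gamma$, parametrized by arc length with $T=\gamma'$ and $\eta=JT$, and work with the first jets of Section~\ref{secJacconj}. The plan is to compute $\partial_\eta u^*$ from the Maurer--Cartan data $q$ and $\dot\alpha^*$, and then use the gauge freedom of Lemma~\ref{lem:initial-data-gauge} to kill it.

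\emph{Step 1: rewrite $\partial_\eta u^*$ in terms of $q$.} Since $u^*=\langle q,N_1^*\rangle$ with $N_1^*=(f^*)^{-1}\cdot N^*$, we write $dq=\dot\alpha^*-[\alpha^*,q]$. The left-trivialized Weingarten equation (Claim~4 at $\varepsilon=0$) gives $dN_1^*(X)=-\alpha^*(A^*X)-\alpha^*(X)\times N_1^*$. The commutator terms cancel, because $[\alpha^*(X),q]=2\alpha^*(X)\times q$ is paired against $N_1^*$ and combines with the cross-product term of the Weingarten equation. This should leave
\[
\partial_\eta u^*=\langle\dot\alpha^*(\eta),N_1^*\rangle-\langle q,\alpha^*(A^*\eta)\rangle
\]
up to signs I still have to check.

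\emph{Step 2: use the boundary conditions.} Along $\gamma$ we have $u=0$, so $d(uN)(\eta)=(\partial_\eta u)N$ and $d(uN)(T)=0$. Hence \eqref{alphadot} gives $\dot\alpha(\eta)=(\partial_\eta u)N_1$ and $\dot\alpha(T)=0$, and \eqref{dotJ} gives $\dot J=0$. Substituting into \eqref{dotalpha*} and evaluating at $\eta$, with $J\eta=-T$, yields
\[
\dot\alpha^*(\eta)=-\Omega\alpha^*(\eta)+R^\top\dot\alpha(-T)=-\Omega\alpha^*(\eta).
\]
Evaluating at $T$ yields $\dot\alpha^*(T)=-\Omega\alpha^*(T)+(\partial_\eta u)R^\top N_1=-\Omega\alpha^*(T)+(\partial_\eta u)N_1^*$, using \eqref{eq152}. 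Since $f(\gamma)$ is a great circle, $\sigma(T,T)=0$, and for the conjugate $\sigma^*(T,\eta)=\sigma(T,J\eta)=-\sigma(T,T)=0$. So $A^*\eta$ is a multiple of $\eta$, namely $A^*\eta=\sigma^*(\eta,\eta)\eta$, and $\sigma^*(\eta,\eta)=\sigma(\eta,-T)$ is the torsion-type term. Therefore
\[
\partial_\eta u^*=-\langle\Omega\alpha^*(\eta),N_1^*\rangle-\sigma(\eta,-T)\langle q,\alpha^*(\eta)\rangle
\]
along $\gamma$.

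\emph{Step 3: show the two terms can be made to vanish simultaneously on $\gamma$.} First, along $\gamma$, $d\Omega(T)=R^\top\dot{\mathcal A}(T)R$. By Lemma~\ref{lem:linearized-R}, using $u=0$ and $\dot J=0$,
\[
\dot{\mathcal A}(T)=\phi\bigl(f^{-1}\cdot d(uN)((J-I)T)\bigr)=\phi\bigl((\partial_\eta u)N_1\bigr).
\]
Hence $d\Omega(T)=(\partial_\eta u)\,\phi(N_1^*)$, because $R^\top\phi(v)R=\phi(R^\top v)$. The frame $(\alpha^*(T),\alpha^*(\eta),N_1^*)$ evolves along $\gamma$ by the conjugate-frame equations. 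By Lemma~\ref{lem8.14}, $f^*(\gamma)$ lies in the totally geodesic sphere orthogonal to the constant vector ${\bf a}=df^*(\eta)$, so $N_1^*$ and $\alpha^*(T)$ remain in a left-translated reflection plane. I then aim to show that the quantities $\langle\Omega\alpha^*(\eta),N_1^*\rangle$ and $\langle q,\alpha^*(\eta)\rangle$, i.e.\ the components of $X^*$ along ${\bf a}$, satisfy a closed linear first-order ODE along $\gamma$ with no inhomogeneous term. The key point is that the source $(\partial_\eta u)$ enters only through $\phi(N_1^*)$ acting on the reflection plane and through $(\partial_\eta u)N_1^*$ in $\dot\alpha^*(T)$, both of which are orthogonal to ${\bf a}$-directions. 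Concretely, I expect $\langle X^*,{\bf a}\rangle$ and the rotation component $\langle\Omega\alpha^*(\eta),N_1^*\rangle$ to be functions whose vanishing at $p_0\in\gamma$ propagates along $\gamma$ by uniqueness. Lemma~\ref{lem:initial-data-gauge} then lets us choose $(\Omega_0,q_0)$, i.e.\ a Killing field $K_{a,b}$, making both vanish at $p_0$. Equivalently, $X^*$ is tangent to the reflection sphere and its infinitesimal rotation preserves that sphere, i.e.\ $X^*$ is an ``even'' field, so $\partial_\eta u^*=0$.

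The main obstacle is Step~3: deriving the closed homogeneous ODE system along $\gamma$. This requires tracking $\Omega$, $q$ and the moving frame $R^\top$ together and checking that every $\partial_\eta u$ source term lies in the directions invariant under reflection. I would first try the geometric reformulation, namely showing that $\langle X^*,{\bf a}\rangle$ and $\langle\Omega\,\cdot\,,\cdot\rangle$ restricted to the ${\bf a}$-plane have zero derivative along $\gamma$, before attempting brute-force components.
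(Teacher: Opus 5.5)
You correctly identify Step~3 as the crux, but it cannot be completed as set up, and Step~1 already targets the wrong quantities. The commutator terms in Step~1 do not cancel. From $X^*=f^*\cdot q$ and \eqref{eq:q-linearized-system} one has $dX^*=f^*\cdot(\dot\alpha^*+q\cdot\alpha^*)$, and the left-trivialized Weingarten equation then gives
\[
\partial_\eta u^*=\langle\dot\alpha^*(\eta),N_1^*\rangle+\langle q,\alpha^*(T)\rangle-\langle q,\alpha^*(A^*\eta)\rangle .
\]
The middle term comes from $-2\langle\alpha^*(\eta)\times q,N_1^*\rangle-\langle q,\alpha^*(\eta)\times N_1^*\rangle=\langle q,\alpha^*(T)\rangle$. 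This is the $df^*(T)$-component of $X^*$, and it does not vanish in general. So on $\gamma$ you must kill $\langle q,\alpha^*(\eta)\rangle=\langle X^*,\mathbf a\rangle$ and $\langle q,\alpha^*(T)\rangle-\langle\Omega\alpha^*(\eta),N_1^*\rangle=\langle\nabla^{\esf^3}_{df^*(\eta)}X^*,N^*\rangle$, not $\langle\Omega\alpha^*(\eta),N_1^*\rangle$.

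Moreover, your pair satisfies no closed homogeneous system. Use your Step~2 formulas, $\tfrac{d}{dt}\alpha^*(\eta)=-N_1^*$ (since $\mathbf a$ is constant), and $\tfrac{d}{dt}N_1^*=\alpha^*(\eta)-\kappa\,\alpha^*(T)$ with $\kappa=\sigma(T,\eta)$. Along $\gamma$ this gives
\[
\tfrac{d}{dt}\langle X^*,\mathbf a\rangle=u^*-\langle\Omega\alpha^*(T),\alpha^*(\eta)\rangle,\qquad
\tfrac{d}{dt}\langle\Omega\alpha^*(\eta),N_1^*\rangle=-\kappa\,\langle\Omega\alpha^*(\eta),\alpha^*(T)\rangle.
\]
In turn, $\tfrac{d}{dt}\langle\Omega\alpha^*(\eta),\alpha^*(T)\rangle$ contains the source $-\partial_\eta u$, coming from $d\Omega(T)=(\partial_\eta u)\phi(N_1^*)$. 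The source enters individual components (also $\tfrac{d}{dt}u^*$) and cancels only in specific combinations, so vanishing at $p_0$ does not propagate. Even the correct pair consists of only two of the three components of the constant vector $\dot{\mathbf a}$ below, in the moving frame $(f^*,df^*(T),N^*)$ of $\mathbf a^\perp$. Two conditions at one point can therefore never suffice.

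The missing idea, which is what the paper uses, is to follow one vector rather than scalars: the first variation $\dot{\mathbf a}$ of the conormal $\mathbf a=df^*(\eta)$ of Lemma~\ref{lem8.14}. Since $\dot J=0$ on $\gamma$, there $\dot{\mathbf a}=dX^*(\eta)=f^*\cdot\bigl(q\cdot\alpha^*(\eta)-\Omega\alpha^*(\eta)\bigr)$. Differentiate this along $\gamma$ with your own Step~2 data:
\begin{enumerate}
\item the $q$-terms cancel because $\alpha^*(T)\cdot\alpha^*(\eta)=N_1^*$;
\item the $\partial_\eta u$-terms cancel because $N_1^*\cdot\alpha^*(\eta)=N_1^*\times\alpha^*(\eta)$;
\item the $\Omega$-terms cancel because $\Omega$ is skew and satisfies \eqref{eq:Omega-cross-derivation}.
\end{enumerate}
Hence $\dot{\mathbf a}$ is constant, and $\langle\dot{\mathbf a},\mathbf a\rangle=\Re(q)=0$. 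The paper obtains these two facts from first jets, using $\dot{\mathbf b}=0$ and $\dot g(T,T)=-2u\,\sigma(T,T)=0$.

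The gauge $X^*\mapsto X^*+\Lambda f^*$ of Lemma~\ref{lem:initial-data-gauge} shifts $\dot{\mathbf a}$ by $\Lambda\mathbf a$. A single $\Lambda\in\mathfrak{so}(4)$ with $\Lambda\mathbf a=-\dot{\mathbf a}$ therefore gives $\dot{\mathbf a}=0$. Then $\langle X^*,\mathbf a\rangle=-\langle f^*,\dot{\mathbf a}\rangle=0$ and $\langle\nabla^{\esf^3}_{df^*(\eta)}X^*,N^*\rangle=\langle\dot{\mathbf a},N^*\rangle=0$. The corrected formula above, with $A^*\eta=\sigma^*(\eta,\eta)\eta$, then yields $\partial_\eta u^*=0$. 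Your ``even field'' picture is the right intuition, but it is encoded by the three scalar conditions $\dot{\mathbf a}=0$, not by your two.
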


\begin{proof}
Use $\gamma(t)$ also for the arc-length parametrization of the source arc. Then
$f(\gamma (t))=\cos t\,{\bf p}+\sin t\,{\bf v}$, $\langle{\bf p},{\bf v}\rangle=0$, 
and put $T=\gamma '$ and $\eta=JT$.

Choose any normal variation $f_s$ with variational field $uN$. We use
only the first jets of objects associated to $f_s$, denoted as in Section~\ref{secJacconj}. Set
\begin{equation}\label{etavarepsilon}
 \eta_\varepsilon:=J_\varepsilon \gamma ',
 \qquad
 {\bf b}_\varepsilon:=(\alpha_\varepsilon)_\gamma (\gamma '),
 \qquad
 {\bf a}_\varepsilon:=d(f_\varepsilon^*)_\gamma (\eta_\varepsilon),
\end{equation}
where $J_{\varepsilon},\alpha_{\varepsilon}$ are given by~\eqref{alphaJAepsilon}, and
$f_\varepsilon^*$ is the actual variation constructed in
\eqref{fvarepsilon*} (in particular, we have made a choice of 
the initial values $\Omega(p_0)\in\mathfrak{so}(3)$ for~\eqref{eq:Omega-IVP} and $q(p_0)\in\Im (\mathbb{H})$ for~\eqref{eq:q-linearized-system}). 

Observe that although $\gamma $ is parametrized by arc length with respect to $g=g_0$, the fixed vector field $\gamma '$, and hence $\eta_\varepsilon=J_\varepsilon \gamma '$, need not be unit with respect to the deformed metric $g_\varepsilon$ given by~\eqref{alphaJAepsilon} ($\eta_\varepsilon$ is $g_\varepsilon$-unit up to second order). Also note that for each fixed $\varepsilon$, both ${\bf b}_{\varepsilon}$ and ${\bf a}_{\varepsilon}$ depend smoothly on $t$, and ${\bf b}_{\varepsilon}$ takes values in $\Im (\mathbb{H})$ while ${\bf a}_{\varepsilon}$ is tangent to $f_{\varepsilon}^*$ along the curve $f_{\varepsilon}^*(\gamma )$.
Thus, we can expand 
\[
{\bf b}_\varepsilon = {\bf b} + \varepsilon\dot{\bf b}, \quad
{\bf a}_\varepsilon = {\bf a} + \varepsilon\dot{\bf a}
+ \mathcal O(\varepsilon^2), \quad 
\mbox{where }
\dot{\bf b}
:=
\left.\frac{d}{d\varepsilon}\right|_{\varepsilon=0}
{\bf b}_\varepsilon, 
\quad 
\dot{\bf a}
:=
\left.\frac{d}{d\varepsilon}\right|_{\varepsilon=0}
{\bf a}_\varepsilon,
\]
where the first expansion is exact (without term $\mathcal{O}(\varepsilon^2)$) because $\alpha_\varepsilon=\alpha+\varepsilon\dot\alpha$ and ${\bf b}_\varepsilon:=(\alpha_\varepsilon)_\gamma (\gamma ')$.
\par\vspace{.2cm}\noindent
{\bf Claim.} {\em 
As functions of $t$, both ${\bf b}$, ${\bf a}$ and $\dot{\bf a}$ are a constant vector along $\gamma $. Moreover, $\dot{\bf b}=0$ along $\gamma $.
}
\par\noindent
{\em Proof of the claim.}  Note that 
\(
{\bf b}:=\alpha_\gamma (T)=\overline{\bf p}\cdot{\bf v},
\)
which is constant along $\gamma $ by item~1 of Lemma~\ref{lem8.14}. Also,
\(
{\bf a}:=df_\gamma ^*(\eta)=-f^*(\gamma )\cdot R_\gamma ^\top{\bf b}
\)
is constant in $t$ by item~2 of Lemma~\ref{lem8.14}.

Since $u=0$ on $\gamma$, the variational field $uN$ vanishes
there, and differentiation of ${\bf b}_\varepsilon$ at $\varepsilon=0$ gives
\begin{equation*}
 \dot{\bf b}
 =
 -f(\gamma )^{-1}\cdot (uN)_\gamma \cdot f(\gamma )^{-1}\cdot df_\gamma (\gamma ')
 +f(\gamma )^{-1}\cdot d(uN)_\gamma (\gamma ')
 =0 \qquad \mbox{along $\gamma $.}
\end{equation*}

Thus, in order to finish the proof of the claim, it only remains to demonstrate that $\dot{\bf a}$ is constant along $\gamma $. To do this, define
\begin{equation}\label{hvarepsilon}
{\bf h}_\varepsilon
:=(\alpha_\varepsilon)_\gamma (\eta_\varepsilon),
\qquad
B_\varepsilon:=(R_{\varepsilon})_\gamma ^{\top}{\bf b}_\varepsilon,
\qquad
H_\varepsilon:=(R_{\varepsilon})_\gamma ^{\top}{\bf h}_\varepsilon.
\end{equation}
Then
\begin{eqnarray}
d(f_\varepsilon^*)_\gamma (\gamma ')
&\stackrel{\eqref{MCfepsilon*}}{=}&
f_\varepsilon^*(\gamma )\cdot (\theta_\varepsilon)_\gamma (\gamma ')
\stackrel{\eqref{thetavarepsilon}}{=}
f_\varepsilon^*(\gamma )\cdot
(R_\varepsilon)_\gamma ^{\top}(\alpha_\varepsilon\circ J_\varepsilon)(\gamma ')
+\mathcal O(\varepsilon^2)  \nonumber
\\
&\stackrel{\eqref{etavarepsilon}}{=}&
f_\varepsilon^*(\gamma )\cdot
(R_\varepsilon)_\gamma ^{\top}(\alpha_\varepsilon)_\gamma (\eta _\varepsilon)
+\mathcal O(\varepsilon^2)
\stackrel{\eqref{hvarepsilon}}{=}
f_\varepsilon^*(\gamma )\cdot
(R_\varepsilon)_\gamma ^{\top}{\bf h}_\varepsilon
+\mathcal O(\varepsilon^2)  \nonumber
\\
&\stackrel{\eqref{hvarepsilon}}{=}&
f_\varepsilon^*(\gamma )\cdot H_\varepsilon
+\mathcal O(\varepsilon^2),  \label{df*varepsilonc'}
\\
{\bf a}_\varepsilon
&=&
d(f_\varepsilon^*)_\gamma (\eta_\varepsilon)\stackrel{\eqref{MCfepsilon*}}{=}
f_\varepsilon^*(\gamma )\cdot (\theta_\varepsilon)_\gamma (\eta_\varepsilon)
\stackrel{\eqref{thetavarepsilon}}{=}
f_\varepsilon^*(\gamma )\cdot
(R_\varepsilon)_\gamma ^{\top}(\alpha_\varepsilon\circ J_\varepsilon)
(\eta_\varepsilon)
+\mathcal O(\varepsilon^2)  \nonumber
\\
&\stackrel{\eqref{etavarepsilon}}{=}&
f_\varepsilon^*(\gamma )\cdot
(R_\varepsilon)_\gamma ^{\top}(\alpha_\varepsilon)_\gamma (J_\varepsilon^2 \gamma ')
+\mathcal O(\varepsilon^2)
\stackrel{\eqref{eqJepsilon2}}{=}
-f_\varepsilon^*(\gamma )\cdot
(R_\varepsilon)_\gamma ^{\top}(\alpha_\varepsilon)_\gamma (\gamma ')
+\mathcal O(\varepsilon^2)  \nonumber
\\
&\stackrel{\eqref{etavarepsilon}}{=}&
-f_\varepsilon^*(\gamma )\cdot
(R_\varepsilon)_\gamma ^{\top} {\bf b}_\varepsilon
+\mathcal O(\varepsilon^2)
\stackrel{\eqref{hvarepsilon}}{=}
-f_\varepsilon^*(\gamma )\cdot B_\varepsilon
+\mathcal O(\varepsilon^2).   \label{avarepsilon}
\end{eqnarray}
Differentiating~\eqref{avarepsilon} in the direction $\gamma '$ and
using the Leibniz rule, we obtain
\begin{eqnarray}
d({\bf a}_\varepsilon)_\gamma (\gamma ')
&=&
-d(f_\varepsilon^*)_\gamma (\gamma ')\cdot B_\varepsilon
-f_\varepsilon^*(\gamma )\cdot
d(B_\varepsilon)_\gamma (\gamma ')  
+\mathcal O(\varepsilon^2)  \nonumber
\\
&\stackrel{\eqref{df*varepsilonc'}}{=}&
-f_\varepsilon^*(\gamma )\cdot H_\varepsilon\cdot B_\varepsilon
-f_\varepsilon^*(\gamma )\cdot
d(B_\varepsilon)_\gamma (\gamma ')  
+\mathcal O(\varepsilon^2). \label{eq:da-product-rule}
\end{eqnarray}
To compute the second term in the right-hand side of~\eqref{eq:da-product-rule}, we differentiate the definition of $B_\varepsilon$ along $\gamma $:
\begin{eqnarray*}
d(B_\varepsilon)_\gamma (\gamma ')
&=&
d(R_\varepsilon^{\top})_\gamma (\gamma ')\,{\bf b}_\varepsilon
+
(R_\varepsilon)_\gamma ^{\top}
d({\bf b}_\varepsilon)_\gamma (\gamma ')
\\
&\stackrel{\eqref{eq:formal-R}}{=}&
(\mathcal A_\varepsilon R_\varepsilon)_\gamma ^{\top}
(\gamma ')\,{\bf b}_\varepsilon
+
(R_\varepsilon)_\gamma ^{\top}
d({\bf b}_\varepsilon)_\gamma (\gamma ')
+\mathcal{O}(\varepsilon^2)
\\
&=&
-(R_\varepsilon)_\gamma ^{\top}(\mathcal A_\varepsilon)_\gamma 
(\gamma ')\,{\bf b}_\varepsilon
+
(R_\varepsilon)_\gamma ^{\top}
d({\bf b}_\varepsilon)_\gamma (\gamma ')
+\mathcal{O}(\varepsilon^2)
\\
&\stackrel{\eqref{eq75}}{=}&
-(R_\varepsilon)_\gamma ^{\top}
\phi\left( (\alpha_\varepsilon)_\gamma (J_\varepsilon \gamma '-\gamma ')\right)
\,{\bf b}_\varepsilon
+
(R_\varepsilon)_\gamma ^{\top}
d({\bf b}_\varepsilon)_\gamma (\gamma ')
+\mathcal{O}(\varepsilon^2)
\\
&\stackrel{\eqref{etavarepsilon},\eqref{hvarepsilon}}{=}&
-(R_\varepsilon)_\gamma ^{\top}\phi\left(
{\bf h}_\varepsilon-{\bf b}_\varepsilon \right) {\bf b}_\varepsilon
+
(R_\varepsilon)_\gamma ^{\top}
d({\bf b}_\varepsilon)_\gamma (\gamma ')
+ \mathcal O(\varepsilon^2)
\\
&\stackrel{\eqref{eq138'}}{=}&
-(R_\varepsilon)_\gamma ^{\top}
[({\bf h}_\varepsilon-{\bf b}_\varepsilon)
\times{\bf b}_\varepsilon]
+
(R_\varepsilon)_\gamma ^{\top}
d({\bf b}_\varepsilon)_\gamma (\gamma ')
+ \mathcal O(\varepsilon^2)
\\
&=&
-(R_\varepsilon)_\gamma ^{\top}
({\bf h}_\varepsilon\times{\bf b}_\varepsilon)
+
(R_\varepsilon)_\gamma ^{\top}
d({\bf b}_\varepsilon)_\gamma (\gamma ')
+ \mathcal O(\varepsilon^2).
\end{eqnarray*}
Recall that $R_\varepsilon$ preserves the Euclidean
cross product up to terms of order $\mathcal{O}(\varepsilon^2)$. Thus,
\begin{eqnarray}
d(B_\varepsilon)_\gamma (\gamma ')
&=&
-\bigl((R_\varepsilon)_\gamma ^{\top}{\bf h}_\varepsilon\bigr)
\times
\bigl((R_\varepsilon)_\gamma ^{\top}{\bf b}_\varepsilon\bigr)
+(R_\varepsilon)_\gamma ^{\top}d({\bf b}_\varepsilon)_\gamma (\gamma ')
+ \mathcal O(\varepsilon^2)  \nonumber
\\
&\stackrel{\eqref{hvarepsilon}}{=}&
-H_\varepsilon\times B_\varepsilon
+(R_\varepsilon)_\gamma ^{\top}d({\bf b}_\varepsilon)_\gamma (\gamma ')
+\mathcal O(\varepsilon^2).  \label{dBvarepsilonc'}
\end{eqnarray}
Plugging~\eqref{dBvarepsilonc'} into~\eqref{eq:da-product-rule} we get
\[
d({\bf a}_\varepsilon)_\gamma (\gamma ')
=
f_\varepsilon^*(\gamma )\cdot \big( -H_\varepsilon\cdot B_\varepsilon
+H_\varepsilon\times B_\varepsilon\big)
-f_\varepsilon^*(\gamma )\cdot 
(R_\varepsilon)_\gamma ^{\top}d({\bf b}_\varepsilon)_\gamma (\gamma ')
+\mathcal O(\varepsilon^2). 
\]

Since both $H_\varepsilon,B_\varepsilon$ lie in $\Im (\mathbb{H})$ and
\begin{eqnarray*}
\langle H_\varepsilon,B_\varepsilon\rangle
&\stackrel{\eqref{hvarepsilon}}{=}&
\langle (R_{\varepsilon})_\gamma ^{\top}{\bf h}_\varepsilon,
(R_{\varepsilon})_\gamma ^{\top}{\bf b}_\varepsilon\rangle 
=
\langle (R_{\varepsilon}R_{\varepsilon}^{\top})_\gamma {\bf h}_\varepsilon,
{\bf b}_\varepsilon\rangle
\\
&\stackrel{\eqref{eq:R-epsilon-orthogonal}}{=}&
\langle {\bf h}_\varepsilon,{\bf b}_\varepsilon\rangle
+\mathcal O(\varepsilon^2)
\stackrel{\eqref{etavarepsilon},\eqref{hvarepsilon}}{=}
\langle (\alpha_\varepsilon)_\gamma (\eta_\varepsilon),
(\alpha_\varepsilon)_\gamma (\gamma ')\rangle
+\mathcal O(\varepsilon^2)
\\
&=&
g_\varepsilon(J_\varepsilon \gamma ',\gamma ')
+\mathcal O(\varepsilon^2)
=\mathcal O(\varepsilon^2),
\end{eqnarray*}
then we conclude that $H_\varepsilon\cdot B_\varepsilon
=H_\varepsilon\times B_\varepsilon+\mathcal O(\varepsilon^2)$.
Thus,
\begin{equation}\label{eq:a-derivative-along-c}
d({\bf a}_\varepsilon)_\gamma (\gamma ')
=
-f_\varepsilon^*(\gamma )\cdot 
(R_\varepsilon)_\gamma ^{\top}d({\bf b}_\varepsilon)_\gamma (\gamma ')
+\mathcal O(\varepsilon^2). 
\end{equation}

Since both ${\bf b}$ and $\dot{\bf b}=0$ are constant along $\gamma $, it follows that their respective tangential derivatives along $\gamma $ also vanish:
\begin{equation}\label{eq:d-bdot-zero-along-c}
d{\bf b}_\gamma (\gamma ')=0, \qquad d\dot{\bf b}_\gamma (\gamma ')=0.
\end{equation}
Differentiating ${\bf b}_\varepsilon={\bf b}+\varepsilon\dot{\bf b}$ along $\gamma $ and using~\eqref{eq:d-bdot-zero-along-c}, we obtain
\begin{equation*}
d({\bf b}_\varepsilon)_\gamma (\gamma ')=d{\bf b}_\gamma (\gamma ')
+\varepsilon\,d\dot{\bf b}_\gamma (\gamma ')=0.
\end{equation*}
It follows from \eqref{eq:a-derivative-along-c} that
\begin{equation}\label{davarepsilonc'}
d({\bf a}_\varepsilon)_\gamma (\gamma ')
=
\mathcal O(\varepsilon^2). 
\end{equation}

Differentiating ${\bf a}_\varepsilon = {\bf a} + \varepsilon\dot{\bf a}
+ \mathcal O(\varepsilon^2)$ along $\gamma $ gives
\begin{equation}\label{eq:da-epsilon-expansion}
	d({\bf a}_\varepsilon)_\gamma (\gamma ')
	=
	d{\bf a}_\gamma (\gamma ')
	+
	\varepsilon\,d\dot{\bf a}_\gamma (\gamma ')
	+
	\mathcal O(\varepsilon^2).
\end{equation}
Since ${\bf a}$ is constant along $\gamma $, we have $d{\bf a}_\gamma (\gamma ')=0$. Using this together with~\eqref{davarepsilonc'} and~\eqref{eq:da-epsilon-expansion}, we have $d\dot{\bf a}_\gamma (\gamma ')=0$. Thus, $\dot{\bf a}$ is a constant vector along $\gamma $ and the claim is proved.
\par\vspace{.2cm}

We next show that the constant vectors ${\bf a},\dot{\bf a}$ are orthogonal.
Let $g_\varepsilon^*$ denote the metric induced by $f_\varepsilon^*$. By the construction in Section~\ref{secJacconj}, we have 
$g_\varepsilon^*=g_\varepsilon+\mathcal O(\varepsilon^2)$. Now,
\begin{eqnarray*}
|{\bf a}_\varepsilon|^2
&\stackrel{\eqref{etavarepsilon}}{=}&
|d(f_\varepsilon^*)_\gamma (\eta_\varepsilon)|^2
=
g_\varepsilon^*(\eta_\varepsilon,\eta_\varepsilon)=
g_\varepsilon(\eta_\varepsilon,\eta_\varepsilon)
+\mathcal O(\varepsilon^2)
\\
&\stackrel{\eqref{etavarepsilon}}{=}&
g_\varepsilon(J_\varepsilon \gamma ',J_\varepsilon \gamma ')
+\mathcal O(\varepsilon^2)
=
g_\varepsilon(\gamma ',\gamma ')
+\mathcal O(\varepsilon^2),
\end{eqnarray*}
Differentiating at $\varepsilon=0$ gives
$2\langle\dot{\bf a},{\bf a}\rangle
=
\dot g(\gamma ',\gamma ')\stackrel{\eqref{dotg}}{=}-2u\,\sigma(\gamma ',\gamma ')=0$, where in the last equality we have used the hypothesis $u=0$ along $\gamma$. Therefore, $\langle\dot{\bf a},{\bf a}\rangle=0$. Since both ${\bf a},\dot{\bf a}$ are constant along $\gamma $,
$|{\bf a}|=1$ and $\dot{\bf a}\perp{\bf a}$, there exists $\Lambda\in\mathfrak{so}(4)$ such that
\begin{equation}\label{Lambdaadota}
\Lambda{\bf a}=-\dot{\bf a}.
\end{equation}
For example, one may take $\Lambda x
=
-\langle{\bf a},x\rangle\dot{\bf a}
+
\langle\dot{\bf a},x\rangle{\bf a}$,
 $x\in\mathbb H$.

By Lemma~\ref{lem:initial-data-gauge}, we may change the initial values $\Omega(p_0)\in\mathfrak{so}(3)$ for~\eqref{eq:Omega-IVP} and $q(p_0)\in\Im (\mathbb{H})$ for~\eqref{eq:q-linearized-system} so that $X^*$ is replaced by 
\[
\widehat X^*:=X^*+\Lambda f^*.
\]
We now explain how this gauge change affects ${\bf a}_{\varepsilon}$ and $\dot{\bf a}$. The vector field $\eta_\varepsilon$ depends only on the original variation $f_s$, and remains therefore unaffected by the change of initial conditions in the linearized conjugate system. 

Let $f_\varepsilon^*$ and $\widehat f_\varepsilon^*$ be smooth variations of $f^*$ whose variational fields are $X^*$ and
$\widehat X^*$, respectively (the fact that 
$f_\varepsilon^*,\widehat f_\varepsilon^*$ are given by  formula~\eqref{fvarepsilon*}) is not relevant in what follows). Define
${\bf a}_\varepsilon
=
d(f_\varepsilon^*)_\gamma (\eta_\varepsilon)$,
$\widehat{\bf a}_\varepsilon
=
d(\widehat f_\varepsilon^*)_\gamma (\eta_\varepsilon)$. Differentiating these equalities at $\varepsilon=0$ we obtain $\dot{\bf a}=dX_\gamma ^*(\eta)+df_\gamma ^*(\dot\eta)$, $\widehat{\dot{\bf a}}=d\widehat X_\gamma ^*(\eta)+df_\gamma ^*(\dot\eta)$. Subtracting the two identities, the terms containing $\dot\eta$ cancel, and
\begin{equation}\label{changedota}
\widehat{\dot{\bf a}}-\dot{\bf a}=d(\widehat X^*-X^*)_\gamma (\eta)=
d(\Lambda f^*)_\gamma (\eta)=\Lambda\, df_\gamma ^*(\eta)=\Lambda\, {\bf a}.
\end{equation}

Now, \eqref{Lambdaadota} and~\eqref{changedota} imply that after
making this change of initial conditions, we have $\widehat{\dot{\bf a}}=0$. To simplify the notation, we shall henceforth omit the hat and we may therefore assume that
\begin{equation}\label{eq:adot-zero}
\dot{\bf a}=0 \qquad\text{along }\gamma.
\end{equation}
Decompose the conjugate vector field into tangent and normal parts to $f^*$: 
\(
X^*=df^*(Y)+u^*N^*.
\)
By definition of ${\bf a}_\varepsilon$, we have $\left\langle f_\varepsilon^*(\gamma ),{\bf a}_\varepsilon \right\rangle
=
\left\langle f_\varepsilon^*(\gamma ),d(f_\varepsilon^*)_\gamma (\eta_\varepsilon) \right\rangle
=
0$.  Differentiating at $\varepsilon=0$ gives
\(
\langle X^*,{\bf a}\rangle
+
\langle f^*(\gamma ),\dot{\bf a}\rangle
=
0.
\)
By \eqref{eq:adot-zero}, the second term vanishes, and therefore
$\langle X^*,{\bf a}\rangle=0$. Since ${\bf a}=df_\gamma ^*(\eta)$ and $N^*$ is orthogonal to $df^*(T\Sigma)$, it follows that
\begin{equation}\label{eq:Y-eta-zero}
0=\langle X^*,{\bf a}\rangle
=
\langle df^*(Y)+u^*N^*,df_\gamma ^*(\eta)\rangle
=
g(Y,\eta).
\end{equation}

On the other hand, differentiating ${\bf a}_\varepsilon
=d(f_\varepsilon^*)_\gamma (\eta_\varepsilon)$ at $\varepsilon=0$ yields
\[
\dot{\bf a}
=\overline{\nabla}_{df^*_\gamma (\eta)}X^*+df^*_\gamma (\dot{\eta})
=\nabla^{\mathbb S^3}_{df^*_\gamma (\eta)}X^*
-\langle df^*_\gamma (\eta),X^*\rangle f^*(\gamma )
+df^*_\gamma (\dot\eta).
\]
Taking the scalar product with $N^*\circ \gamma $, and using
\eqref{eq:adot-zero}, we obtain
\begin{equation}\label{eq:normal-derivative-X-star}
\left\langle
\nabla^{\mathbb S^3}_{df^*_\gamma (\eta)}X^*,
N^*
\right\rangle
=
0.
\end{equation}

Finally, differentiating $u^*=\langle X^*,N^*\rangle$ in the direction $\eta$, and using \eqref{eq:normal-derivative-X-star} together with the Weingarten equation, gives
\[
\partial_\eta u^*
=	\left\langle
\nabla^{\mathbb S^3}_{df^*_\gamma (\eta)}X^*,
N^*
\right\rangle
+
\left\langle
X^*,
\nabla^{\mathbb S^3}_{df^*_\gamma (\eta)}N^*
\right\rangle\\
=
-\langle Y,A^*\eta\rangle,
\]
where $A^*$ is the shape operator of $f^*$. Since the reflection curve $f^*(\gamma)$ is a curvature line of $f^*$, $A^*\eta$ is a multiple of $\eta$. This fact together with~\eqref{eq:Y-eta-zero} imply that 
$\partial_\eta u^*=0$ along $\gamma$, which completes the proof.
\end{proof}

\begin{remark}\label{rem:D-to-N-gauge}
{\rm 
The conclusion of Theorem~\ref{thm:D-to-N} is gauge-dependent. Keeping the conjugate immersion $f^*$ fixed, one can choose the initial conditions of the linearized system so that $\dot{\bf a}=0$ along the chosen arc. Thus the conormal of the reflection sphere is fixed to first order. This statement concerns the first jet of the variation: the immersions $f^*_\varepsilon$ need not have reflection symmetry for $\varepsilon\ne0$. An arbitrary normalization of the linearized system need not give zero Neumann data.
}
\end{remark}

\subsection{From Neumann to Dirichlet data}
\label{sec:N-to-D}
\begin{theorem}\label{thm:N-to-D}
Let $f\colon\Sigma\to\esf^3$ be a simply connected minimal immersion
and let $\gamma\subset\Sigma$ be a reflection geodesic. If $u$ is a
Jacobi function satisfying $\partial_\eta u=0$  along $\gamma$, then, for any initially chosen conjugate $u^*$, there exists
$\Lambda\in\mathfrak{so}(4)$ such that
\begin{equation}\label{eq:N-to-D-conclusion}
 (u^*-u_\Lambda^*)|_\gamma=0,
 \qquad \mbox{where } u_\Lambda^*=\langle\Lambda f^*,N^*\rangle.
\end{equation}

More precisely, if we write the conjugate great circle as
\[
 f^*(\gamma (t))=\cos t\,{\bf p}+\sin t\,{\bf v},
 \qquad
 \Pi=\operatorname{Span}\{{\bf p},{\bf v}\},
\]
then there exists $\Lambda_0\in\mathfrak{so}(4)$, given by~\eqref{eq:Lambda-zero} below, such that every matrix $\Lambda$ in the affine
two-plane
\begin{equation}\label{eq:affine-gauge-space}
\Lambda_0+\mathfrak{so}(\Pi)\oplus\mathfrak{so}(\Pi^\perp)
\end{equation}
satisfies \eqref{eq:N-to-D-conclusion}.
\end{theorem}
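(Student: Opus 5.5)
The strategy is to mirror the proof of Theorem~\ref{thm:D-to-N} with the roles of $f$ and $f^*$ exchanged. By the converse part of Lemma~\ref{lem8.14}, $f(\gamma)$ lies in a totally geodesic sphere $\esf^3\cap\langle{\bf a}\rangle^\perp$, with $df_\gamma(\eta)={\bf a}$ constant, and $f^*(\gamma)$ is the great circle $\cos t\,{\bf p}+\sin t\,{\bf v}$. Fix any normal variation $f_s$ with field $uN$, the first jets of Section~\ref{secJacconj}, and the actual deformation $f^*_\varepsilon$ from~\eqref{fvarepsilon*}.

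\textbf{Step 1: first-order invariants on the $f$ side.} The Neumann hypothesis $\partial_\eta u=0$ should force $\dot{\bf a}$-type quantities on the $f$ side to be tame. Concretely, I would show that ${\bf b}^*_\varepsilon:=\theta_\varepsilon(\gamma')$ satisfies
\[
\frac{d}{dt}\,\dot{\bf b}^*=0 \quad\text{along }\gamma,
\]
up to a term that is itself the derivative of a Killing contribution. By \eqref{thetavarepsilon},
\[
\theta_\varepsilon(\gamma')=R_\varepsilon^\top\alpha_\varepsilon(J_\varepsilon\gamma')=R_\varepsilon^\top\alpha_\varepsilon(\eta_\varepsilon)+\mathcal O(\varepsilon^2),
\]
so this reduces to controlling $\alpha_\varepsilon(\eta_\varepsilon)$, $\dot R=R\Omega$ along $\gamma$, and $\dot J$. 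Here I expect to use four facts:
\begin{itemize}
\item $\dot\alpha(\eta)=-u\,\alpha(J\eta)+f^{-1}\cdot\bigl(\partial_\eta u\,N+u\,dN(\eta)\bigr)$, where the $\partial_\eta u$ term vanishes;
\item $\dot J\gamma'=-2uJA\gamma'$, which is a multiple of $J\gamma'$ since $\gamma$ is a curvature line;
\item $f(\gamma)$ and $N$ along $\gamma$ lie in the reflection hyperplane;
\item the Jacobi equation along $\gamma$ together with $d\Omega=R^\top\dot{\mathcal A}R$.
\end{itemize}

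\textbf{Step 2: first-order great circle.} The conclusion to aim for is that the curve $t\mapsto f^*_\varepsilon(\gamma(t))$ is, to first order in $\varepsilon$, a reparametrized great circle. Equivalently, the left-trivialized tangent $(f^*_\varepsilon)^{-1}\cdot d f^*_\varepsilon(\gamma')$ should be constant in $t$ up to $\mathcal O(\varepsilon^2)$, after rescaling by the first-order length factor. The rescaling is harmless, since $\dot g(\gamma',\gamma')=-2u\sigma(\gamma',\gamma')$ and $\sigma(\gamma',\gamma')$ need not vanish. I would then use $X^*=f^*\cdot q$ and $\dot\theta=D_{\alpha^*}q=\dot\alpha^*$ to translate this into an ODE along $\gamma$ for $q$: the derivative of $q$ plus $[\alpha^*(\gamma'),q]$ equals a constant (times a scalar function) plus a multiple of $\alpha^*(\gamma')$. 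Integrating this linear ODE with constant coefficients along the great circle shows that $X^*|_\gamma$ is the restriction of a Killing field $\Lambda_0 f^*$, plus a field tangent to the great circle, plus a field normal to $\Pi$ generated by rotations in $\Pi^\perp$. Taking normal components gives
\[
u^*|_\gamma=\langle\Lambda_0f^*,N^*\rangle|_\gamma,
\]
which is \eqref{eq:N-to-D-conclusion} with $\Lambda=\Lambda_0$. This defines $\Lambda_0$ explicitly through the initial value of $q$ and the constants appearing in the ODE.

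\textbf{Step 3: the affine gauge space.} For the affine plane \eqref{eq:affine-gauge-space}, note the following. If $\Lambda\in\mathfrak{so}(\Pi)$, then $\Lambda f^*(\gamma)$ is tangent to the great circle, so its $N^*$-component vanishes. If $\Lambda\in\mathfrak{so}(\Pi^\perp)$, then $\Lambda f^*(\gamma)=0$, since $f^*(\gamma)\in\Pi$. In both cases $u^*_\Lambda|_\gamma=0$, so the whole affine plane works. Finally, by Lemma~\ref{lem:initial-data-gauge}, any other initial choice of $u^*$ changes it by some $u^*_{\Lambda'}$, which is absorbed by replacing $\Lambda_0$ with $\Lambda_0+\Lambda'$.

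The main obstacle is Step 1/2: verifying that the Neumann hypothesis, rather than the Dirichlet one, kills the right terms. Two points need care. First, the $\Omega$-term must be handled by differentiating $\Omega$ along $\gamma$ with $d\Omega=R^\top\dot{\mathcal A}R$ rather than assuming $\Omega$ is constant. Second, the umbilic-free curvature-line structure must be used, since $\sigma(\gamma',\eta)=0$ along the reflection curve. The first thing I would try is the computation of $\dot{\mathcal A}(\gamma')$ along $\gamma$ from \eqref{eq:Adot-expanded}, checking that it lies in $\phi$ of the span of the conormal direction.
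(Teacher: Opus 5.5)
Your route is viable and genuinely different from the paper's. However, the step that carries all the content is only announced, namely that $\partial_\eta u=0$ makes $t\mapsto f^*_\varepsilon(\gamma(t))$ a great circle to first order. The intermediate facts you expect are also not the right ones.

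The check you propose first fails. Take $T=\gamma'$, $AT=\kappa T$, $A\eta=-\kappa\eta$, $N_1=f^{-1}\cdot N$ and $u'=(u\circ\gamma)'$. Then \eqref{alphadot} and \eqref{dotJ} give $V:=\dot\alpha(\eta)+\alpha(\dot JT)=u\,\alpha(T)-u\kappa\,\alpha(\eta)$; this is exactly where $\partial_\eta u=0$ enters, removing a term $(\partial_\eta u)N_1$. But $\dot{\mathcal A}(T)=\phi\bigl(u(1+\kappa)\alpha(T)+u(1-\kappa)\alpha(\eta)-u'N_1\bigr)$ has components along $\alpha(T)$ and $N_1$, so it does not lie in $\phi$ of the conormal line.

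Nor is the target ``$\frac{d}{dt}\dot{\mathbf b}^*=0$ up to a Killing contribution''. The inputs are that $\mathbf b^*=R^\top\alpha(\eta)$ is constant along $\gamma$, that $(R^\top)'=-R^\top\mathcal A(T)$, and that $\Omega'=R^\top\dot{\mathcal A}(T)R$. With these, \eqref{dotalpha*} gives
\[
\frac{d}{dt}\dot{\mathbf b}^*=R^\top\bigl(-\dot{\mathcal A}(T)\,\alpha(\eta)-\mathcal A(T)\,V+V'\bigr).
\]
The cancellation happens only among all three terms. It uses $\alpha(T)'=\kappa N_1$ and $\alpha(\eta)'=-N_1$, which hold because $\gamma$ is a geodesic and a curvature line. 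The $N_1$- and $\alpha(T)$-components cancel, leaving $\frac{d}{dt}\dot{\mathbf b}^*=-(u\kappa)'\,\mathbf b^*$. Thus $\dot{\mathbf b}^*+u\kappa\,\mathbf b^*$ is constant, which is precisely a first-order reparametrized great circle; the factor $1-\varepsilon u\kappa$ is the $g_\varepsilon$-length of $\gamma'$. Without the Neumann condition the bracket would acquire $(\partial_\eta u)'N_1-\kappa(\partial_\eta u)\,\alpha(T)$. Neither the Jacobi equation along $\gamma$ nor an umbilic-free assumption is needed. This computation is the proof; without it your Step~2 has nothing to integrate.

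Once it is done, Step~2 goes through, but your description of its output is off: rotations in $\Pi^\perp$ annihilate $\Pi\ni f^*(\gamma)$ and play no role. Write $\dot{\mathbf b}^*=-u\kappa\,\mathbf b^*+\mathbf c$ with $\mathbf c$ constant. Use $\frac{d}{dt}\bigl(f^*q(f^*)^{-1}\bigr)=f^*\dot{\mathbf b}^*(f^*)^{-1}$ together with the constancy of $f^*\mathbf b^*(f^*)^{-1}$. Integrating, three pieces appear:
\begin{itemize}
\item the initial value gives a left Killing field;
\item the component of $\mathbf c$ orthogonal to $\mathbf b^*$ integrates to a restriction $K_{a,b}\circ f^*$;
\item the remaining terms are multiples of $(f^*\circ\gamma)'$.
\end{itemize}
Hence $X^*|_\gamma=K\circ f^*+\mu\,(f^*\circ\gamma)'$ for some Killing field $K$, and taking normal components gives \eqref{eq:N-to-D-conclusion}. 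Your Step~3 is fine.

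The paper avoids this Lie-algebraic bookkeeping. It shows $\dot{\mathbf a}=0$ for $\mathbf a_s=(df_s)_\gamma(\eta_s)$, and deduces from the Gauss formula that $\gamma$ stays a geodesic curvature line of $f_s$ to first order. It transfers this through $g^*_\varepsilon=g_\varepsilon$ and $A^*_\varepsilon=-J_\varepsilon A_\varepsilon$ to obtain a first-order geodesic. It then linearizes the geodesic equation to the Jacobi field $Z^\perp=\cos t\,A_0+\sin t\,B_0$, which yields $\Lambda_0$ explicitly. Your completed route would replace that geometric transfer by a single first integral and a constant-coefficient ODE.
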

\begin{proof}
Parametrize $\gamma $ by its arc length and choose the sign of $\eta$ so that, by the converse statement in Lemma~\ref{lem8.14}, ${\bf a}:=df_\gamma (\eta)$ is constant and $f^*(\gamma )$ is a great-circle arc. 

Choose any normal variation $f_s$ of $f$ with variational field $uN$. Let $g_s,J_s$ be the induced metric and conformal structure on $\Sigma$ by $f_s$. Put 
\begin{equation}\label{dihedron}
T=\gamma ',\qquad \tau_s:=T/|T|_{g_s},\qquad 
\eta_s:=J_s\tau_s,
\end{equation}
hence $\tau_s,\eta_s$ form a positively oriented $g_s$-orthonormal basis of $T_{\gamma}\Sigma$. The variation $f_s$ will only be used to compute first derivatives and need not consist of minimal immersions. 

Define 
\(
{\bf a}_s:=(df_s)_\gamma (\eta_s).
\)
 For $s=0$, ${\bf a}_0={\bf a}$ is
constant. Since $\gamma$ is a curvature line of $f$, we can write
$A T=\kappa T$, where $A$ is the shape operator of $f$ and $\kappa$ denotes the corresponding principal curvature. As $f$ is minimal, $A\eta=-\kappa\, \eta$.

We now compute the first variation $\dot\eta$ of $\eta_s$, which is a tangent vector field to $\Sigma$ along $\gamma$. By~\eqref{dihedron},
we have $g_s(\eta_s,T)=0$, $g_s(\eta_s,\eta_s)=1$. Differentiating the first identity at $s=0$,
\begin{eqnarray}
0 & = &
\dot g(\eta,T)+g(\dot\eta,T)\stackrel{\eqref{dotg}}{=}
-2u\,\sigma(\eta,T)+g(\dot\eta,T)
=
-2u\,g(A\eta,T)+g(\dot\eta,T) \nonumber
\\
&=&
2u\kappa\,g(\eta,T)+g(\dot\eta,T)
=
g(\dot\eta,T). \label{eq153}
\end{eqnarray}
Since at $s=0$, the vector fields $T,\eta$ form a $g$-orthonormal basis of $T\Sigma$ along $\gamma $, \eqref{eq153} gives that $\dot{\eta}$ is everywhere proportional to $\eta$. To compute this proportionality factor, we differentiate $g_s(\eta_s,\eta_s)=1$ at $s=0$, obtaining
\[
\begin{aligned}
0&=\dot g(\eta,\eta)+2g(\dot\eta,\eta)\stackrel{\eqref{dotg}}{=}2\bigl[-u\,\sigma(\eta,\eta)+g(\dot\eta,\eta)\bigr]\\
 &=2\bigl[-u\,g(A\eta,\eta)+g(\dot\eta,\eta)\bigr]=2\bigl[u\kappa+g(\dot\eta,\eta)\bigr].
\end{aligned}
\]
Therefore, $g(\dot\eta,\eta)=-u\kappa$ and thus, $\dot\eta=-u\kappa\eta$. 
Consequently, the first variation of ${\bf a}_s=(df_s)_\gamma (\eta_s)$ is
\[
\dot{\bf a}
 =
 \overline{\nabla}_{df_\gamma (\eta)}(uN)+df_\gamma (\dot\eta)
 =
 (\partial_\eta u)(N\circ \gamma )
 +
u \overline{\nabla}_{df_\gamma (\eta)}N-u\kappa \, df_\gamma (\eta).
\]
The first term vanishes since we are assuming that $u$ has zero Neumann data along $\gamma$. The other two terms cancel since
\[
\overline{\nabla}_{df_\gamma (\eta)}N-\kappa \, df_\gamma (\eta)
=-df_\gamma (A\eta)-\kappa\, df_\gamma (\eta)=-df_\gamma (A\eta+\kappa \, \eta)=0.
\]
Therefore,
\begin{equation}\label{eq:adot-Neumann}
	\dot{\bf a}=0 \qquad \mbox{along $\gamma$.}
\end{equation}

From now on, choose initial values $\Omega(p_0)\in\mathfrak{so}(3)$ for~\eqref{eq:Omega-IVP} and $q(p_0)\in\Im(\mathbb H)$ for~\eqref{eq:q-linearized-system}.
Let $X^*$ be the vector field constructed in
Theorem~\ref{propos8.9}, and let $u^*:=\langle X^*,N^*\rangle$
be its normal component, which is a Jacobi function for $f^*$.
\par\vspace{.2cm}\noindent
{\bf Claim.} 
{\em The normal component $Z^{\perp}$ of $Z(t):=X_\gamma ^*(t)$
is a Jacobi field along the great circle $f^*(\gamma )$.
}
\par\noindent
{\em Proof of the claim.}
Let $\nabla^s$ denote the Levi--Civita connection of $g_s$, and
let $\kappa_{g,s}$ be the geodesic curvature of $\gamma $ with respect
to $g_s$, with the convention
\begin{equation}\label{eq:geodesic-curvature-convention}
\nabla^s_{\tau_s}\tau_s
=
\kappa_{g,s}\, \eta_s.
\end{equation}
Since $\nabla^s J_s=0$, we have 
\begin{equation}\label{eq:conormal-derivative}
\nabla^s_{\tau_s}\eta_s
=
\nabla^s_{\tau_s}(J_s\tau_s)
=
J_s\nabla^s_{\tau_s}\tau_s
\stackrel{\eqref{eq:geodesic-curvature-convention}}{=} 
J_s(\kappa_{g,s}\, \eta_s)
=
\kappa_{g,s}\, J_s\eta_s
=
-\kappa_{g,s}\, \tau_s.
\end{equation}

We now differentiate ${\bf a}_s=(df_s)_\gamma (\eta_s)$ 
along the curve in the direction $\tau_s$. The Gauss formula for
the immersion $f_s\colon(\Sigma,g_s)\to\mathbb S^3$ gives
\begin{eqnarray*}
\nabla^{\mathbb S^3}_{df_s(\tau_s)}{\bf a}_s
&=&
\nabla^{\mathbb S^3}_{(df_s)_\gamma (\tau_s)}
(df_s)_\gamma (\eta_s)
=
(df_s)_\gamma \bigl(\nabla^s_{\tau_s}\eta_s\bigr)
+
\sigma_s(\tau_s,\eta_s)N_s
\\
&\stackrel{\eqref{eq:conormal-derivative}}{=}&
-\kappa_{g,s}\,(df_s)_\gamma (\tau_s)
+
\sigma_s(\tau_s,\eta_s)N_s.
\end{eqnarray*}
Since $\langle (df_s)_\gamma (\tau_s),(df_s)_\gamma (\eta_s)\rangle =
g_s(\tau_s,\eta_s)=0$, the Gauss formula for $\mathbb S^3\subset\mathbb R^4$ gives
\[
\overline{\nabla}_{(df_s)_\gamma (\tau_s)}{\bf a}_s
=
\nabla^{\mathbb S^3}_{(df_s)_\gamma (\tau_s)}{\bf a}_s
-
\langle (df_s)_\gamma (\tau_s),(df_s)_\gamma (\eta_s)\rangle (f_s\circ \gamma )
=
\nabla^{\mathbb S^3}_{(df_s)_\gamma (\tau_s)}{\bf a}_s,
\]
where $\sigma_s$ is the second fundamental form of $f_s$ with respect to the unit normal $N_s$. Hence, 
\begin{equation}\label{eq:Gauss-a-s}
\overline{\nabla}_{(df_s)_\gamma (\tau_s)}{\bf a}_s
=
-\kappa_{g,s}\,(df_s)_\gamma (\tau_s)
+
\sigma_s(\tau_s,\eta_s)N_s.
\end{equation}

 At $s=0$, ${\bf a}_0={\bf a}$ is constant along $\gamma $, and
\eqref{eq:adot-Neumann} says that its first variation also vanishes
there. Therefore, $\overline{\nabla}_{df_s(\tau_s)}{\bf a}_s=\mathcal O(s^2)$.
Since the two terms on the right-hand side of
\eqref{eq:Gauss-a-s} are mutually orthogonal, we conclude that
\begin{equation}\label{eq:boundary-coefficients-first-order}
\kappa_{g,s}=\mathcal O(s^2),
\qquad
\sigma_s(\tau_s,\eta_s)=\mathcal O(s^2).
\end{equation}

We now pass to the first-order conjugate data. 
Again, we will use first-order jets, hence the formula~\eqref{fvarepsilon*} for the variation $f_{\varepsilon}^*$ is not relevant in what follows, but only its first-order jet.
Let $g_\varepsilon^*$ denote the metric induced by
$f_\varepsilon^*$. For any $X,Y\in T\Sigma$ we have
\[
g_\varepsilon^*(X,Y)
=
\left\langle
df_\varepsilon^*(X),df_\varepsilon^*(Y)
\right\rangle
\stackrel{\eqref{MCfepsilon*}}{=}
\left\langle
\theta_\varepsilon(X),\theta_\varepsilon(Y)
\right\rangle.
\]
Hence, by
\eqref{thetavarepsilon},
\eqref{eq:R-epsilon-orthogonal},
\eqref{eq:Jepsilon-compatible}, and
\eqref{eq:alphaepsilon-isometric}, we deduce
\begin{equation}\label{eq:metric-comparison-star}
g_\varepsilon^*
=
g_\varepsilon+\mathcal O(\varepsilon^2).
\end{equation}
Here and below, the comparison~\eqref{eq:metric-comparison-star}
is understood in $C^1$ on compact subsets of $\Sigma$. Hence the corresponding Levi--Civita connections of $g_\varepsilon,g_\varepsilon^*$, and therefore their geodesic curvatures along $\gamma $, agree up to second order in $\varepsilon$:
\begin{equation}\label{eq:star-geodesic-curvature}
\kappa_{g,\varepsilon}^*=\kappa_{g,\varepsilon}
+\mathcal O(\varepsilon^2)
\stackrel{\eqref{eq:boundary-coefficients-first-order}}{=}
\mathcal O(\varepsilon^2).
\end{equation}
Now,
\begin{eqnarray}
\sigma_\varepsilon^*(\tau_\varepsilon,\tau_\varepsilon)
&=&
g_\varepsilon^*(A_\varepsilon^*\tau_\varepsilon,\tau_\varepsilon)
\stackrel{\eqref{eq:shape-comparison-star}}{=}
g_\varepsilon^*(
-J_\varepsilon A_\varepsilon\tau_\varepsilon,
\tau_\varepsilon )
+\mathcal O(\varepsilon^2) 
\stackrel{\eqref{eq:metric-comparison-star}}{=}
g_\varepsilon(
-J_\varepsilon A_\varepsilon\tau_\varepsilon,
\tau_\varepsilon )
+\mathcal O(\varepsilon^2)  \nonumber
\\
&\stackrel{\eqref{eq:Jepsilon-skew-adjoint}}{=}&
g_\varepsilon (
A_\varepsilon\tau_\varepsilon,
J_\varepsilon\tau_\varepsilon )
+\mathcal O(\varepsilon^2) 
\stackrel{\eqref{dihedron}}{=}
\sigma_\varepsilon(\tau_\varepsilon,\eta_\varepsilon)
+\mathcal O(\varepsilon^2)
\stackrel{\eqref{eq:boundary-coefficients-first-order}}{=}
\mathcal O(\varepsilon^2),  \label{eq:star-normal-curvature-first-order}
\end{eqnarray}
Next we compute the covariant acceleration
of the curve $f_\varepsilon^*(\gamma )$ with respect to $g_\varepsilon^*$.
To do this, let $\tau_\varepsilon^*:=T/|T|_{g_\varepsilon^*}$, 
$\eta_\varepsilon^*:=J_\varepsilon^*\tau_\varepsilon^*$, be the unit tangent vector 
to $\gamma $ and the corresponding unit conormal 
with respect to $g_\varepsilon^*$. Observe that \eqref{eq:metric-comparison-star} implies that 
\begin{equation}\label{eq:star-unit-frame-comparison}
	\tau_\varepsilon^*
	=
	\tau_\varepsilon+\mathcal O(\varepsilon^2),
	\qquad
	\eta_\varepsilon^*
	=
	\eta_\varepsilon+\mathcal O(\varepsilon^2).
\end{equation}

The Gauss formula for the immersion
$f_\varepsilon^*\colon(\Sigma,g_\varepsilon^*)\to\mathbb S^3$
gives
\[
\nabla^{\mathbb S^3}_{
(df_\varepsilon^*)_\gamma (\tau_\varepsilon^*)}
(df_\varepsilon^*)_\gamma (\tau_\varepsilon^*)
=
(df_\varepsilon^*)_\gamma 
\left(
\nabla^{g_\varepsilon^*}_{\tau_\varepsilon^*}
\tau_\varepsilon^*
\right)
+
\sigma_\varepsilon^*
(\tau_\varepsilon^*,\tau_\varepsilon^*)N_\varepsilon^*
=
\kappa_{g,\varepsilon}^*\,
df_\varepsilon^*(\eta_\varepsilon^*)
+
\sigma_\varepsilon^*
(\tau_\varepsilon^*,\tau_\varepsilon^*)N_\varepsilon^*.
\]
Since both $df_\varepsilon^*(\eta_\varepsilon^*)$ and
$N_\varepsilon^*$ remain bounded as $\varepsilon\to 0$, 
\eqref{eq:star-geodesic-curvature}, \eqref{eq:star-normal-curvature-first-order}, and~\eqref{eq:star-unit-frame-comparison} imply
\begin{equation}\label{eq:formal-geodesic-first-order-unit}
\nabla^{\mathbb S^3}_{
(df_\varepsilon^*)_\gamma (\tau_\varepsilon^*)}
(df_\varepsilon^*)_\gamma (\tau_\varepsilon^*)
=
\mathcal O(\varepsilon^2),
\end{equation}
which is the desired expression of the covariant acceleration
of $f_\varepsilon^*(\gamma )$ with respect to $g_\varepsilon^*$.
Now~\eqref{eq:star-unit-frame-comparison} and~\eqref{eq:formal-geodesic-first-order-unit} ensure that
\begin{equation}\label{eq:formal-geodesic-first-order}
\nabla^{\mathbb S^3}_{
(df_\varepsilon^*)_\gamma (\tau_\varepsilon)}
(df_\varepsilon^*)_\gamma (\tau_\varepsilon)
=
\mathcal O(\varepsilon^2).
\end{equation}

Consider the two-parameter smooth map $F(\varepsilon,t):=f_\varepsilon^*(\gamma (t))$, which is a variation of the 
unit-speed great circle $\Gamma^*(t):=F(0,t)=f^*(\gamma (t))$ with variational field 
\[
Z(t):=\left.\partial_\varepsilon F(\varepsilon,t)\right|_{\varepsilon=0}
=X^*_{\gamma (t)}.
\]
Denote by $W_\varepsilon(t):=\partial_tF(\varepsilon,t)$ the partial derivative of $F$ tangent to each $\varepsilon$-curve. Thus,
\begin{equation}\label{W0}
W_0(t)=\frac{\partial F}{\partial t}(0,t)=df^*_{\gamma (t)}(\gamma '(t))
=(\Gamma^*)'(t).
\end{equation}
Since $\tau_\varepsilon$ is the $g_\varepsilon$-unit tangent
vector to $\gamma $, there exists a positive function $\lambda_\varepsilon=\lambda_\varepsilon(t)$ such that
$\tau_\varepsilon=\lambda_\varepsilon \gamma '$ and thus,
$(df_\varepsilon^*)_\gamma (\tau_\varepsilon)
=\lambda_\varepsilon W_\varepsilon$. Moreover,
$\lambda_0$ is constant equal to one.

Since $\Gamma^*$ is a great circle parameterized by arc length, we have 
\begin{equation}\label{gamma*geod}
\nabla^{\mathbb S^3}_{(\Gamma^*)'} (\Gamma^*)'=0.
\end{equation}
Equation~\eqref{eq:formal-geodesic-first-order} now reads
\[
\mathcal O(\varepsilon^2)
=\nabla^{\mathbb S^3}_{\lambda_\varepsilon W_\varepsilon}
(\lambda_\varepsilon W_\varepsilon)
=
\lambda_\varepsilon W_\varepsilon(\lambda_\varepsilon)W_\varepsilon
+
\lambda_\varepsilon^2
\nabla^{\mathbb S^3}_{W_\varepsilon}W_\varepsilon
=
\lambda_\varepsilon \frac{\partial \lambda_\varepsilon}{\partial t} W_\varepsilon
+
\lambda_\varepsilon^2
\nabla^{\mathbb S^3}_{W_\varepsilon}W_\varepsilon.
\]
Differentiating the last equality at $\varepsilon=0$ gives
\begin{eqnarray}
0&=&
\dot{\lambda}\frac{\partial \lambda_0}{\partial t}W_0
+
\lambda_0\frac{\partial \dot\lambda}{\partial t}W_0
+
\lambda_0\frac{\partial \lambda_0}{\partial t}\frac{\partial W_\varepsilon}{\partial \varepsilon}(0,t)
+
2\lambda_0 \dot\lambda \nabla_{W_0}^{\mathbb{S}^3}W_0
+
\lambda_0^2 \left. \frac{\partial}{\partial \varepsilon}\right|_{\varepsilon=0}
\nabla_{W_\varepsilon}^{\mathbb{S}^3}W_\varepsilon \nonumber
\\
&\stackrel{(\star)}{=}&
\frac{\partial \dot\lambda}{\partial t}W_0
+
2\dot\lambda \nabla_{W_0}^{\mathbb{S}^3}W_0
+
\left. \frac{\partial}{\partial \varepsilon}\right|_{\varepsilon=0}
\nabla_{W_\varepsilon}^{\mathbb{S}^3}W_\varepsilon \nonumber
\\
&\stackrel{\eqref{W0}}{=}&
\frac{\partial \dot\lambda}{\partial t}(\Gamma ^*)'
+
2\dot\lambda \nabla_{(\Gamma ^*)'}^{\mathbb{S}^3}(\Gamma ^*)'
+
\left. \frac{\partial}{\partial \varepsilon}\right|_{\varepsilon=0}
\nabla_{W_\varepsilon}^{\mathbb{S}^3}W_\varepsilon 
\stackrel{\eqref{gamma*geod}}{=}
\frac{\partial \dot\lambda}{\partial t}(\Gamma ^*)'
+
\left. \frac{\partial}{\partial \varepsilon}\right|_{\varepsilon=0}
\nabla_{W_\varepsilon}^{\mathbb{S}^3}W_\varepsilon,\qquad 
\label{eq:linearized-geodesic-equation}
\end{eqnarray}
where in $(\star)$ we have used that $\lambda_0$ is constant one.

In the computations below, covariant derivatives in the direction
$Z$ are understood through the parameterized surface $F$; more precisely, if $V_\varepsilon(t)$ is a smooth vector field
along the longitudinal curves $F(\varepsilon,\cdot)$, we use the notation:
\[
\nabla_Z^{\mathbb S^3}V_0
:=
\left.
\nabla_{\partial_\varepsilon F}^{\mathbb S^3}
V_\varepsilon
\right|_{\varepsilon=0}.
\]
In particular, $W_\varepsilon=\partial_tF$ provides the extension
through $F$ of $W_0=(\Gamma ^*)'$, and hence
\[
\nabla_Z^{\mathbb S^3}(\Gamma ^*)'
:=
\left.
\nabla_{\partial_\varepsilon F}^{\mathbb S^3}
W_\varepsilon
\right|_{\varepsilon=0}.
\]
Thus, every covariant derivative in the $Z$-direction appearing
below is well defined, and no arbitrary extension of $Z$ or $(\Gamma ^*)'$ to a neighborhood in $\mathbb S^3$ is involved.

By the Gauss formula of $\mathbb{S}^3$ in $\mathbb{R}^4$, the last term in~\eqref{eq:linearized-geodesic-equation} can be written as follows:
\[
\left. \frac{\partial}{\partial \varepsilon}\right|_{\varepsilon=0}
\nabla_{W_\varepsilon}^{\mathbb{S}^3}W_\varepsilon
=
\left. \overline{\nabla}_Z \nabla_{W_\varepsilon}^{\mathbb{S}^3}W_\varepsilon
\right|_{\varepsilon=0}
=
\nabla_{Z}^{\mathbb{S}^3} \nabla_{(\Gamma ^*)'}^{\mathbb{S}^3}(\Gamma ^*)'
-
\langle Z,\nabla^{\mathbb S^3}_{(\Gamma ^*)'} (\Gamma ^*)'\rangle f^*
\stackrel{\eqref{gamma*geod}}{=}
\nabla_{Z}^{\mathbb{S}^3} \nabla_{(\Gamma ^*)'}^{\mathbb{S}^3}(\Gamma ^*)'.
\]
The usual commutation formula for first and second covariant derivatives in a parameterized surface allows us to write
\begin{eqnarray*}
\nabla_{Z}^{\mathbb{S}^3}(\Gamma ^*)'
&=&
\nabla_{(\Gamma ^*)'}^{\mathbb{S}^3} Z,
\\
\nabla_{Z}^{\mathbb{S}^3} \nabla_{(\Gamma ^*)'}^{\mathbb{S}^3}(\Gamma ^*)'
&=&
\nabla_{(\Gamma ^*)'}^{\mathbb{S}^3} \nabla_{Z}^{\mathbb{S}^3}(\Gamma ^*)'
+
R^{\mathbb S^3}(Z,(\Gamma ^*)')(\Gamma ^*)',
\end{eqnarray*}
where $R^{\mathbb S^3}$ stands for the Riemannian curvature tensor of $\mathbb{S}^3$. Hence,
\[
R^{\mathbb S^3}(Z,(\Gamma ^*)')(\Gamma ^*)'
=
|(\Gamma ^*)'|^2 Z-\langle Z,(\Gamma ^*)'\rangle (\Gamma ^*)'
=
Z-\langle Z,(\Gamma ^*)'\rangle (\Gamma ^*)',
\]
and \eqref{eq:linearized-geodesic-equation} becomes
\begin{equation}\label{eq:linearized-geodesic-equation-expanded}
0=\frac{\partial \dot\lambda}{\partial t}(\Gamma ^*)'
+
\nabla_{(\Gamma ^*)'}^{\mathbb S^3}\nabla_{(\Gamma ^*)'}^{\mathbb S^3}Z
+
Z-\langle Z,(\Gamma ^*)'\rangle (\Gamma ^*)'.
\end{equation}
The first and fourth terms in~\eqref{eq:linearized-geodesic-equation-expanded}
are tangent to the great circle $f^*(\gamma )$, hence they disappear after projecting onto the normal bundle of $\Gamma ^*$ in $\mathbb S^3$:
\begin{equation*}
0=
\left( \nabla_{(\Gamma ^*)'}^{\mathbb S^3}\nabla_{(\Gamma ^*)'}^{\mathbb S^3}Z
+Z\right)^{\perp}
=
\nabla_{(\Gamma ^*)'}^{\mathbb S^3}\nabla_{(\Gamma ^*)'}^{\mathbb S^3}Z^{\perp}
+Z^\perp.
\end{equation*}
This is precisely the Jacobi equation for $Z^{\perp}$ along the geodesic
$\Gamma ^*=f^*(\gamma )$, which finishes the proof of the claim.
\par\vspace{.2cm}
We continue with the proof of Theorem~\ref{thm:N-to-D}. Recall that 
$\Pi:=\operatorname{Span}\{{\bf p},{\bf v}\}$. Thus, the normal bundle in $\mathbb{S}^3$ of the great circle $\Gamma ^*$ is the constant plane $\Pi^{\perp}$. Since $Z^{\perp}$ is a Jacobi field along $\Gamma ^*$, 
there exist vectors $A_0,B_0\in\Pi^\perp$ such that
\begin{equation}\label{eq:Z-perp}
Z^\perp(t)=\cos t\,A_0+\sin t\,B_0.
\end{equation}
Define $\Lambda_0\in\mathfrak{so}(4)$ by
\begin{equation}\label{eq:Lambda-zero}
 \Lambda_0x=
 \langle{\bf p},x\rangle A_0-\langle A_0,x\rangle{\bf p}
 +\langle{\bf v},x\rangle B_0-\langle B_0,x\rangle{\bf v},
\end{equation}
which clearly satisfies $\Lambda_0{\bf p}=A_0$ and $\Lambda_0{\bf v}=B_0$. 
Thus, the orthogonal projection of $\Lambda_0 f^*(\gamma (t))$ on $\Pi^{\perp}$ is equal to
\[
 \pi_{\Pi^\perp}\bigl(\Lambda_0 f^*(\gamma (t))\bigr)
=
\pi_{\Pi^\perp}\bigl(\cos t\, \Lambda _0{\bf p}+\sin t\, \Lambda_0{\bf v}\bigr)
=
\cos t\,A_0+\sin t\,B_0
\stackrel{\eqref{eq:Z-perp}}{=}
Z^\perp(t).
\]
Since $N^*|_{\gamma}$ takes values in $\Pi^\perp$,
$ \pi_{\Pi^\perp}(N^*)=N^*$ along $\gamma$ and
\[
u_{\Lambda_0}^*|_\gamma
=\langle\Lambda_0 f^*,N^*\rangle|_\gamma
=
\langle \pi_{\Pi^\perp}(\Lambda_0 f^*),N^*\rangle |_\gamma
=
\langle Z^\perp,N^*|_\gamma\rangle
 =\langle X^*,N^*\rangle|_\gamma
 =u^*|_\gamma,
\]
which proves~\eqref{eq:N-to-D-conclusion}.

Finally, every element of $\mathfrak{so}(\Pi)\oplus\mathfrak{so}(\Pi^\perp)$
preserves the splitting $\mathbb R^4=\Pi\oplus\Pi^\perp$. Since
$f^*(\gamma (t))\in\Pi$ and $N^*(\gamma (t))\in\Pi^\perp$, adding such an
element to $\Lambda_0$ does not change the function
$\langle\Lambda_0 f^*,N^*\rangle|_\gamma$.
Thus every matrix in the affine two-plane
\eqref{eq:affine-gauge-space} satisfies
\eqref{eq:N-to-D-conclusion}.
\end{proof}

\begin{remark}\label{rem:scalar-boundary-stabilizer}
{\rm The affine two-plane in~\eqref{eq:affine-gauge-space} is a sufficient family of corrections. If
\[
\mathcal T_\gamma\colon\mathfrak{so}(4)\to  C^\infty(\gamma),
\qquad
\mathcal T_\gamma(\Lambda)=\langle\Lambda f^*,N^*\rangle|_\gamma,
\]
then the complete set of solutions of~\eqref{eq:N-to-D-conclusion} is $\Lambda_0+\ker\mathcal T_\gamma$. The proof gives
$\mathfrak{so}(\Pi)\oplus\mathfrak{so}(\Pi^\perp)\subseteq\ker\mathcal T_\gamma$, but does not assert equality.}
\end{remark}

\section{Extension to the associated family}
\label{sec:associated-family}
The conjugate immersion is the member of the associated family corresponding to a rotation by $\pi/2$ of the second fundamental form. In this section we first show that the Frobenius formulation of Section~\ref{secconjminS3}, as well as its linearization in Section~\ref{secJacconj}, extends to every member of the associated family. We keep the notation of those sections. Thus, $f\colon(\Sigma,g)\to\mathbb S^3$ is an oriented immersion, $\alpha=f^{-1}\cdot df$, 
$N_1=f^{-1}\cdot N$, and $\phi(v)w=v\times w$ for $v,w\in\Im(\mathbb H)$.

For $\theta\in\mathbb R$, set $P_\theta:=\cos\theta\,I+\sin\theta\,J$. Since $J^2=-I$, the endomorphism $P_\theta$ is the rotation through the angle $\theta$ in every oriented tangent plane, and $P_\theta^{-1}=P_{-\theta}$. 

Define the $\Im(\mathbb H)$-valued one-form $\beta_\theta$ and the $\mathfrak{so}(3)$-valued one-form $\mathcal A_\theta$ by
\begin{equation*}
\beta_\theta:=\alpha\circ P_\theta,
\qquad
\mathcal A_\theta
:=
\phi\circ\alpha\circ(P_\theta-I).
\end{equation*}
For $\theta=\pi/2$, these are precisely $\beta_{\pi/2}=\alpha\circ J$, $\mathcal A_{\pi/2}=\phi\circ\alpha\circ(J-I)$, which are the forms used in Section~\ref{secconjminS3}.

\begin{lemma}\label{lem:associated-curvature}
Let $f\colon(\Sigma,g)\to\mathbb S^3$ be an oriented immersion, not necessarily minimal, and let $H$ be its mean curvature function. Then,
\begin{eqnarray}
d\mathcal A_\theta-
\mathcal A_\theta\wedge\mathcal A_\theta
&=&
-2H\sin\theta\,\phi(N_1)\,da, \label{eq:curvature-A-theta}
\\
d\beta_\theta-
\mathcal A_\theta\wedge\beta_\theta
&=&
-2(1+H\sin\theta)N_1\,da.  \label{eq:beta-theta-identity}
\end{eqnarray}
Here, as in Section~\ref{secconjminS3}, $da$ denotes the area form of $g$.
\end{lemma}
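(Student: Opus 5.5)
The plan is to repeat the pointwise computation of Lemma~\ref{lema5.3} with $J$ replaced by $P_\theta$. Since both sides of \eqref{eq:curvature-A-theta} and \eqref{eq:beta-theta-identity} are tensorial two-forms, it suffices to evaluate them on $(E_1,E_2)$ at an arbitrary point $p$. Here $\{E_1,E_2=JE_1\}$ is a positively oriented orthonormal frame with $\nabla_{E_i}E_j(p)=0$, so that $[E_1,E_2](p)=0$. As in~\eqref{ab}, I set $a=\alpha(E_1)$ and $b=\alpha(E_2)$, so that $(a,b,N_1)$ is a positively oriented orthonormal basis of $\Im(\mathbb H)$ with $a\times b=N_1$. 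I abbreviate $c=\cos\theta$ and $s=\sin\theta$.

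\textbf{Step 1: $d\beta_\theta$.} By linearity of $d$ and $P_\theta=cI+sJ$, we have $d\beta_\theta=c\,d\alpha+s\,d(\alpha\circ J)$. The two formulas \eqref{dalpha} and \eqref{dalphaJ} hold globally and for arbitrary (not necessarily minimal) $f$. Combining them gives
\[
d\beta_\theta=-2(c+Hs)N_1\,da .
\]

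\textbf{Step 2: the curvature identity.} Write $w_\theta=\alpha\circ(P_\theta-I)$, so that $\mathcal A_\theta=\phi\circ w_\theta$. By \eqref{eq138''} and the linearity of $\phi$, exactly as in the proof of Lemma~\ref{lema5.3},
\[
(d\mathcal A_\theta-\mathcal A_\theta\wedge\mathcal A_\theta)(E_1,E_2)=\phi\bigl(dw_\theta(E_1,E_2)-w_\theta(E_1)\times w_\theta(E_2)\bigr).
\]
Using $JE_1=E_2$ and $JE_2=-E_1$, one gets $w_\theta(E_1)=(c-1)a+sb$ and $w_\theta(E_2)=-sa+(c-1)b$. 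Hence
\[
w_\theta(E_1)\times w_\theta(E_2)=\bigl((c-1)^2+s^2\bigr)N_1=(2-2c)N_1 .
\]
On the other hand, Step~1 together with \eqref{dalpha} gives $dw_\theta=d\beta_\theta-d\alpha=2(1-c-Hs)N_1\,da$. Subtracting, the $\theta$-dependent terms without $H$ cancel, leaving $-2Hs\,\phi(N_1)$. This proves \eqref{eq:curvature-A-theta}.

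\textbf{Step 3: the identity for $\beta_\theta$.} This is the generalization of the algebraic identity \eqref{eq:A-beta-general}. We have $\beta_\theta(E_1)=ca+sb$ and $\beta_\theta(E_2)=-sa+cb$, and $\phi(v)w=v\times w$. Therefore
\[
(\mathcal A_\theta\wedge\beta_\theta)(E_1,E_2)=w_\theta(E_1)\times\beta_\theta(E_2)-w_\theta(E_2)\times\beta_\theta(E_1).
\]
The first cross product is $(c^2-c+s^2)N_1=(1-c)N_1$, and the second is $-(1-c)N_1$. Their difference is $2(1-c)N_1$. Subtracting this from Step~1 yields
\[
-2(c+Hs)N_1-2(1-c)N_1=-2(1+Hs)N_1 ,
\]
which is \eqref{eq:beta-theta-identity}.

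There is no real obstacle here; the computation is entirely local and algebraic once \eqref{dalpha} and \eqref{dalphaJ} are available. The only point requiring care is sign bookkeeping, namely the convention $(\mathcal A\wedge\beta)(X,Y)=\mathcal A(X)\beta(Y)-\mathcal A(Y)\beta(X)$ and the orientation $a\times b=N_1$. As a sanity check, at $\theta=\pi/2$ the results should reduce to Lemma~\ref{lema5.3} and to the combination of \eqref{eq:dbeta-general} and \eqref{eq:A-beta-general}, which they do.
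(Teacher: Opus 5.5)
Your proof is correct and follows the paper's own argument: you evaluate on a positively oriented orthonormal frame, get $d\beta_\theta$ and $d\bigl(\alpha\circ(P_\theta-I)\bigr)$ from \eqref{dalpha} and \eqref{dalphaJ}, and compute $w_\theta(E_1)\times w_\theta(E_2)=2(1-c)N_1$ and $(\mathcal A_\theta\wedge\beta_\theta)(E_1,E_2)=2(1-c)N_1$. Your explicit expansion of the second cross product supplies the step the paper only calls a direct calculation, and the geodesic-frame normalization is harmless but unnecessary, since you only use the global identities.
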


\begin{proof}
Write $c=\cos\theta$ and $s=\sin\theta$, and choose a positively oriented local 
orthonormal frame $\{E_1,E_2=JE_1\}$. Define the smooth functions $a,b$ as 
in~\eqref{ab}, so that $a\times b=N_1$. Then,
\begin{equation}\label{eq:d-Q-theta}
d\bigl(\alpha\circ(P_\theta-I)\bigr) 
=(c-1)d\alpha+s\,d(\alpha\circ J)
\stackrel{\eqref{dalpha},\eqref{dalphaJ}}{=}
2(1-c-Hs)N_1\,da.
\end{equation}
Moreover, $\alpha((P_\theta-I)E_1)=(c-1)a+sb$ and 
$\alpha((P_\theta-I)E_2)=-sa+(c-1)b$, and hence
\begin{equation}\label{eq:cross-Q-theta}
\alpha((P_\theta-I)E_1)
\times
\alpha((P_\theta-I)E_2)
=
\bigl((c-1)^2+s^2\bigr)N_1
=
2(1-c)N_1.
\end{equation}
Using the identity
$[\phi(v),\phi(w)]=\phi(v\times w)$, equations~\eqref{eq:d-Q-theta} and~\eqref{eq:cross-Q-theta} give
\[
\bigl(d\mathcal A_\theta-
\mathcal A_\theta\wedge\mathcal A_\theta\bigr)(E_1,E_2)
=
\phi\bigl(2(1-c-Hs)N_1-2(1-c)N_1\bigr)
=-2Hs\,\phi(N_1),
\]
which proves~\eqref{eq:curvature-A-theta}.

Similarly,
\[
d\beta_\theta
=
c\,d\alpha+s\,d(\alpha\circ J)
\stackrel{\eqref{dalpha},\eqref{dalphaJ}}{=}
-2(c+Hs)N_1\,da.
\]
On the chosen frame, $\beta_\theta(E_1)=ca+sb$, $\beta_\theta(E_2)=-sa+cb$. A direct calculation, using $\phi(v)w=v\times w$, yields
\(
(\mathcal A_\theta\wedge\beta_\theta)(E_1,E_2)
=
2(1-c)N_1.
\)
Combining the last two displayed identities gives
\[
(d\beta_\theta-
\mathcal A_\theta\wedge\beta_\theta)(E_1,E_2)
=
-2(1+Hs)N_1,
\]
and proves~\eqref{eq:beta-theta-identity}.
\end{proof}

We can now recover the entire associated family by a Frobenius system.

\begin{theorem}[Frobenius formulation of the associated family]\label{thm:Frobenius-associated-family}
Let $f\colon(\Sigma,g)\to\mathbb S^3$ be a minimal immersion of a simply connected oriented surface. Fix $\theta\in\mathbb R$. For every choice of $R_\theta(p_0)\in SO(3)$, the system
\begin{equation}\label{eq:R-theta-system}
dR_\theta=\mathcal A_\theta R_\theta
\end{equation}
has a unique solution $R_\theta\colon\Sigma\to SO(3)$. The one-form
\begin{equation}\label{eq:alpha-theta}
\alpha_\theta
:=
R_\theta^{\mathsf T}\beta_\theta
=
R_\theta^{\mathsf T}\alpha\circ P_\theta
\end{equation}
satisfies
\begin{equation}\label{eq:MC-alpha-theta}
d\alpha_\theta+\alpha_\theta\wedge\alpha_\theta=0.
\end{equation}
Therefore, for every prescribed value $f_\theta(p_0)\in\mathbb S^3$, there is a unique immersion $f_\theta\colon\Sigma\to\mathbb S^3$ satisfying
\begin{equation}\label{eq:f-theta-Frobenius}
f_\theta^{-1}\cdot df_\theta=\alpha_\theta.
\end{equation}
The immersion $f_\theta$ induces the metric $g$, its left-translated unit normal is
\begin{equation}\label{eq:N-theta}
f_\theta^{-1}\cdot N_\theta
=
R_\theta^{\mathsf T}N_1,
\end{equation}
and its shape operator is
\begin{equation}\label{eq:shape-associated-family}
A_\theta
=
P_\theta^{-1}A
=
\cos\theta\,A-\sin\theta\,JA.
\end{equation}
In particular, $f_\theta$ is minimal, $|A_\theta|=|A|$, $\mathcal L_{f_\theta}=\mathcal L_f$, and, up to the choices of the initial values, $\{f_\theta\}_{\theta\in\mathbb R}$ is the associated family of
$f$, with $f_0=f$ and $f_{\pi/2}=f^*$.
\end{theorem}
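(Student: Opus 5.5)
The plan is to mimic the proof of Theorem~\ref{thm8.4}, but now working directly in $\mathbb S^3$ with the identities of Lemma~\ref{lem:associated-curvature} instead of passing through the cousin correspondence.

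\emph{Existence of $R_\theta$.} Since $f$ is minimal, \eqref{eq:curvature-A-theta} gives $d\mathcal A_\theta-\mathcal A_\theta\wedge\mathcal A_\theta=0$. Differentiating $dR_\theta=\mathcal A_\theta R_\theta$ as in the discussion after \eqref{eq151a} shows that this is exactly the compatibility condition. As $\Sigma$ is simply connected, the Frobenius theorem yields a unique global solution for each $R_\theta(p_0)$. Because $\mathcal A_\theta$ is $\mathfrak{so}(3)$-valued, $d(R_\theta^{\mathsf T}R_\theta)=0$, so $R_\theta$ stays in $SO(3)$.

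\emph{Maurer--Cartan equation for $\alpha_\theta$.} Use $dR_\theta^{\mathsf T}=-R_\theta^{\mathsf T}\mathcal A_\theta$ and the Leibniz rule:
\[
d\alpha_\theta=R_\theta^{\mathsf T}\bigl(d\beta_\theta-\mathcal A_\theta\wedge\beta_\theta\bigr)=-2R_\theta^{\mathsf T}N_1\,da,
\]
where the second equality is \eqref{eq:beta-theta-identity} with $H=0$. On the other hand, (Q6) gives $(\alpha_\theta\wedge\alpha_\theta)(E_1,E_2)=2\,\alpha_\theta(E_1)\times\alpha_\theta(E_2)$. Since $R_\theta^{\mathsf T}$ preserves cross products and $P_\theta$ is a rotation of determinant $1$,
\[
\alpha_\theta(E_1)\times\alpha_\theta(E_2)=R_\theta^{\mathsf T}(a\times b)=R_\theta^{\mathsf T}N_1 .
\]
The two terms therefore cancel, which proves \eqref{eq:MC-alpha-theta}.

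\emph{Construction of $f_\theta$.} The Maurer--Cartan equation is the integrability condition for $df_\theta=f_\theta\cdot\alpha_\theta$; compare the flatness computation in Claim~1 of Theorem~\ref{propos8.9}. So the Frobenius theorem gives a unique $\mathbb H$-valued solution $f_\theta$ with the prescribed $f_\theta(p_0)$. Since $\alpha_\theta$ is imaginary, $d|f_\theta|^2=2\langle f_\theta,f_\theta\cdot\alpha_\theta\rangle=0$, so $f_\theta$ lies in $\mathbb S^3$. Left translation is an isometry and $\alpha\circ P_\theta$ is isometric, so $f_\theta$ induces $g$ and is an immersion. The vector $R_\theta^{\mathsf T}N_1$ is a unit vector orthogonal to $\alpha_\theta(T\Sigma)$, and it has the correct orientation by the triple-product computation above; this gives \eqref{eq:N-theta}.

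\emph{Shape operator.} This is the step I expect to be the main computational point. I would repeat the Weingarten argument of Claim~4 in Theorem~\ref{propos8.9} (exactly, not to first order). The left-trivialized Weingarten equation for $f_\theta$ reads
\[
d(R_\theta^{\mathsf T}N_1)(X)+\alpha_\theta(X)\times R_\theta^{\mathsf T}N_1=-\alpha_\theta(A_\theta X).
\]
Expanding the left side gives
\[
R_\theta^{\mathsf T}\bigl[-\mathcal A_\theta(X)N_1+dN_1(X)+\alpha(P_\theta X)\times N_1\bigr].
\]
Since $\mathcal A_\theta(X)N_1=\alpha((P_\theta-I)X)\times N_1$, the bracket reduces to $dN_1(X)+\alpha(X)\times N_1$, which equals $-\alpha(AX)$ by \eqref{eq:Weingarten-fs}. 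Comparing with the right side, $-R_\theta^{\mathsf T}\alpha(P_\theta A_\theta X)$, and using injectivity of $\alpha$, gives $P_\theta A_\theta=A$. Hence $A_\theta=P_{-\theta}A=\cos\theta\,A-\sin\theta\,JA$, which is \eqref{eq:shape-associated-family}.

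\emph{Consequences.} Minimality follows because $\operatorname{trace}(JA)=0$ by the argument of \eqref{eq117}. Since $P_{-\theta}$ is orthogonal, $|A_\theta|=|A|$, so $\mathcal L_{f_\theta}=\mathcal L_f$. For $\theta=0$ we have $\mathcal A_0=0$, so $R_0$ is constant; choosing $R_0=I$ and the matching initial point gives $f_0=f$. For $\theta=\pi/2$ the system is exactly Theorem~\ref{thm8.4}, and \eqref{eq:shape-associated-family} gives $A_{\pi/2}=-JA=A^*$. Finally, the Bonnet-type uniqueness in $\mathbb S^3$ identifies $f_\theta$ with the member of the associated family determined by $(g,P_{-\theta}A)$, up to ambient isometries absorbed by the initial values.
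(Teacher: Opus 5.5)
Your proposal is correct and follows essentially the same route as the paper: flatness of $\mathcal A_\theta$ from Lemma~\ref{lem:associated-curvature} with $H=0$, then the Leibniz-rule identity $d\alpha_\theta=R_\theta^{\mathsf T}(d\beta_\theta-\mathcal A_\theta\wedge\beta_\theta)=-2R_\theta^{\mathsf T}N_1\,da$ cancelled against $\alpha_\theta\wedge\alpha_\theta=2R_\theta^{\mathsf T}N_1\,da$, and finally the exact left-trivialized Weingarten comparison giving $P_\theta A_\theta=A$. Your additional checks make explicit points the paper leaves implicit: that $R_\theta^{\mathsf T}R_\theta$ and $|f_\theta|$ are constant, and the appeal to Bonnet-type uniqueness to identify $f_\theta$ with the associated family.
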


\begin{proof}
Since $H=0$, equation~\eqref{eq:curvature-A-theta} says that $\mathcal A_\theta$ is flat. Therefore, equation~\eqref{eq:R-theta-system} is compatible and has a unique global solution on the simply connected surface $\Sigma$ once an initial value $R_\theta(p_0)\in SO(3)$ has been chosen. Since $\mathcal A_\theta$ takes values in $\mathfrak{so}(3)$, the values of such a solution $R$ remain in $SO(3)$.

From $dR_\theta^{\mathsf T}=-R_\theta^{\mathsf T}\mathcal A_\theta$, equations~\eqref{eq:alpha-theta} and~\eqref{eq:beta-theta-identity} give
\begin{equation}\label{eq:d-alpha-theta}
d\alpha_\theta
=
R_\theta^{\mathsf T}
\bigl(d\beta_\theta-
\mathcal A_\theta\wedge\beta_\theta\bigr)
=
-2R_\theta^{\mathsf T}N_1\,da.
\end{equation}
On the other hand, because both $P_\theta$ and $R_\theta$ preserve orientation and the metric,
\[
(\alpha_\theta\wedge\alpha_\theta)(E_1,E_2)
=
2\alpha_\theta(E_1)\times\alpha_\theta(E_2)
=
2R_\theta^{\mathsf T} \bigl(\alpha(P_\theta E_1)\times \alpha(P_\theta E_2)\bigr)
=
2R_\theta^{\mathsf T}N_1.
\]
Together with~\eqref{eq:d-alpha-theta}, this proves~\eqref{eq:MC-alpha-theta}. The flat system $df_\theta=f_\theta\cdot\alpha_\theta$ can therefore be integrated along paths. Since $\Sigma$ is simply connected, the result is independent of the path and gives the unique solution with prescribed value $f_\theta(p_0)\in\mathbb S^3$; see also~\cite{konu}. Moreover,
\[
\langle\alpha_\theta(X),\alpha_\theta(Y)\rangle
=
\langle\alpha(P_\theta X),\alpha(P_\theta Y)\rangle
=
g(P_\theta X,P_\theta Y)
=
g(X,Y),
\]
so $f_\theta$ is an isometric immersion. Equation~\eqref{eq:N-theta} follows immediately from orientation and orthogonality.

It remains to compute the shape operator. Put $N_{1,\theta}:=R_\theta^{\mathsf T}N_1$. Using the left-trivialized Weingarten equation~\eqref{eq:Weingarten-fs} at $s=0$, we obtain
\begin{equation*}
dN_{1,\theta}(X)+\alpha_\theta(X)\times N_{1,\theta}
=
R_\theta^{\mathsf T}
\bigl(-\mathcal A_\theta(X)N_1+dN_1(X)
+\beta_\theta(X)\times N_1\bigr).
\end{equation*}
By definition of $\mathcal A_\theta$ and the identity $\phi(v)w=v\times w$, we have
\[
-\mathcal A_\theta(X)N_1 = -\alpha\bigl((P_\theta-I)X\bigr)\times N_1.
\]
Moreover,
\(
\beta_\theta(X)
=
\alpha(P_\theta X)
=
\alpha(X)+\alpha\bigl((P_\theta-I)X\bigr),
\)
while the left-trivialized Weingarten equation for \(f\) gives
\(
dN_1(X)
=
-\alpha(AX)-\alpha(X)\times N_1.
\)
Consequently,
\begin{eqnarray*}
-\mathcal A_\theta(X)N_1+dN_1(X)+\beta_\theta(X)\times N_1
&=&
-\alpha\bigl((P_\theta-I)X\bigr)\times N_1
-\alpha(AX)-\alpha(X)\times N_1 
\\
& & 
+\alpha(X)\times N_1
+\alpha\bigl((P_\theta-I)X\bigr)\times N_1
\\
&=&
-\alpha(AX),
\end{eqnarray*}
and thus,
\begin{equation*}
dN_{1,\theta}(X)+\alpha_\theta(X)\times N_{1,\theta}
=
-R_\theta^{\mathsf T}
\alpha(AX).
\end{equation*}
Comparing the last identity with the left-trivialized Weingarten equation for $f_\theta$,
\[
dN_{1,\theta}(X)+\alpha_\theta(X)\times N_{1,\theta}
=
-\alpha_\theta(A_\theta X),
\]
and using~\eqref{eq:alpha-theta}, we find
\(
\alpha(P_\theta A_\theta X)=\alpha(AX).
\)
Since $\alpha$ is injective, $P_\theta A_\theta=A$, which proves~\eqref{eq:shape-associated-family}. Finally, $A$ is traceless and $\operatorname{trace}(JA)=0$, so $\operatorname{trace}(A_\theta)=0$. Since $P_\theta$ is an isometry, $|A_\theta|=|A|$, and therefore $\mathcal L_{f_\theta}=\mathcal L_f$. The assertions for $\theta=0$ and $\theta=\pi/2$ follow from $P_0=I$ and $P_{\pi/2}=J$, after choosing the corresponding initial conditions.
\end{proof}

\subsection{Jacobi functions along the associated family}

We now linearize Theorem~\ref{thm:Frobenius-associated-family}. Let $u$ be a Jacobi function for $f$, and choose, as in Section~\ref{secJacconj}, an arbitrary normal variation $f_s$ with variational field $uN$. The immersions $f_s$ are not assumed to be minimal for $s\ne0$. We use the notation $g_s,J_s,\alpha_s,N_{1,s},H_s$ introduced there, and put
\begin{equation*}
P_{\theta,s}:=\cos\theta\,I+\sin\theta\,J_s,
\qquad
\beta_{\theta,s}:=\alpha_s\circ P_{\theta,s},
\qquad
\mathcal A_{\theta,s}
:=
\phi\circ\alpha_s\circ(P_{\theta,s}-I).
\end{equation*}
At $s=0$, differentiation gives
\begin{equation*}
\begin{split}
\dot P_\theta&=\sin\theta\,\dot J,\\
\dot\beta_\theta
&=
\dot\alpha\circ P_\theta
+\sin\theta\,\alpha\circ\dot J,\\
\dot{\mathcal A}_\theta
&=
\phi\circ
\left(
\dot\alpha\circ(P_\theta-I)
+\sin\theta\,\alpha\circ\dot J
\right).
\end{split}
\end{equation*}

\begin{theorem}[Jacobi functions along the associated family]\label{thm:Jacobi-associated-family}
Let $f\colon(\Sigma,g)\to\mathbb S^3$ be a minimal immersion of a simply connected surface, let $u$ be a Jacobi function for $f$, and let $f_\theta$ and $R_\theta$ be given by Theorem~\ref{thm:Frobenius-associated-family}. Then:
\begin{enumerate}
\item The $\mathfrak{so}(3)$-valued one-form $R_\theta^{\mathsf T}\dot{\mathcal A}_\theta R_\theta$
is closed. Hence, for every $\Omega_{\theta,0}\in\mathfrak{so}(3)$, the system
\begin{equation}\label{eq:Omega-theta-system}
d\Omega_\theta
=
R_\theta^{\mathsf T}
\dot{\mathcal A}_\theta R_\theta,
\qquad
\Omega_\theta(p_0)=\Omega_{\theta,0},
\end{equation}
has a unique global solution on $\Sigma$.

\item The one-form
\begin{equation*}
\dot\alpha_\theta
:=
-\Omega_\theta\alpha_\theta
+R_\theta^{\mathsf T}\dot\beta_\theta
\end{equation*}
satisfies the linearized Maurer--Cartan equation
\begin{equation}\label{eq:linearized-MC-theta}
d\dot\alpha_\theta
+\alpha_\theta\wedge\dot\alpha_\theta
+\dot\alpha_\theta\wedge\alpha_\theta
=0.
\end{equation}

\item For every $q_{\theta,0}\in\Im(\mathbb H)$, the initial-value problem
\begin{equation}\label{eq:q-theta-system}
dq_\theta
+\alpha_\theta\cdot q_\theta
-q_\theta\cdot\alpha_\theta
=
\dot\alpha_\theta,
\qquad
q_\theta(p_0)=q_{\theta,0},
\end{equation}
has a unique global solution $q_\theta\colon\Sigma\to\Im(\mathbb H)$. The smooth vector field $X_\theta:=f_\theta\cdot q_\theta$ is a vector field along $f_\theta$, and its normal component $u_\theta:=\langle X_\theta,N_\theta\rangle$ is a Jacobi function for $f_\theta$.
\end{enumerate}
Changing the initial values in~\eqref{eq:Omega-theta-system} and~\eqref{eq:q-theta-system} changes $u_\theta$ by a Jacobi--Killing function of $f_\theta$. Thus, $u_\theta$ is naturally defined modulo Jacobi--Killing functions. For $\theta=\pi/2$, the construction is exactly the Jacobi conjugation of Theorem~\ref{propos8.9} and Definition~\ref{def:Jacobi-conjugate}.
\end{theorem}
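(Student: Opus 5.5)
The plan is to repeat the proof of Theorem~\ref{propos8.9} verbatim, with $J$ replaced by $P_\theta$ in every place where it enters the frame map and the Maurer--Cartan forms. Lemma~\ref{lem:associated-curvature} takes over the role of Lemma~\ref{lema5.3} and of the identities \eqref{eq:dbeta-general}--\eqref{eq:A-beta-general}. Its two identities hold for every oriented immersion, in particular for each member $f_s$ of the chosen normal variation.

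\textbf{Item 1.} Apply \eqref{eq:curvature-A-theta} to $f_s$ and differentiate at $s=0$. Since $H=0$, every term in which the derivative does not fall on $H_s$ drops out, and $2\dot H=\mathcal L_fu=0$. This gives
\[
d\dot{\mathcal A}_\theta-\dot{\mathcal A}_\theta\wedge\mathcal A_\theta-\mathcal A_\theta\wedge\dot{\mathcal A}_\theta=0.
\]
Then $dR_\theta=\mathcal A_\theta R_\theta$ and the same Leibniz computation as before give $d(R_\theta^{\mathsf T}\dot{\mathcal A}_\theta R_\theta)=0$. Simple connectivity yields $\Omega_\theta$.

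\textbf{Item 2.} Set $R_{\theta,\varepsilon}:=R_\theta(I+\varepsilon\Omega_\theta)$. Exactly as in \eqref{eq:formal-R} and \eqref{eq80}, one has $dR_{\theta,\varepsilon}=\mathcal A_{\theta,\varepsilon}R_{\theta,\varepsilon}+\mathcal O(\varepsilon^2)$, and $R_{\theta,\varepsilon}$ is orthogonal and preserves the cross product to first order. Define $\alpha_{\theta,\varepsilon}:=R_{\theta,\varepsilon}^{\mathsf T}\beta_{\theta,\varepsilon}$; its $\varepsilon$-coefficient is precisely $\dot\alpha_\theta$. Then
\[
d\alpha_{\theta,\varepsilon}=R_{\theta,\varepsilon}^{\mathsf T}\bigl(d\beta_{\theta,\varepsilon}-\mathcal A_{\theta,\varepsilon}\wedge\beta_{\theta,\varepsilon}\bigr)+\mathcal O(\varepsilon^2).
\]
By the first jet of \eqref{eq:beta-theta-identity}, using $H_\varepsilon=0$, this equals $-2R_{\theta,\varepsilon}^{\mathsf T}N_{1,\varepsilon}\,da_\varepsilon+\mathcal O(\varepsilon^2)$. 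Here one writes $\alpha_\varepsilon(E_1)\times\alpha_\varepsilon(E_2)$ for a $g_\varepsilon$-orthonormal frame, or directly as the bivector $\alpha_\varepsilon(X)\times\alpha_\varepsilon(Y)$.

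On the other hand, $\alpha_{\theta,\varepsilon}\wedge\alpha_{\theta,\varepsilon}=2\alpha_{\theta,\varepsilon}\times\alpha_{\theta,\varepsilon}$. Since $P_{\theta,\varepsilon}$ has determinant $\cos^2\theta+\sin^2\theta\det J_\varepsilon+\mathcal O(\varepsilon^2)=1+\mathcal O(\varepsilon^2)$, this equals $2R_{\theta,\varepsilon}^{\mathsf T}(\alpha_\varepsilon(X)\times\alpha_\varepsilon(Y))+\mathcal O(\varepsilon^2)$. Comparing the two sides gives the Maurer--Cartan equation up to $\mathcal O(\varepsilon^2)$, and its $\varepsilon$-coefficient is \eqref{eq:linearized-MC-theta}. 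The slight care needed is to express both sides in the same form, $\alpha_\varepsilon(X)\times\alpha_\varepsilon(Y)$, as in the $\pi/2$ case. Note that $\det P_{\theta,\varepsilon}$ uses $J\dot J+\dot JJ=0$, so $\operatorname{tr}(J^{-1}\dot J)=0$.

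\textbf{Item 3.} Solvability follows from Claims 1 and 2 of Theorem~\ref{propos8.9}: $D_{\alpha_\theta}$ is flat because $\alpha_\theta$ satisfies \eqref{eq:MC-alpha-theta}, and the compatibility condition is \eqref{eq:linearized-MC-theta}. Taking real parts shows $q_\theta$ stays imaginary. For the Jacobi property, set $f^\varepsilon_\theta:=f_\theta\exp(\varepsilon q_\theta)$, whose Maurer--Cartan form is $R_{\theta,\varepsilon}^{\mathsf T}\alpha_\varepsilon\circ P_{\theta,\varepsilon}+\mathcal O(\varepsilon^2)$. Repeat Claim 3 to get $N^\varepsilon_{1,\theta}=R_{\theta,\varepsilon}^{\mathsf T}N_{1,\varepsilon}+\mathcal O(\varepsilon^2)$. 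Then repeat the Weingarten computation, with \eqref{eq113} replaced by the $P_\theta$ cancellation already used in the proof of Theorem~\ref{thm:Frobenius-associated-family}. This yields $P_{\theta,\varepsilon}A^\varepsilon_\theta=A_\varepsilon+\mathcal O(\varepsilon^2)$, hence
\[
2H_\theta^\varepsilon=\cos\theta\operatorname{tr}A_\varepsilon-\sin\theta\operatorname{tr}(J_\varepsilon A_\varepsilon)+\mathcal O(\varepsilon^2).
\]
Here $\operatorname{tr}A_\varepsilon=2H_\varepsilon=0$ and $\operatorname{tr}(J_\varepsilon A_\varepsilon)=\mathcal O(\varepsilon^2)$ by \eqref{eq118}. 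Since $f_\theta$ is minimal, the first variation formula gives $\mathcal L_{f_\theta}u_\theta=0$.

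\textbf{Gauge and $\theta=\pi/2$.} The gauge statement is Lemma~\ref{lem:initial-data-gauge} verbatim with $f^*,\alpha^*$ replaced by $f_\theta,\alpha_\theta$, since only $D_{\alpha_\theta}$ and the constant shift $\Omega_\theta\mapsto\Omega_\theta+C$ enter. For $\theta=\pi/2$ one has $P_{\pi/2,s}=J_s$, so all the forms coincide with those of Section~\ref{secJacconj}.

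The main obstacle I expect is Item 2, in particular checking that the first jet of \eqref{eq:beta-theta-identity} and the $\det P_{\theta,\varepsilon}$ argument combine consistently. Everything else is bookkeeping already carried out in the case $\theta=\pi/2$.
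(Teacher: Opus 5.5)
Your proposal is correct and follows the paper's proof essentially step by step: linearized flatness of $\mathcal A_\theta$ from Lemma~\ref{lem:associated-curvature} applied to $f_s$; the first-order frame $R_{\theta,\varepsilon}=R_\theta(I+\varepsilon\Omega_\theta)$ together with the first jet of \eqref{eq:beta-theta-identity} for the Maurer--Cartan equation; flatness of $D_{\alpha_\theta}$ for solvability; and the Weingarten comparison giving $P_{\theta,\varepsilon}A^{(\theta)}_\varepsilon=A_\varepsilon+\mathcal O(\varepsilon^2)$. The only minor variation is in item~2, where you reduce $\beta_{\theta,\varepsilon}(X)\times\beta_{\theta,\varepsilon}(Y)$ to $\alpha_\varepsilon(X)\times\alpha_\varepsilon(Y)$ via $\det P_{\theta,\varepsilon}=1+\mathcal O(\varepsilon^2)$ (your determinant formula omits the cross term $\varepsilon\cos\theta\sin\theta\operatorname{tr}\dot J$, which vanishes because $\operatorname{tr}\dot J=0$, e.g.\ from $\dot J=J\dot J J$), whereas the paper takes first jets of the exact identity $\beta_{\theta,s}(X)\times\beta_{\theta,s}(Y)=N_{1,s}\,da_s(X,Y)$, valid because each $P_{\theta,s}$ is an orientation-preserving $g_s$-isometry.
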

\begin{proof}
Applying~\eqref{eq:curvature-A-theta} to $f_s$ gives
\[
d\mathcal A_{\theta,s}
-\mathcal A_{\theta,s}\wedge\mathcal A_{\theta,s}
=
-2H_s\sin\theta\,
\phi(N_{1,s})\,da_s.
\]
Since $H_0=0$ and $2\dot H=\mathcal L_fu$, differentiation at $s=0$ yields
\begin{equation*}
d\dot{\mathcal A}_\theta
-\dot{\mathcal A}_\theta\wedge\mathcal A_\theta
-\mathcal A_\theta\wedge\dot{\mathcal A}_\theta
=
-\sin\theta\,(\mathcal L_fu)\phi(N_1)\,da
=0.
\end{equation*}
Using $dR_\theta=\mathcal A_\theta R_\theta$ and
$dR_\theta^{\mathsf T}=-R_\theta^{\mathsf T}\mathcal A_\theta$, we therefore obtain
\[
d\left(
R_\theta^{\mathsf T}
\dot{\mathcal A}_\theta R_\theta
\right)
=
R_\theta^{\mathsf T}
\left(
d\dot{\mathcal A}_\theta
-\dot{\mathcal A}_\theta\wedge\mathcal A_\theta
-\mathcal A_\theta\wedge\dot{\mathcal A}_\theta
\right)R_\theta
=0.
\]
This proves the first assertion.

Set
\[
\dot R_\theta:=R_\theta\Omega_\theta,
\qquad
R_{\theta,\varepsilon}:=R_\theta+\varepsilon\dot R_\theta.
\]
As in equations~\eqref{eq:R-epsilon-orthogonal}--\eqref{eq80},
\begin{equation}\label{eq:R-theta-first-jet}
R_{\theta,\varepsilon}^{\mathsf T}R_{\theta,\varepsilon}
=I+\mathcal O(\varepsilon^2),
\qquad
dR_{\theta,\varepsilon}
=
\mathcal A_{\theta,\varepsilon}R_{\theta,\varepsilon}
+\mathcal O(\varepsilon^2),
\end{equation}
where $\mathcal A_{\theta,\varepsilon}:=\mathcal A_\theta+\varepsilon\dot{\mathcal A}_\theta$. If
\begin{equation}\label{eq163}
\beta_{\theta,\varepsilon}
:=
\beta_\theta+\varepsilon\dot\beta_\theta,
\qquad
\alpha_{\theta,\varepsilon}
:=
R_{\theta,\varepsilon}^{\mathsf T}
\beta_{\theta,\varepsilon},
\end{equation}
then
\(
\alpha_{\theta,\varepsilon}
=
\alpha_\theta+\varepsilon\dot\alpha_\theta
+\mathcal O(\varepsilon^2).
\)

We now verify the Maurer--Cartan equation for \(\alpha_{\theta,\varepsilon}\) to first order. On the one hand, by~\eqref{eq:R-theta-first-jet} we have
\(
dR_{\theta,\varepsilon}^{\mathsf T}
=
-
R_{\theta,\varepsilon}^{\mathsf T}
\mathcal A_{\theta,\varepsilon}
+
\mathcal O(\varepsilon^2).
\)
Consequently,
\begin{equation}
d\alpha_{\theta,\varepsilon}
\stackrel{\eqref{eq163}}{=}
dR_{\theta,\varepsilon}^{\mathsf T}
\wedge\beta_{\theta,\varepsilon}
+
R_{\theta,\varepsilon}^{\mathsf T}
d\beta_{\theta,\varepsilon}
=
R_{\theta,\varepsilon}^{\mathsf T}
\bigl(
d\beta_{\theta,\varepsilon}
-
\mathcal A_{\theta,\varepsilon}
\wedge\beta_{\theta,\varepsilon}
\bigr)
+
\mathcal O(\varepsilon^2).
\label{eq:d-alpha-theta-epsilon}
\end{equation}

Taking first jets in
\eqref{eq:beta-theta-identity} gives
\[
d\beta_{\theta,\varepsilon}
-
\mathcal A_{\theta,\varepsilon}
\wedge\beta_{\theta,\varepsilon}
=
-2\bigl(1+H_\varepsilon\sin\theta\bigr)
N_{1,\varepsilon}\,da_\varepsilon
+
\mathcal O(\varepsilon^2),
\]
where \(da_\varepsilon=da+\varepsilon\dot{da}\) denotes the
first jet of the area form. By \eqref{alphaJAepsilon}, $H_\varepsilon=H+\varepsilon\dot H=0$.
Thus, \eqref{eq:d-alpha-theta-epsilon} becomes
\begin{equation}
	d\alpha_{\theta,\varepsilon}
	=
	-2R_{\theta,\varepsilon}^{\mathsf T}
	N_{1,\varepsilon}\,da_\varepsilon
	+
	\mathcal O(\varepsilon^2).
	\label{eq:d-alpha-theta-epsilon-final}
\end{equation}

On the other hand, for $X,Y\in T\Sigma$ we have
\begin{eqnarray}
(\alpha_{\theta,\varepsilon}\wedge
\alpha_{\theta,\varepsilon})(X,Y)
&=&
2\alpha_{\theta,\varepsilon}(X)
\times
\alpha_{\theta,\varepsilon}(Y)
\stackrel{\eqref{eq163}}{=}
2 \bigl(
R_{\theta,\varepsilon}^{\mathsf T}
\beta_{\theta,\varepsilon}(X)
\bigr)
\times
\bigl(
R_{\theta,\varepsilon}^{\mathsf T}
\beta_{\theta,\varepsilon}(Y)
\bigr) \nonumber
\\
&\stackrel{(\star)}{=}&
2 R_{\theta,\varepsilon}^{\mathsf T}
\bigl(
\beta_{\theta,\varepsilon}(X)
\times
\beta_{\theta,\varepsilon}(Y)
\bigr)
+
\mathcal O(\varepsilon^2),  \label{eq:alpha-theta-cross-product-first-jet}
\end{eqnarray}
where in $(\star)$ we have used that $R_{\theta,\varepsilon}$ is orientation-preserving and orthogonal up to terms of order $\varepsilon^2$, which implies that it preserves the Euclidean cross product to the same order.

For every $s$, the endomorphism $P_{\theta,s}:=\cos\theta\,I+\sin\theta\,J_s$ is an orientation-preserving isometry of $(T\Sigma,g_s)$. Since $\beta_{\theta,s}=\alpha_s\circ P_{\theta,s}$, it follows that
\[
\beta_{\theta,s}(X)\times\beta_{\theta,s}(Y)
=
N_{1,s}\,da_s(X,Y).
\]
Taking first jets yields
\(
\beta_{\theta,\varepsilon}(X)
\times
\beta_{\theta,\varepsilon}(Y)
=
N_{1,\varepsilon}\,
da_\varepsilon(X,Y)
+
\mathcal O(\varepsilon^2).
\)
Combining this identity with \eqref{eq:alpha-theta-cross-product-first-jet}, we obtain
\begin{equation}
\alpha_{\theta,\varepsilon}
\wedge\alpha_{\theta,\varepsilon}
=
2R_{\theta,\varepsilon}^{\mathsf T}
N_{1,\varepsilon}\,da_\varepsilon
+
\mathcal O(\varepsilon^2).
\label{eq:alpha-theta-wedge}
\end{equation}

Finally, adding
\eqref{eq:d-alpha-theta-epsilon-final} and
\eqref{eq:alpha-theta-wedge} we get $d\alpha_{\theta,\varepsilon}
+
\alpha_{\theta,\varepsilon}
\wedge\alpha_{\theta,\varepsilon}
=
\mathcal O(\varepsilon^2)$, as desired.
Its coefficient of order one is precisely~\eqref{eq:linearized-MC-theta}.

Equation~\eqref{eq:linearized-MC-theta} is the compatibility condition for~\eqref{eq:q-theta-system};
to make the compatibility condition explicit, define $D_{\alpha_\theta}\colon
\Omega^0(\Sigma,\Im(\mathbb H)) \to \Omega^1(\Sigma,\Im(\mathbb H))$ by
\[
D_{\alpha_\theta}q
:=
dq+\alpha_\theta\cdot q-q\cdot\alpha_\theta.
\]
Thus, equation~\eqref{eq:q-theta-system} can be written as
\begin{equation}\label{eq:q-theta-covariant}
D_{\alpha_\theta}q_\theta=\dot\alpha_\theta.
\end{equation}
As in the proof of Theorem~\ref{propos8.9}, we extend
$D_{\alpha_\theta}$ to $\Im(\mathbb H)$-valued one-forms by setting
\[
D_{\alpha_\theta}\eta
:=
d\eta
+\alpha_\theta\wedge\eta
+\eta\wedge\alpha_\theta,
\qquad
\eta\in
\Omega^1(\Sigma,\Im(\mathbb H)).
\]
The curvature $R^{D_{\alpha_{\theta}}}$ of this connection is given by
\[
R^{D_{\alpha_{\theta}}}(X,Y)q
=
\bigl[
(d\alpha_\theta+
\alpha_\theta\wedge\alpha_\theta)(X,Y),
q
\bigr].
\]

Since \(\alpha_\theta\) satisfies the Maurer--Cartan equation~\eqref{eq:MC-alpha-theta}, the connection
$D_{\alpha_\theta}$ is flat. Consequently, $D_{\alpha_\theta}^{\,2}q=0$
for every $q\in C^\infty(\Sigma,\operatorname{Im}\mathbb H)$.
Applying $D_{\alpha_\theta}$ to both sides of
\eqref{eq:q-theta-covariant}, we obtain the necessary compatibility
condition
\[
0
=
D_{\alpha_\theta}^{\,2}q_\theta
=
D_{\alpha_\theta}\dot\alpha_\theta
=
d\dot\alpha_\theta
+\alpha_\theta\wedge\dot\alpha_\theta
+\dot\alpha_\theta\wedge\alpha_\theta,
\]
which is~\eqref{eq:linearized-MC-theta}.

Since $\Sigma$ is simply connected and $D_{\alpha_\theta}$ is
flat, this compatibility condition is also sufficient. Hence,
for every prescribed value $q_\theta(p_0)=q_{\theta,0}\in \Im(\mathbb H)$, the PDE system \eqref{eq:q-theta-system} has a unique global solution $q_\theta$, which lies in $\Im(\mathbb H)$ after taking real parts in~\eqref{eq:q-theta-system}, i.e., 
$q_\theta\colon\Sigma\to \Im(\mathbb H)$.

It remains to show that $u_\theta:=\langle X_\theta,N_\theta\rangle$ is Jacobi for the immersion $f_{\theta}$. First realize the vector field $X_\theta=f_\theta\cdot q_\theta$ by the variation $f_{\theta,\varepsilon}:=f_\theta\cdot\exp(\varepsilon q_\theta)$,
where $\exp $ denotes the usual exponential map in $\mathbb{S}^3$
(i.e., define as in~\eqref{fvarepsilon*} with $f^*,f_{\varepsilon}^*,q$ replaced by $f_\theta,f_{\theta,\varepsilon},q_\theta$ respectively).
If
\(
\vartheta_{\theta,\varepsilon}
:=
f_{\theta,\varepsilon}^{-1}\cdot
df_{\theta,\varepsilon},
\)
then the same expansion as in~\eqref{thetavarepsilon} gives
\[
\vartheta_{\theta,\varepsilon}
=
\alpha_\theta
+\varepsilon\bigl(
dq_\theta+\alpha_\theta\cdot q_\theta
-q_\theta\cdot\alpha_\theta
\bigr)
+\mathcal O(\varepsilon^2)
=
\alpha_\theta+\varepsilon\dot\alpha_\theta
+\mathcal O(\varepsilon^2)
=
\alpha_{\theta,\varepsilon}
+\mathcal O(\varepsilon^2).
\]
Consequently, the metric and the left-trivialized unit normal of
$f_{\theta,\varepsilon}$ agree, up to
$\mathcal O(\varepsilon^2)$, with the formal objects obtained from
$g_\varepsilon$, $R_{\theta,\varepsilon}$, and
$N_{1,\varepsilon}$, exactly as in the proof of
Theorem~\ref{propos8.9}.
The comparison of the first-order Weingarten equations carried out in the proof of Theorem~\ref{propos8.9} applies with $J_\varepsilon$ replaced by $P_{\theta,\varepsilon}
:=
\cos\theta\,I+\sin\theta\,J_\varepsilon$, and it gives (compare with~\eqref{eq:shape-comparison-star}) 
\begin{equation}\label{eq:shape-first-jet-theta}
A^{(\theta)}_\varepsilon
=
P_{\theta,\varepsilon}^{-1}A_\varepsilon
+\mathcal O(\varepsilon^2)
=
\bigl(\cos\theta\,I-\sin\theta\,J_\varepsilon\bigr)A_\varepsilon
+\mathcal O(\varepsilon^2),
\end{equation}
where $A^{(\theta)}_\varepsilon$ is the actual shape operator of $f_{\theta,\varepsilon}$, whereas $A_\varepsilon=A+\varepsilon\dot A$ is the first jet associated with the original variation $f_s$. For every $s$, $J_s$ is skew-adjoint and $A_s$ is self-adjoint with respect to $g_s$; hence $\mbox{trace}(J_sA_s)=0$ for any $s$. Taking first jets in this identity, or equivalently using~\eqref{eq118}, we obtain
$\mbox{trace}(J_\varepsilon A_\varepsilon)
=
\mathcal O(\varepsilon^2)$.
Moreover, $u$ is Jacobi, and therefore $\mbox{trace}(A_\varepsilon)
=2H_\varepsilon =\mathcal O(\varepsilon^2)$.
Taking traces in~\eqref{eq:shape-first-jet-theta}, we conclude that
\[
2H^{(\theta)}_\varepsilon
=
\cos\theta\,\operatorname{trace}(A_\varepsilon)
-\sin\theta\,\operatorname{trace}(J_\varepsilon A_\varepsilon)
+\mathcal O(\varepsilon^2)
=
\mathcal O(\varepsilon^2).
\]
Thus, the first variation of the mean curvature of $f_{\theta,\varepsilon}$ vanishes. Decomposing $X_\theta$ into its tangential and normal components and using the normal variation formula, as at the end of the proof of Theorem~\ref{propos8.9}, gives
$\mathcal L_{f_\theta}u_\theta=0$.

Finally, the proof of Lemma~\ref{lem:initial-data-gauge} depends only on the flat equations for $R$ and for the Maurer--Cartan form of the target immersion. Applied to $R_\theta$ and $\alpha_\theta$, it shows that changing the initial conditions adds the restriction of an ambient Killing field along $f_\theta$, whose normal component is a Jacobi--Killing function. This proves the theorem.
\end{proof}

\begin{remark}[Relation with the Sym--Bobenko formula]\label{rem:Sym-Bobenko}
{\rm 
Theorem~\ref{thm:Frobenius-associated-family} is a geometric Frobenius formulation of the same associated-family phenomenon that appears in the integrable-systems description of CMC surfaces. In the latter framework, the Gauss--Codazzi equations are encoded by a loop of flat $\mathfrak{su}(2)$-connections depending on a spectral parameter $\lambda\in\mathbb S^1$. After integrating this loop to an extended frame $F_\lambda$, an immersion into $\mathbb S^3\simeq SU(2)$ is recovered from the values of $F_\lambda$ at two Sym points by the Sym--Bobenko formula. Simultaneously rotating the two Sym points while keeping their angular separation fixed produces the associated family; in the minimal case the separation is the one for which the resulting mean curvature vanishes. See, for instance, Pinkall and Sterling~\cite{ps1}, Bobenko~\cite{bob}, and Dorfmeister, Pedit, and Wu~\cite{dpw}.

In our formulation the associated-family parameter is fixed geometrically by
\[
P_\theta=\cos\theta\,I+\sin\theta\,J,
\qquad A_\theta=P_\theta^{-1}A.
\]
Identifying $\theta$ with the phase of a particular loop parameter requires specifying the frame, gauge, and spectral-parameter conventions; no such identification is needed here. The immersion is reconstructed by the two successive Frobenius systems~\eqref{eq:R-theta-system} and~\eqref{eq:f-theta-Frobenius}, without loop-group factorization. Their direct linearization gives Theorem~\ref{thm:Jacobi-associated-family} for arbitrary Jacobi functions.
}
\end{remark}

\begin{remark}\label{rem:boundary-associated-family}
{\rm 
The construction of $u_\theta$ is valid for every $\theta$. The geometric interchange underlying the pure Dirichlet--Neumann rule occurs at the conjugate angles $\theta=\pi/2$ modulo $\pi$, where $A_\theta=\pm JA$. At these angles asymptotic and principal directions are interchanged. The case $\theta=\pi/2$ is proved in Section~\ref{sec:integrable-gauge}; the opposite conjugate angle differs by an ambient orientation-reversing congruence, with the corresponding change of unit normal, and gives the same homogeneous boundary conditions. In contrast, at $\theta=0$ modulo $\pi$ the two types of directions are preserved. Away from these special angles an asymptotic direction at a nonumbilic point is in general neither asymptotic nor principal for $f_\theta$. Thus the argument above provides no pure Dirichlet--Neumann exchange for a general associated member, and we make no claim here about a general mixed boundary formula.
}
\end{remark}

\section{Jacobi conjugation for CMC $1$ surfaces in $\R^3$}
\label{sec:CMC-Jacobi-conjugation}

We now return to case~(d) of the introduction. Let $f,f^*\colon(\Sigma,g)\to\R^3$ be a conjugate pair of oriented CMC~$1$ immersions as in Section~\ref{sec8.1}, with $\Sigma$ simply connected. We use the convention $df\circ A=-dN$, and write
\begin{equation}\label{eq170}
R\,df^*=df\circ J,\qquad dR=\mathcal C R,\qquad
\mathcal C=-\phi\bigl(df\circ(I+J)\bigr), \qquad N^*=R^{\top}\, N,
\end{equation}
where $\phi(v)w=v\times w$. The corresponding normals and shape operators are related by~\eqref{eq146} and~\eqref{eq:CMC-shape-conjugate}. In particular, $|A^*|^2=|A|^2$, and the two Jacobi operators coincide:
$\mathcal L_{f}=\Delta+|A|^2=\Delta+|A^*|^2=\mathcal L_{f^*}$.

\begin{definition}
{\rm 
Let $f\colon\Sigma\to\mathbb R^3$ be an oriented CMC immersion with unit normal $N$. A function $v\in C^\infty(\Sigma)$ is called a \emph{Euclidean Jacobi--Killing function} if there exists an ambient Euclidean Killing field of the form $K(x)=b-c\times x$, $x\in\mathbb R^3$, with $b,c\in\mathbb R^3$, such that $v=\langle K\circ f,N\rangle$.
Since the flow of $K$ consists of Euclidean isometries, every Euclidean Jacobi--Killing function belongs to the kernel of $\mathcal{L}_f$.
}
\end{definition}

We next state the parallel result to Theorem~\ref{propos8.9} and Lemma~\ref{lem:initial-data-gauge} in the context of conjugate CMC $H=1$ immersions into $\mathbb{R}^3$.
\begin{theorem}\label{thm:CMC-Jacobi-conjugation}
Let $f,f^*\colon (\Sigma,g)\to \mathbb R^3$ be a conjugate pair of oriented CMC $1$ immersions, with $\Sigma$ simply connected, and let $R\colon \Sigma \to SO(3)$ be the corresponding frame map. Fix a base point $p_0\in\Sigma$. If $u\in C^\infty(\Sigma)$ satisfies $\mathcal L_f u=0$, then the following hold.
\begin{enumerate}
\item The $\mathbb R^3$-valued one-form on $\Sigma$ defined by
\begin{equation}\label{eq:CMC-omega-form}
\zeta_u(Y)	:=	u\,df^*((I-J)AY)-du(Y+JY)N^*
\end{equation}
is closed. Consequently, for every $\omega_0\in\mathbb R^3$, the initial value problem
\begin{equation}\label{eq:CMC-omega-PDE}
d\omega=\zeta_u \quad \mbox{on $\Sigma$,} \qquad \omega(p_0)=\omega_0,
\end{equation}
has a unique global solution $\omega\colon\Sigma\to\mathbb R^3$.
	
\item Once $\omega$ has been determined, define the $\mathbb R^3$-valued one-form on $\Sigma$
\begin{equation}\label{eq:CMC-X-star-form}
\begin{split}
\xi_{u,\omega}(Y)
			:={}&
			-\omega\times df^*(Y)
			-u\,df^*(AY)
			+du(JY)N^*.
\end{split}
\end{equation}
Then $\xi_{u,\omega}$ is closed. Therefore, for every $X_0^*\in\mathbb R^3$, the
initial value problem 
\begin{equation}\label{eq:CMC-X-star-PDE}
dX^*=\xi_{u,\omega}\quad \mbox{on $\Sigma$,} \qquad	X^*(p_0)=X_0^*
\end{equation}
has a unique global solution $X^*\colon\Sigma\to \mathbb R^3$.
Moreover, $u^*:=\langle X^*,N^*\rangle$ is a Jacobi function for $f^*$, i.e.
$\mathcal L_{f^*}u^*=0$.
	
\item (Dependence on the initial conditions).
If $(\widehat\omega,\widehat X^*)$ is obtained from another choice
of initial values, then
\begin{equation}\label{eq:CMC-gauge-ambiguity}
\widehat\omega-\omega=c,
\qquad
\widehat X^*-X^*=-c\times f^*+b
\end{equation}
for some constant vectors $b,c\in\mathbb R^3$. Thus $u^*$ is well defined modulo Euclidean Jacobi--Killing functions. No integrability of $u$ through a family of CMC immersions is required.
\end{enumerate}
\end{theorem}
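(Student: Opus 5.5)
The plan is to mirror the proof of Theorem~\ref{propos8.9}, replacing the Maurer--Cartan machinery of $\mathbb S^3$ by the flat Euclidean structure. I will first see where $\zeta_u$ and $\xi_{u,\omega}$ come from. Take an arbitrary normal variation $f_s$ of $f$ with variational field $uN$; it need not have constant mean curvature for $s\ne0$. Form the first jets $g_\varepsilon,J_\varepsilon,A_\varepsilon,N_\varepsilon$ and $\mathcal C_\varepsilon=-\phi(df_\varepsilon\circ(I+J_\varepsilon))$. Using $\dot J=-2uJ(A-I)$ (the Euclidean analogue of Lemma~\ref{lemma8.7}, with traceless part $A-HI=A-I$) and $\dot N=-df(\nabla u)$, the linearization of $dR=\mathcal CR$ forces $\dot R=R\,\Omega$. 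Here $d\Omega=R^{\top}\dot{\mathcal C}R$, and writing $\Omega=\phi(\omega')$ one has $R^{\top}\dot{\mathcal C}R=\phi(\zeta)$. The form $\zeta$ should reduce, after substituting $R^{\top}df\circ J=df^*$ and $R^{\top}N=N^*$, to $\zeta_u$ in \eqref{eq:CMC-omega-form}. I will take $\omega=-\omega'$ or $\omega'$, whichever matches the sign in \eqref{eq:CMC-X-star-form}. Likewise, linearizing $df^*=R^{\top}df\circ J$ gives
\[
d\dot f^*=-\Omega\,df^*+R^{\top}\bigl(d(uN)\circ J+df\circ\dot J\bigr),
\]
which should be exactly $\xi_{u,\omega}$. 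I will check these two identifications algebraically in a principal frame $\{e_1,e_2=Je_1\}$ of $f$, using $df(Y)\times N=-df(JY)$ and \eqref{eq:CMC-shape-conjugate}.

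For item 1, I will compute $d\zeta_u(E_1,E_2)$ in a frame with $\nabla E_i(p)=0$. The term $d\bigl(du\circ(I+J)\bigr)$ equals $d(du\circ J)=-\Delta u\,da$. The term $du\wedge dN^*$ is tangential. For $d\bigl(u\,df^*\circ(I-J)A\bigr)$ I will use the Codazzi equation for $A$ (so that $d(df^*\circ JA)$ and $d(df^*\circ A)$ reduce to normal terms involving $\operatorname{trace}(JA)=0$ and $\operatorname{trace}(A^*JA)$-type quantities) together with $A^*=I-J(A-I)$. I expect the tangential parts to cancel between $du\wedge df^*\circ(I-J)A$ and $-du\wedge dN^*$. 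The normal part should then be a multiple of $(\Delta u+|A|^2u)N^*\,da=(\mathcal L_fu)N^*\,da$, which vanishes; the constant $2$ coming from $H=1$ must combine with $|A-I|^2$ to produce exactly $|A|^2$. This is the Euclidean counterpart of the first step of Theorem~\ref{propos8.9}. Alternatively, I will obtain it abstractly as the linearization of $d\mathcal C-\mathcal C\wedge\mathcal C=2(1-H)\phi(N)\,da$ from Lemma~\ref{lem8.2}, whose derivative is $-2\dot H\phi(N)\,da=-(\mathcal L_fu)\phi(N)\,da$. I will use the direct computation as a cross-check. Simple connectivity then gives $\omega$.

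For item 2, closedness of $\xi_{u,\omega}$ follows from the linearized identity $d(R_\varepsilon^{\top}df_\varepsilon\circ J_\varepsilon)=\mathcal O(\varepsilon^2)$. This holds exactly as in Lemma~\ref{lem8.1}: $d\theta_1$ is expressed through \eqref{eq137}, whose first jet holds because $H_\varepsilon=1+\mathcal O(\varepsilon^2)$ and $dR_\varepsilon=\mathcal C_\varepsilon R_\varepsilon+\mathcal O(\varepsilon^2)$. As a direct check, $d\xi=-\zeta_u\wedge df^*-d(u\,df^*\circ A)+d\bigl(du\circ J\,N^*\bigr)$ (since $d(df^*)=0$), which cancels term by term. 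For the Jacobi property, set $f^*_\varepsilon=f^*+\varepsilon X^*$. Its differential is $R_\varepsilon^{\top}df_\varepsilon\circ J_\varepsilon+\mathcal O(\varepsilon^2)$, so its metric is $g_\varepsilon$ to first order and its normal is $R_\varepsilon^{\top}N_\varepsilon$ to first order, as in Claim~3. Comparing Weingarten equations as in Claim~4 gives $A^*_\varepsilon=I-J_\varepsilon(A_\varepsilon-I)+\mathcal O(\varepsilon^2)$. Hence $2H^*_\varepsilon=2-\operatorname{trace}(J_\varepsilon A_\varepsilon)+\mathcal O(\varepsilon^2)=2+\mathcal O(\varepsilon^2)$ by \eqref{eq118}, so $\mathcal L_{f^*}u^*=0$, because the tangential part of $X^*$ contributes nothing when $H^*$ is constant.

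For item 3, subtract the two systems. First, $d(\widehat\omega-\omega)=0$, so $\widehat\omega-\omega=c$ is constant. Then $d(\widehat X^*-X^*)=-c\times df^*=d(-c\times f^*)$, so $\widehat X^*-X^*=-c\times f^*+b$, and its normal component is a Euclidean Jacobi--Killing function. I expect the main obstacle to be the sign and normalization bookkeeping in identifying the linearized forms with $\zeta_u$ and $\xi_{u,\omega}$, in particular tracking $\dot N$, $\dot J$ and the $\pm$ relation between $\Omega$ and $\phi(\omega)$. If the identification resists, I will fall back on the direct frame computation of $d\zeta_u$ and $d\xi_{u,\omega}$. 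The Weingarten comparison can then be carried out using $dX^*=\xi_{u,\omega}$ directly.
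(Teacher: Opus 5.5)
Your proposal is correct and follows the paper's architecture. It uses the same first jets of an arbitrary normal variation, the identification $R^{\top}\dot{\mathcal C}R=\phi\circ\zeta_u$ with $\Omega=\phi(\omega)$ (so the sign you hedge on is $+$), closedness of $R^{\top}\dot{\mathcal C}R$ by linearizing the flatness defect of $\mathcal C$, the variation $f^*_\varepsilon=f^*+\varepsilon X^*$, and the identical argument for item~3. One slip: the defect is $d\mathcal C-\mathcal C\wedge\mathcal C=2(H-1)\phi(N)\,da$, not $2(1-H)\phi(N)\,da$. This is harmless, since its first variation vanishes either way when $\mathcal L_fu=0$.

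The real difference is in item~2.
\begin{itemize}
\item Your closedness of $\xi_{u,\omega}$ via first jets of Lemma~\ref{lem8.1} and \eqref{eq137} is just a repackaging of the paper's linearization of $d(dh\circ J_h)-\mathcal C_h\wedge(dh\circ J_h)=2(1-H_h)N_h\,da_h$.
\item For the Jacobi property the routes genuinely differ. You redo the Weingarten comparison of Theorem~\ref{propos8.9} to get $A^*_\varepsilon=I-J_\varepsilon(A_\varepsilon-I)+\mathcal O(\varepsilon^2)$ and then take traces. Note the trace also contains $\operatorname{trace}J_\varepsilon$, which is zero because every $J_s$ is skew-adjoint.
\item The paper never computes the shape operator. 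It writes $2\widehat H_\varepsilon\widehat N_\varepsilon\,d\widehat a_\varepsilon=-d(df^*_\varepsilon\circ\widehat J_\varepsilon)=-R_\varepsilon^{\top}(\mathcal C_\varepsilon\wedge df_\varepsilon)+\mathcal O(\varepsilon^2)$ and uses the algebraic identity $\mathcal C_h\wedge dh=-2N_h\,da_h$.
\end{itemize}
The paper's route is shorter and skips both the adjointness/trace argument and the first-order injectivity step. Yours costs a little more but yields the full first-order shape operator of $f^*_\varepsilon$. That is the information one needs for boundary questions, such as reading off $\sigma^*_\varepsilon(T,T)$ and $\sigma^*_\varepsilon(T,\eta)$ along an arc.

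Your direct frame cross-checks also go through. The tangential terms of $d\zeta_u$ cancel using only $\operatorname{trace}A=2$ and $A^*=I-J(A-I)$. The normal coefficient is $\operatorname{trace}\bigl(JA^*(I-J)A\bigr)=|A|^2$, so $d\zeta_u=(\mathcal L_fu)N^*\,da$. Similarly $d\xi_{u,\omega}=-(\mathcal L_fu)N^*\,da$.
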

\begin{proof}
We first compute identities valid for an arbitrary oriented immersion into $\mathbb R^3$.
\par\vspace{.2cm}\noindent
{\bf Claim 1.} {\em 
Let $h\colon\Sigma\to \mathbb R^3$ be an oriented immersion, not necessarily of constant mean curvature. Denote by $J_h,N_h,H_h$, and $da_h$ its complex structure, unit normal, mean curvature, and area form, respectively. Set $\mathcal C_h:=-\phi\circ dh\circ(I+J_h)$.
Then:
\begin{equation}\label{eq:CMC-Frobenius-defects}
\begin{aligned}
d\mathcal C_h-\mathcal C_h\wedge\mathcal C_h
&=
2(H_h-1)\phi(N_h)\,da_h,
\\
d(dh\circ J_h)-\mathcal C_h\wedge(dh\circ J_h)
&=
2(1-H_h)N_h\,da_h.
\end{aligned}
\end{equation}
In addition,
\begin{equation}\label{eq:C-wedge-dh-general}
\mathcal C_h\wedge dh=-2N_h\,da_h.
\end{equation}
}
\par\vspace{.2cm}\noindent
{\em Proof of Claim 1.}
Fix a point $p\in\Sigma$, and choose a positively oriented $g_h$-orthonormal frame
$E_1,E_2=J_hE_1$ which is geodesic at $p$, i.e. $\nabla _{(E_i)_p}E_j=0$. 
At $p$, the Gauss formula gives $d(dh\circ J_h)(E_1,E_2)	=
-\bigl(\sigma_h(E_1,E_1)+\sigma_h(E_2,E_2)\bigr)N_h
=-2H_hN_h$, hence
\begin{equation}\label{eq177a}
d(dh\circ J_h)=-2H_hN_h\, da_h.
\end{equation}
Since $d(dh)=0$, it follows that 
\begin{equation}\label{eq177}
d\mathcal C_h	=
-\phi\bigl(d(dh\circ J_h)\bigr)	=2H_h\phi(N_h)\,da_h.
\end{equation}
Moreover,
\[
\mathcal C_h(E_1)=-\phi\bigl(dh(E_1)+dh(E_2)\bigr),
\qquad
\mathcal C_h(E_2)=\phi\bigl(dh(E_1)-dh(E_2)\bigr).
\]
Using $[\phi(v),\phi(w)]=\phi(v\times w)$, we obtain
\begin{equation}\label{eq178}	
(\mathcal C_h\wedge\mathcal C_h)(E_1,E_2)	=	2\phi(N_h).
\end{equation}
Now the first identity in~\eqref{eq:CMC-Frobenius-defects} follows directly from~\eqref{eq177}  and~\eqref{eq178}.

On the other hand,
\[
\begin{aligned}
	\bigl(\mathcal C_h\wedge(dh\circ J_h)\bigr)(E_1,E_2)
	&=
	\mathcal C_h(E_1)(-dh(E_1))
	-\mathcal C_h(E_2)dh(E_2)
	\\
	&=
	(dh(E_1)+dh(E_2))\times dh(E_1)
	-(dh(E_1)-dh(E_2))\times dh(E_2)
	\\
	&=-2N_h,
\end{aligned}
\]
together with~\eqref{eq177a}, proves the second identity in
\eqref{eq:CMC-Frobenius-defects}.
		
		Finally,
		\[
		\begin{aligned}
			(\mathcal C_h\wedge dh)(E_1,E_2)
			&=
			\mathcal C_h(E_1)dh(E_2)-\mathcal C_h(E_2)dh(E_1)
			\\
			&=
			-(dh(E_1)+dh(E_2))\times dh(E_2)
			-(dh(E_1)-dh(E_2))\times dh(E_1)
			\\
			&=-2N_h,
		\end{aligned}
		\]
which demonstrates \eqref{eq:C-wedge-dh-general}. This completes the proof of Claim 1.
\par\vspace{.2cm}

We continue with the proof of Theorem~\ref{thm:CMC-Jacobi-conjugation}. Choose an actual normal variation $f_s\colon\Sigma\to\mathbb R^3$ of $f_0=f$, so that
$\left.\frac{\partial f_s}{\partial s}\right|_{s=0}=uN$. The immersions $f_s$ are not assumed to have constant mean curvature for $s\neq0$. Let $g_s$, $J_s$, $N_s$, $H_s$, $da_s$ denote the corresponding geometric objects, and define
\begin{equation}\label{Cs}
\mathcal C_s:=-\phi\circ df_s\circ(I+J_s).
\end{equation}
Since $f$ has CMC $H=1$, the traceless shape operator of $f$ is $B=A-I$.
Although equation~\eqref{dotJ} was stated for immersions
into $\mathbb{S}^3$, its proof does not involve the curvature of the
ambient space and therefore applies without change to immersions into
$\mathbb{R}^3$. Hence the first variation of the complex structure is given by 
\begin{equation}\label{eq:Jdot-CMC-one}
	\dot J=-2uJ(A-I).
\end{equation}
\par\vspace{.2cm}\noindent
{\bf Claim 2} (Linearized equation for $\mathcal C$).
{\em 
For the above normal variation of the CMC\/ $1$ immersion $f$, the first variation of $\mathcal C$ is the following smooth one-form on $\Sigma$ taking values in the space of skew-symmetric matrices of order three:
	\begin{equation}\label{eq:CMC-C-variation}
	\dot{\mathcal C}
	=
	\phi\!\left(
	u\,df\circ(I+J)A
	-
	\bigl(du\circ(I+J)\bigr)N
	\right).
\end{equation}
Here the product of a scalar-valued one-form by $N$ is understood
pointwise; explicitly,
\[
\Bigl(\bigl(du\circ(I+J)\bigr)N\Bigr)(Y)
=
\bigl(du(Y)+du(JY)\bigr)N.
\]
}
\par\vspace{.2cm}\noindent
{\em Proof of Claim 2.} Differentiating~\eqref{Cs} at $s=0$ gives, for every $Y\in T\Sigma$,
\begin{eqnarray*}
\dot{\mathcal C}(Y)
&=&
-\phi\left(
\left.
\frac{\partial}{\partial s}
\right|_{s=0}
\bigl[df_s\bigl((I+J_s)Y\bigr)\bigr]
\right)
\\
&=&
-\phi\left(
d(uN)\bigl((I+J)Y\bigr)
+
df(\dot JY)
\right)
\\
&\stackrel{\eqref{eq:Jdot-CMC-one}}{=}&
-\phi\left(
\bigl(du(Y)+du(JY)\bigr)N
+
u\,dN\bigl((I+J)Y\bigr)
-2u\, df(J(A-I)Y)
\right)
\\
&=&
-\phi\Bigl(
\bigl(du(Y)+du(JY)\bigr)N
-u\,df\bigl(A(I+J)Y\bigr)
-2u\,df\bigl(J(A-I)Y\bigr)
\Bigr).
\end{eqnarray*}
We now simplify the two terms involving $df$. Since $AJ+JA=\mbox{trace}(A)J=2J$ because $f$ has mean curvature $H=1$, we have $A(I+J)+2J(A-I)=(I+J)A$.
Consequently,
\[
\begin{aligned}
	\dot{\mathcal C}(Y)
	&=
	-\phi\left(
	\bigl(du(Y)+du(JY)\bigr)N
	-u\,df\bigl((I+J)AY\bigr)
	\right)
	\\
	&=
	\phi\left(
	u\,df\bigl((I+J)AY\bigr)
	-\bigl(du(Y)+du(JY)\bigr)N
	\right),
\end{aligned}
\]
which is~\eqref{eq:CMC-C-variation}. This proves Claim~2.
\par\vspace{.2cm}\noindent
{\bf Claim 3}
{\em 
The one-form $R^\top\dot{\mathcal C}R$ is closed. Moreover,
$R^\top\dot{\mathcal C}R=\phi\circ\zeta_u$, and thus, $\zeta_u$ is also a closed one-form on $\Sigma$.
}
\par\vspace{.2cm}\noindent
{\em Proof of Claim 3.} The first identity in~\eqref{eq:CMC-Frobenius-defects} applies to the immersion $f_s$ gives $d\mathcal C_s-\mathcal C_s\wedge\mathcal C_s=2(H_s-1)\phi(N_s)\,da_s$. Differentiating this equation at $s=0$ and using $H=1$ gives
\begin{equation}\label{eq:CMC-linearized-curvature-defect}
d\dot{\mathcal C}
	-\mathcal C\wedge\dot{\mathcal C}
	-\dot{\mathcal C}\wedge\mathcal C
=
2\dot H\,\phi(N)\,da
=
(\mathcal L_fu)\phi(N)\,da
=0.
\end{equation}
Therefore,
\begin{eqnarray}
d\bigl(R^\top\dot{\mathcal C}R\bigr) 
&=&
d\bigl(R^\top\dot{\mathcal C}\bigr) R
-R^{\top}\dot{\mathcal C}\wedge dR \nonumber
\\
&=&
\bigl( d(R^\top)\wedge \dot{\mathcal C}+R^{\top}d\dot{\mathcal C}\bigr) R
-R^{\top}\dot{\mathcal C}\wedge dR \quad \nonumber
\\
&\stackrel{\eqref{eq170}}{=}&
\bigl( -R^{\top}\mathcal C\wedge \dot{\mathcal C}
+R^{\top} d\dot{\mathcal C}\bigr) R
-R^{\top}\dot{\mathcal C}\wedge \mathcal C R \nonumber
\\
&=&
R^\top
\bigl( -\mathcal C\wedge\dot{\mathcal C}
+d\dot{\mathcal C}
-\dot{\mathcal C}\wedge\mathcal C
\bigr)R
\nonumber\\
&\stackrel{\eqref{eq:CMC-linearized-curvature-defect}}{=}&
0.
\label{eq:closed-RCdotR}
\end{eqnarray}
We next show that $R^\top\dot{\mathcal C}R=\phi\circ\zeta_u$, where $\zeta_u$ is the one-form introduced in~\eqref{eq:CMC-omega-form}. Since $R\in SO(3)$, for any $w\in \mathbb{R}^3$ we have $(R^\top\phi(v)R)w=R^\top\bigl( \phi(v)(Rw)\bigr) =
R^\top\bigl( v\times Rw\bigr) =(R^\top v)\times w=\phi(R^{\top}v)w$.
Thus, $R^\top\phi(v)R=\phi(R^\top v)$. Applying this equality to 
$v=u\, df((I+J)AY)-du(Y+JY)N$ where $Y\in T\Sigma$, we get
\begin{eqnarray*}
R^\top\dot{\mathcal C}(Y)R
&=&
\phi\left(
u\,R^\top df\bigl((I+J)AY\bigr)
-du(Y+JY)R^\top N\right) 
\\
&=&
\phi\left(
u\,R^\top df\bigl(J(I-J)AY\bigr)
-du(Y+JY)R^\top N
\right)
\\
&\stackrel{\eqref{eq170}}{=}&
\phi\left(
u\,df^*\bigl((I-J)AY\bigr)
-du(Y+JY)N^*
\right)
\\
&\stackrel{\eqref{eq:CMC-omega-form}}{=}&
\phi\bigl(\zeta_u(Y)\bigr).
\end{eqnarray*}
Since $\phi\circ\zeta_u=R^\top\dot{\mathcal C}R$ is closed and 
$\phi\colon\mathbb R^3\to\mathfrak{so}(3)$ is a linear
isomorphism, then $\zeta_u$ is also closed, and Claim~3 is proved.
\par\vspace{.2cm}
As $\Sigma$ is simply connected, for every prescribed
$\omega(p_0)=\omega_0\in \mathbb{R}^3$ there exists a unique global solution $\omega\colon\Sigma\to\mathbb R^3$ of~\eqref{eq:CMC-omega-PDE}. This proves item~1 of the theorem.

Regarding item 2, we shall first need the first variation of $s\mapsto \beta_s:=df_s\circ J_s\in \Omega^1(\Sigma,\mathbb{R}^3)$.
Given $Y\in T\Sigma$, 
\begin{eqnarray*}
\dot{\beta}(Y)
&:=&
\left. \frac{d}{ds}\right|_{s=0}\beta_s(Y)=
\left. \frac{d}{ds}\right|_{s=0}(df_s\circ J_s)(Y)=d(uN)(JY)+df(\dot{J}Y)
\\
&\stackrel{\eqref{eq:Jdot-CMC-one}}{=}&
du(JY)N+u\, dN(JY)-2u\, df(J(A-I)Y).
\end{eqnarray*}
Using the Weingarten equation for $f$ and that $AJ+JA=2J$, we can write the last expression as $du(JY)N-u\, df(JAY)$, and thus,
\begin{equation}\label{eq:CMC-beta-variation}
\dot{\beta}
=
(du\circ J)N-u\,df\circ JA\in \Omega^1(\Sigma,\mathbb{R}^3).
\end{equation}
\par\vspace{.2cm}\noindent
{\bf Claim 4}
{\em 
Set $\Omega:=\phi(\omega)$, where $\omega\colon \Sigma\to \mathbb{R}^3$ is the solution to~\eqref{eq:CMC-omega-PDE}. Then:
\begin{enumerate}
\item $R^{\top}\dot{\mathcal{C}}R=d\Omega$ in $\Sigma$.
\item $d\Omega\wedge df^*=R^\top(\dot{\mathcal C}\wedge\beta)$ in $\Sigma$.
\item Define (compare with~\eqref{dotalpha*})
\begin{equation}\label{eq187}
	\dot{\alpha}^*:=-\Omega \, df^*+R^{\top}\dot{\beta}\in \Omega^1(\Sigma,\mathbb{R}^3).
\end{equation}
Then, $\dot{\alpha}^*$ is closed. Moreover, $\dot{\alpha}^*=\xi_{u,\omega}$.
\end{enumerate}
}
\par\vspace{.2cm}\noindent
{\em Proof of Claim 4.} By Claim~3,
\begin{equation}
R^{\top}\dot{\mathcal{C}}R=\phi(\zeta_u)\stackrel{\eqref{eq:CMC-omega-PDE}}{=}
\phi(d\omega)=d\phi(\omega)=d\Omega,
\end{equation}
which is item~1 of the claim. 

Given $X,Y\in T\Sigma$, using~\eqref{eq170} and item 1 we get
\[
\begin{aligned}
(d\Omega\wedge df^*)(X,Y)
&=
d\Omega(X)df^*(Y)-d\Omega(Y)df^*(X) 
\\
&=
R^\top\dot{\mathcal C}(X)R
R^\top\beta(Y)
-
R^\top\dot{\mathcal C}(Y)R
R^\top\beta(X)
	\\
	&=
	R^\top\left(
	\dot{\mathcal C}(X)\beta(Y)
	-
	\dot{\mathcal C}(Y)\beta(X)
	\right)
	\\
	&=
	R^\top(\dot{\mathcal C}\wedge\beta)(X,Y),
\end{aligned}
\]
and item~2 follows. 

As for item~3, the second identity in~\eqref{eq:CMC-Frobenius-defects} applied to $f_s$ gives $d\beta_s-\mathcal C_s\wedge \beta_s=2(1-H_s)N_s\,da_s$. Differentiating this equation at $s=0$ and using $H=1$ we obtain
\begin{eqnarray}
0&=&-(\mathcal L_fu)N\,da=-2\dot H\,N\,da=\left. \frac{d}{ds}\right|_{s=0}
2(1-H_s)N_s\,da_s \nonumber
\\
&=&
\left. \frac{d}{ds}\right|_{s=0}(d\beta_s-\mathcal C_s\wedge \beta_s)
=
d\dot{\beta}-\dot{\mathcal{C}}\wedge \beta-\mathcal{C}\wedge \dot\beta.
\label{eq:CMC-linearized-immersion-defect}
\end{eqnarray}

Taking exterior differentials on~\eqref{eq187} and using item 2 and~\eqref{eq170},
\begin{eqnarray*}
d\dot {\alpha}^*
&=&
-d\Omega\wedge df^*
+dR^\top\wedge\dot{\beta}
+R^\top d\dot{\beta}
\\
&=&
-R^\top(\dot{\mathcal C}\wedge\beta)
-R^\top(\mathcal C\wedge\dot{\beta})
+R^\top d\dot{\beta}
\\
&=&
R^\top
\bigl(
d\dot\beta
-\mathcal C\wedge\dot\beta
-\dot{\mathcal C}\wedge\beta
\bigr)
\stackrel{\eqref{eq:CMC-linearized-immersion-defect}}{=}0,
\end{eqnarray*}
so $\dot {\alpha}^*$ is closed. 

Finally, given $Y\in T\Sigma$,
\begin{eqnarray*}
\dot{\alpha}^*(Y)
&\stackrel{\eqref{eq187}}{=}&
-\Omega \, df^*(Y)+R^{\top}\dot{\beta}(Y)
\\
&\stackrel{\eqref{eq:CMC-beta-variation}}{=}&
-\Omega\,df^*(Y)
+du(JY)R^\top N
-u\,R^\top df(JAY)
\\
&\stackrel{(\star)}{=}&
-\omega\times df^*(Y)
+du(JY)N^*
-u\,df^*(AY)
\\
&\stackrel{\eqref{eq:CMC-X-star-form}}{=}&
\xi_{u,\omega}(Y),
\end{eqnarray*}
where in $(\star)$  we have used that $\Omega=\phi(\omega)$,
$\phi(\omega)v=\omega\times v$, and~\eqref{eq170}. This finishes the proof of Claim~4.
\par\vspace{.2cm}
We continue with the proof of Theorem~\ref{thm:CMC-Jacobi-conjugation}. As $\xi_{u,\omega}$ is closed and $\Sigma$ is simply connected, for every prescribed value
$X^*(p_0)=X^*_0\in \mathbb{R}^3$ there exists a unique global solution $X^*\colon\Sigma\to\mathbb R^3$ of~\eqref{eq:CMC-X-star-PDE}. 

To finish the proof of item~2 of the theorem, it remains to show that $\mathcal L_{f^*}u^*=0$ for $u^*:=\langle X^*,N^*\rangle$. We follow the first-jet argument used in the proof of Theorem~\ref{propos8.9}.

Define
\begin{equation}\label{eq:CMC-Rs-formal}
R_\varepsilon :=R\exp(\varepsilon \Omega),
\end{equation}
where $\exp\colon \mathfrak{so}(3)\to SO(3)$ is the usual exponential of matrices.
$\varepsilon \mapsto R_\varepsilon $ is an $SO(3)$-valued family; we do not claim that, for $\varepsilon \neq0$, $R_\varepsilon $
is the frame map of a conjugate pair. Clearly, 
\[
R_\varepsilon =R+\varepsilon \dot{R}+\mathcal{O}(\varepsilon ^2),\qquad \mbox{where $\dot{R}:=R\Omega$.}
\]

All $\mathcal O(\varepsilon ^2)$ statements below are understood locally in
$C^1$ on compact subsets of $\Sigma$, so that exterior differentiation preserves the
order of the remainder. Expanding~\eqref{eq:CMC-Rs-formal} and using item~1 of Claim 4, we obtain
\begin{eqnarray}
dR_\varepsilon 
&=&
dR
+\varepsilon \,d(R\Omega)
+\mathcal O(\varepsilon ^2)
\stackrel{\eqref{eq170}}{=}
\mathcal CR
+\varepsilon \bigl(\mathcal CR\Omega+R\,d\Omega\bigr)
+\mathcal O(\varepsilon ^2) \nonumber
\\
&=&
\mathcal CR
+\varepsilon \bigl(\mathcal C\dot{R}+\dot{\mathcal C}R\bigr)
+\mathcal O(\varepsilon ^2)
=\mathcal C_\varepsilon R_\varepsilon +\mathcal O(\varepsilon ^2). \label{eq:CMC-dRs}
\end{eqnarray}

Now define an actual variation of the conjugate CMC $H=1$ immersion $f^*$ by
(compare to~\eqref{fvarepsilon*})
\begin{equation}\label{eq:CMC-target-actual-variation}
	f_\varepsilon ^*:=f^*+\varepsilon X^*.
\end{equation}
Around every point of $\Sigma$, $f_\varepsilon ^*$ is an immersion for all
sufficiently small $\varepsilon $. Since the Jacobi equation is local, this is
enough for the argument. Now,
\begin{eqnarray}
R_{\varepsilon}^{\top}(df_{\varepsilon}\circ J_{\varepsilon})
&=&
R_{\varepsilon}^{\top}\beta_{\varepsilon}=[R^{\top}-\varepsilon \, \Omega R^{\top}+\mathcal{O}(\varepsilon^2)][df\circ J+\varepsilon\dot{\beta}+\mathcal{O}(\varepsilon^2)] \nonumber
\\
&=&
R^{\top}(df\circ J)+\varepsilon \bigl( R^{\top}\dot{\beta}-\Omega R^{\top}(df\circ J)\bigr) +\mathcal{O}(\varepsilon^2)  \nonumber
\\
&\stackrel{\eqref{eq170}}{=}&
df^*+\varepsilon\bigl( R^{\top}\dot{\beta}-\Omega \, df^*\bigr) +\mathcal{O}(\varepsilon^2)  \nonumber
\\
&\stackrel{\eqref{eq187}}{=}&
df^*+\varepsilon\, \dot{\alpha}^* +\mathcal{O}(\varepsilon^2)
\stackrel{(\star)}{=}
df^*+\varepsilon\, \xi_{u,\omega}+\mathcal{O}(\varepsilon^2)  \nonumber
\\
&\stackrel{\eqref{eq:CMC-X-star-PDE}}{=}&
df^*+\varepsilon\, dX^*+\mathcal{O}(\varepsilon^2)  \nonumber
\\
&=&
df_{\varepsilon}^*+\mathcal{O}(\varepsilon^2), \label{eq:CMC-first-jet-conjugation}
\end{eqnarray}
where in $(\star)$ we have used item~3 of Claim~4.

Let $\widehat g_\varepsilon$, $\widehat J_\varepsilon$, $\widehat N_\varepsilon$,
$d\widehat a_\varepsilon$, $\widehat H_\varepsilon$ denote, respectively, the metric, complex structure, chosen unit normal, area form, and mean curvature of the immersion
$f_\varepsilon ^*$, with $\widehat N_0=N^*$. Thus,
\[
\widehat g_0=g,\qquad
\widehat J_0=J,\qquad
\widehat N_0=N^*,\qquad
\widehat H_0=H^*=1.
\]
The hat notation is used to distinguish these actual geometric objects from the quantities formally transported from the source variation $f_\varepsilon$ (where conjugation does not necessarily make sense).

Since $R_\varepsilon \in SO(3)$, equation~\eqref{eq:CMC-first-jet-conjugation} gives
$\widehat g_\varepsilon =g_\varepsilon +\mathcal O(\varepsilon ^2)$. 
Consequently,
\begin{equation}\label{eq:CMC-target-geometry-first-jet}
\widehat J_\varepsilon 
=
J_\varepsilon +\mathcal O(\varepsilon ^2),
\qquad
\widehat N_\varepsilon 
=
R_\varepsilon ^\top N_\varepsilon +\mathcal O(\varepsilon ^2),
\qquad
d\widehat a_\varepsilon 
=
da_\varepsilon +\mathcal O(\varepsilon ^2).
\end{equation}
The sign in the second relation is positive because at $\varepsilon =0$ it
reduces to $N^*=R^\top N$.

We can now compute the mean curvature of $f_\varepsilon ^*$ to first order.
Applying~\eqref{eq177a} to the immersion $f_{\varepsilon}^*$, we deduce
\begin{eqnarray*}
2\widehat H_\varepsilon \widehat N_\varepsilon \,d\widehat a_\varepsilon 
&=&
-d(df_\varepsilon ^*\circ\widehat J_\varepsilon )
\\
&\stackrel{\eqref{eq:CMC-first-jet-conjugation},
\eqref{eq:CMC-target-geometry-first-jet}}{=}&
-d\bigl(
R_\varepsilon ^\top (df_\varepsilon \circ J_\varepsilon^2)
\bigr)
+\mathcal O(\varepsilon ^2)
\\
&=&
d(R_\varepsilon ^\top df_\varepsilon )+\mathcal O(\varepsilon ^2)
\\
&\stackrel{\eqref{eq:CMC-dRs}}{=}&
-R_\varepsilon ^\top
(\mathcal C_\varepsilon \wedge df_\varepsilon )
+\mathcal O(\varepsilon ^2).
\end{eqnarray*}
Applying~\eqref{eq:C-wedge-dh-general} to the immersion $f_\varepsilon $,
we have $\mathcal C_\varepsilon \wedge df_\varepsilon 
=
-2N_\varepsilon \,da_\varepsilon $.
Hence
\[
2\widehat H_\varepsilon \widehat N_\varepsilon \,d\widehat a_\varepsilon 
=
2R_\varepsilon ^\top N_\varepsilon \,da_\varepsilon +\mathcal O(\varepsilon ^2).
\]
Using~\eqref{eq:CMC-target-geometry-first-jet} on the left-hand side of the last expression, we conclude that
\begin{equation}\label{eq:CMC-target-H-first-jet}
	\widehat H_\varepsilon =1+\mathcal O(\varepsilon ^2).
\end{equation}
In particular, the first jet of $\widehat H_\varepsilon$ vanishes, i.e.,
$\left.
\frac{d}{d\varepsilon }
\right|_{\varepsilon =0}
\widehat H_\varepsilon =0$. 

Decompose the variational field of~\eqref{eq:CMC-target-actual-variation} in its tangent and normal parts, $X^*=df^*(Z)+u^*N^*$.
For a general variation of a CMC immersion, the first variation of
the mean curvature is
\[
2\left.
\frac{d}{d\varepsilon }
\right|_{\varepsilon =0}\widehat H_\varepsilon 
=
\mathcal L_{f^*}u^*.
\]
Together with \eqref{eq:CMC-target-H-first-jet}, this gives $\mathcal L_{f^*}u^*=0$, as desired. This completes the proof of item~2 of Theorem~\ref{thm:CMC-Jacobi-conjugation}.

It remains to analyze the dependence on the initial conditions.
Let $(\widehat\omega,\widehat X^*)$ be obtained from another pair of
initial values. Since both $\omega$ and $\widehat\omega$ solve the
same first-order PDE~\eqref{eq:CMC-omega-PDE}, we deduce that $d(\widehat\omega-\omega)=0$
on $\Sigma$. As $\Sigma$ is connected, there exists a constant vector
$c\in\mathbb R^3$ such that $\widehat\omega-\omega=c$.

Subtracting the two equations~\eqref{eq:CMC-X-star-PDE} gives
\[
d(\widehat X^*-X^*)=\xi_{u,\widehat{\omega}}-\xi_{u,\omega}
\stackrel{\eqref{eq:CMC-X-star-form}}{=}
-(\widehat\omega-\omega)\times df^*=-c\times df^*.
\]
Therefore, $d\bigl( \widehat X^*-X^*+c\times f^* \bigr)=0$, hence there exists a constant vector $b\in\mathbb R^3$ such that $\widehat X^*-X^*=-c\times f^*+b$. This proves~\eqref{eq:CMC-gauge-ambiguity}.

The vector field $x\in \mathbb{R}^3\mapsto -c\times x+b$ is a Killing field on $\mathbb{R}^3$. Consequently, the corresponding normal
components $u^*, \widehat{u}^*$ differ by a Euclidean Jacobi--Killing function.
This proves item~3 and completes the proof of the theorem.
\end{proof}

\subsection{Boundary behavior}
The boundary behavior for a pair of conjugate CMC $H=1$ immersions $f,f^*\colon \Sigma \to \mathbb{R}^3$ differs from that in Section~\ref{section5.2}. The geometric distinction already appears in~\eqref{eq:CMC-shape-conjugate}: Let $\gamma\subset \Sigma$ be a unit-speed curve, and set $\{T=\gamma',\eta=JT\}$ the corresponding positively oriented orthonormal frame of $T\Sigma$. Then,
\begin{eqnarray*}
\sigma^*(T,\eta)&=&
g(A^*T,\eta)=g(T-J(A-I)T,\eta )=1-g(AT,T)=1-\sigma(T,T),
\\
\sigma^*(T,T)&=&g(T-J(A-I)T,T)=1+g(AT,\eta )=1+\sigma(T,\eta).
\end{eqnarray*}

As usual, let us denote by $\kappa_n:=\sigma(T,T)$, $\tau_g:=\sigma(T,\eta)$ the normal 
curvature and geodesic torsion of $f(\gamma)$, respectively. Analogously, let
$\kappa^*_n:=\sigma^*(T,T)$, $\tau^*_g:=\sigma^*(T,\eta)$ be the normal 
curvature and geodesic torsion of $f^*(\gamma)$, respectively. Then, the above equations read
\[
\tau_g^*=1-\kappa _n,\qquad \kappa_n^*=1+\tau_g.
\]
Thus, conjugation exchanges curvature lines for $f$ (i.e., those where 
$\tau_g=\sigma(T,\eta)=0$) with curves for $f^*$ whose normal curvature equals the mean curvature $H^*=1$, and vice versa.
Since $f,f^*$ induce the same metric, their geodesic curvatures also agree. Hence geodesics which are also curvature lines for $f$ correspond to geodesics for $f^*$ whose normal curvature is $\kappa_n^*=1$, and conversely. 
Notice that this is a correspondence between surface-adapted families of spatial
curves; unlike the minimal case, it does not in general reduce to
an exchange between two elementary ambient classes such as straight
lines and planar reflection curves.

To identify the corresponding boundary behavior at the linearized level (for Jacobi functions), take a function $u\in C^{\infty}(\Sigma)$ satisfying $\mathcal L_f u=0$, 
initial conditions $(\omega_0,X_0^*)\in\mathbb R^3\times \mathbb R^3$ and 
decompose the smooth $\mathbb{R}^3$-valued vector fields $\omega,X^*\in C^{\infty}(\Sigma,\mathbb R^3)$ given respectively by~\eqref{eq:CMC-omega-PDE}, \eqref{eq:CMC-X-star-PDE} into their tangent and normal components to $f^*$:
\begin{equation}\label{eq196}
X^*=df^*(Z)+u^*N^*,\qquad \omega=df^*(W)+\omega_nN^*,
\end{equation}
where $Z,W$ are tangent vector fields to $\Sigma$. 

Take any $Y\in T\Sigma$. Using~\eqref{eq:CMC-X-star-form} and~\eqref{eq:CMC-X-star-PDE} we have
\[
dX^*(Y)=\xi_{u,\omega}(Y)=-\omega\times df^*(Y) -u\,df^*(AY) +du(JY)N^*.
\]
for the left-hand side, the Gauss formula yields
\[
dX^*(Y)\stackrel{\eqref{eq196}}{=}
	\overline{\nabla}^{\mathbb{R}^3}_Y\bigl(df^*(Z)+u^*N^*\bigr)
=
\overline{\nabla}^{\mathbb{R}^3}_Ydf^*(Z)
	+du^*(Y)N^*
	+u^*dN^*(Y).
\]
Taking normal components to $f^*$ gives
\begin{eqnarray}
\sigma^*(Y,Z)+du^*(Y)
&=&
-\langle \omega \times df^*(Y),N^*\rangle +du(JY)  \nonumber
\\
&\stackrel{\eqref{eq196}}{=}&
-\langle df^*(W)\times df^*(Y),N^*\rangle +du(JY)  \nonumber
\\
&=&
g(W,JY)+du(JY). \label{eq197}
\end{eqnarray}
Taking $Y=T=\gamma'$ and $Y=\eta $ in \eqref{eq197}, we respectively get
\begin{eqnarray*}
\sigma^*(\gamma',Z)+(u^*\circ \gamma)'
&=&
g(W,\eta)+\partial_{\eta}u, 
\\
\sigma^*(\eta,Z)+\partial_{\eta}u^*
&=&-g(W,\gamma')-(u\circ \gamma)'.
\end{eqnarray*}

The following example shows that the additional terms in the last two equations cannot, in general, be eliminated along the arc by changing the initial values (compare to Theorems~\ref{thm:D-to-N} and~\ref{thm:N-to-D}).

\begin{example}
\label{ex:CMC-cylinder-boundary}
{\rm 
Consider the oriented CMC~$1$ cylinder
\[
f(s,t)=\left(\tfrac12\cos(2t),\tfrac12\sin(2t),s\right),\qquad
N(s,t)=(-\cos(2t),-\sin(2t),0)
\]
on a simply connected neighborhood $\Sigma$ of a segment of $\{t=0\}$. Here $g=ds^2+dt^2$, $J\partial_s=\partial_t$, $A=\operatorname{diag}(0,2)$, and
$\mathcal L_f=\partial_s^2+\partial_t^2+4$. 

With the notation of Theorem~\ref{thm8.3}, take 
\[
p_0=(0,0),\qquad 
f^*(p_0)=\left(\frac12,0,0\right),\qquad
R(p_0)=
\begin{pmatrix}
1&0&0\\
0&1/\sqrt2&1/\sqrt2\\
0&-1/\sqrt2&1/\sqrt2
\end{pmatrix}.
\]
Then, equations~\eqref{eq147}, \eqref{eq148} produce the conjugate immersion
\[
f^*(s,t)=f\left(\frac{s-t}{\sqrt2},\frac{s+t}{\sqrt2}\right);
\]
this can be checked by the following three facts: $f,f^*$ induce the same metric, the oriented shape operator of $f^*$ is
\begin{equation}\label{eq198}
A^*=\begin{pmatrix}
1&1\\
1&1
\end{pmatrix}=I-J(A-I)
\end{equation} 
(which matches with~\eqref{eq:CMC-shape-conjugate}), and equation~\eqref{eq147} holds at $p_0$.

The source arc $\gamma(s)=(s,0)$ on $\Sigma$ produces via $f$ a straight generatrix which is also a reflective geodesic. Its conjugate on $f^*$ is a helix with curvature and torsion both equal to $1$; to see this, note that along $f^*(\gamma)$ we have the positive orthonormal frame of $\mathbb{R}^3$ $\{(\gamma^*)',\eta^*,N^*\}:=\{df^*(\partial_s),df^*(\partial_t),N^*\}$.
Since $(s,t)$ are orthonormal coordinates for the flat metric $g=ds^2+dt^2$,
we have $\nabla_{\partial_s}\partial_s=0$, $\nabla_{\partial_s}\partial_t=0$ for the Levi-Civita connection associated to $g$. Moreover, the matrix of $A^*$ with respect to the $g$-orthonormal basis $(\partial_s,\partial_t)$ is given by~\eqref{eq198}. Hence, 
\[
\sigma^*(\partial_s,\partial_s)
=
g(A^*\partial_s,\partial_s)=1,
\qquad
\sigma^*(\partial_s,\partial_t)
=
g(A^*\partial_s,\partial_t)
=1.
\]
Using the Gauss and Weingarten equations for $f^*$, we obtain
\begin{equation}\label{eq199}
\begin{aligned}
(\gamma^*)''
&=
\nabla^{\mathbb{R}^3}
_{df^*(\partial_s)}df^*(\partial_s)
=
df^*(\nabla_{\partial_s}\partial_s)
+
\sigma^*(\partial_s,\partial_s)N^*
=
N^*,
\\
(\eta^*)'
&=
\nabla^{\mathbb{R}^3}_{df^*(\partial_s)}df^*(\partial_t)=
df^*(\nabla_{\partial_s}\partial_t)
+
\sigma^*(\partial_s,\partial_t)N^*
=
N^*,
\\
(N^*)'
&=
\nabla^{\mathbb{R}^3}
_{df^*(\partial_s)}N^*
=
-df^*(A^*\partial_s)
=
-df^*(\partial_s+\partial_t)
=
-(\gamma^*)'-\eta^*.
\end{aligned}
\end{equation}
If we now use the Darboux equations for $\gamma^*$, we get
\begin{equation}\label{eq200}
\begin{aligned}
(\gamma^*)''
&=
\kappa_g\, \eta^*+\kappa^*_n\, N^*,
\\
(\eta^*)'
&=
-\kappa_g\, (\gamma^*)'+\tau^*_g\, N^*,
\\
(N^*)'
&=
-\kappa^*_n\, (\gamma^*)'-\tau^*_g\, \eta^*.
\end{aligned}
\end{equation}
From~\eqref{eq199} and~\eqref{eq200} we have
\[
\kappa_g=0,\qquad
\kappa_n^*=1,\qquad
\tau^*_g=1.
\]
which shows that $\gamma^*$ is a helix
with curvature and torsion both equal to $1$.

Now fix a Jacobi function $u=u(s,t)$ for $f$. Any choice of $(\omega_0,X_0^*)\in \mathbb{R}^3\times \mathbb{R}^3$ produces unique solutions $\omega,X^*\in C^{\infty}(\Sigma,\mathbb{R}^3)$ of~\eqref{eq:CMC-omega-PDE}, \eqref{eq:CMC-X-star-PDE}, respectively.
Expressing $\omega,X^*$ along $f^*(\gamma)$ with respect to $\{(\gamma^*)',\eta^*,N^*\}$ produces
\begin{equation}\label{omegaX*example}
\omega=x(\gamma^*)'+y\eta^*+zN^*,\qquad
X^*=p(\gamma^*)'+q\eta^*+wN^*,
\end{equation}
where $x,y,z,p,q,w$ are smooth functions of $s$.

Let $\partial_{\eta}u(s)=\partial_tu(s,0)$ and $\partial_{\eta}u^*(s)=\partial_tu^*(s,0)$; thus $w(s)=u^*(s,0)$. We next identify the coefficients in the last displayed equation.
Using $Y=\partial_s$ in~\eqref{eq:CMC-omega-form} and~\eqref{eq:CMC-omega-PDE} we get $d\omega (\partial _s)=u\, df^*((I-J)A\partial _s)-du(\partial_s+J\partial_s)\, N^*$.
Since for the immersion $f$ we have $A\partial_s=0$, then
$d\omega (\partial _s)=-du(\partial_s+\partial_t) N^*=
-(\partial_su+\partial_tu)N^*$. Note that this equality holds along the
whole $\Sigma$ (i.e., for any values of $(s,t)$). Evaluating along $\gamma=\{ t=0\}$, we have
\begin{equation}\label{eq202}
\omega'=-\bigl( (u\circ \gamma)'+\partial_{\eta}u\bigr) N^*.
\end{equation}
Taking derivatives in the first equation of~\eqref{omegaX*example} with respect to $s$ we deduce
\begin{eqnarray}
\omega'&=&
x'(\gamma^*)'+x(\gamma^*)''+y'\eta^*+y(\eta^*)'+z'N^*+z(N^*)'  \nonumber
\\
&\stackrel{\eqref{eq199}}{=}&
(x'-z)(\gamma^*)'+(y'-z)\eta^*+(x+y+z')N^*.  \label{eq203}
\end{eqnarray}
Comparison of coefficients in~\eqref{eq202} and \eqref{eq203} gives
\begin{equation}\label{eq204}
x'=z,\qquad y'=z,\qquad -(u\circ \gamma)'-\partial_{\eta}u=x+y+z'.
\end{equation}
Analogously, using $Y=\partial_s$ in~\eqref{eq:CMC-X-star-form} and~\eqref{eq:CMC-X-star-PDE} we get $dX^*(\partial _s)=-\omega \times df^*(\partial_s)-u\, df^*(A\partial_s)+du(J\partial_s)N^*$.
As $A\partial_s=0$, then for every $(s,t)$
\[
dX^*(\partial _s)=-\omega \times df^*(\partial _s)+\partial_tu\, N^*.
\]
Evaluating along $\gamma=\{ t=0\}$ and using that the frame
$\{(\gamma^*)',\eta^*,N^*\}$ is positively oriented, we get
\begin{equation}\label{eq205}
(X^*)'=-\omega\times (\gamma^*)'+\partial_{\eta}u\, N^*
=
-z\, \eta^*+(y+\partial_{\eta}u) N^*.
\end{equation}
Taking derivatives in the second equation of~\eqref{omegaX*example} with respect to $s$,
\begin{eqnarray}
(X^*)'&=&
p'(\gamma^*)'+p(\gamma^*)''+q'\eta^*+q(\eta^*)'+w'N^*+w(N^*)'  \nonumber
\\
&\stackrel{\eqref{eq199}}{=}&
(p'-w)(\gamma^*)'+(q'-w)\eta^*+(p+q+w')N^*.  \label{eq206}
\end{eqnarray}
Comparing~\eqref{eq205} and \eqref{eq206}, we have
\begin{equation}\label{eq207}
p'=w,\qquad -z=q'-w,\qquad y+\partial_{\eta}u=p+q+w'.
\end{equation}

Observe that the six functions $x,y,z,p,q,w$ are arbitrary, so every Euclidean Killing gauge is covered, and the ODE systems~\eqref{eq204}, \eqref{eq207} must be satisfied by any Euclidean Killing gauge.

Since $w(s)=u^*(s,0)$, we can write the last equation of~\eqref{eq207} as
$y+\partial_{\eta}u=p+q+(u^*\circ \gamma)'$. Differentiating this last equation with respect to $s$ we obtain
\[
y'+(\partial_{\eta}u)'=p'+q'+(u^*\circ \gamma)''
\stackrel{\eqref{eq207}}{=}
2(u^*\circ \gamma)-z+(u^*\circ \gamma)''.
\]
Recall that~\eqref{eq196} expresses $X^*,\omega$ in tangent and normal components to $f^*$. Comparing this with~\eqref{omegaX*example} we get
the following equalities for all $(s,t)$:
\begin{equation}\label{eq208}
W=x\,\partial_s+y\,\partial_t,
\qquad
Z=p\,\partial_s+q\,\partial_t.
\end{equation}
Taking $Y=\partial_t$ in~\eqref{eq197}, we obtain
\begin{eqnarray}
\partial_\eta u^*
&=&
du^*(\partial_t)
=
g(W,J\partial_t)+du(J\partial_t)
-g(A^*\partial_t,Z)  \nonumber
\\
&=&
-g(W,\partial_s)-du(\partial_s)
-g(A^*\partial_t,Z)  \nonumber
\\
&\stackrel{\eqref{eq198},\eqref{eq208}}{=}&
-x-(u\circ \gamma)'-(p+q).  \label{eq209}
\end{eqnarray}
Differentiating \eqref{eq209} along $\gamma$ gives
\begin{equation}\label{eq:CMC-cylinder-boundary-law}
(\partial_\eta u^*)'
=
-x'-(u\circ \gamma)''-p'-q'
\stackrel{\eqref{eq204},\eqref{eq207}}{=}
-(u\circ \gamma)''-2(u^*\circ \gamma),
\end{equation}
or equivalently, $(\partial_s\partial_tu^*)(s,0)+2u^*(s,0)=-(\partial_s^2u)(s,0)$ for all $s$.

So far we have used an arbitrary Jacobi function on our cylindrical example. We now make two particular choices to prove that both pure Dirichlet--Neumann and Neumann--Dirichlet exchange fail.
\begin{enumerate}
\item Consider the function $u(s,t):=e^s\sin(\sqrt5\,t)$, which is Jacobi since $\mathcal L_f=\partial_s^2+\partial_t^2+4$. Observe that $u\circ \gamma=0$ and $\partial_{\eta}u(s)=u_t(s,0)=\sqrt5\,e^s$ along $\gamma$. If a gauge gave $\partial_{\eta}u^*=0$ along $\gamma$, then the left-hand side of~\eqref{eq:CMC-cylinder-boundary-law} would vanish identically; since the
first term in the right-hand side also vanishes, we would deduce that 
$u^*\circ \gamma=0$, i.e. $w=0$. The third equation in~\eqref{eq207} then gives $y+\partial_{\eta}u=p+q$, while~\eqref{eq209} yields
$0=\partial_{\eta}u^*=-x-(u\circ \gamma)'-(p+q)=-x-(p+q)$, i.e., $p+q=-x$.
Therefore, $y+\partial_{\eta}u=-x$. Plugging this equality in the third equation of~\eqref{eq204} gives $z'=0$. On the other hand, taking derivatives in $y+\partial_{\eta}u=-x$ we get 
\[
z+(\partial_{\eta}u)'\stackrel{\eqref{eq204}}{=}
y'+(\partial_{\eta}u)'=-x'\stackrel{\eqref{eq204}}{=}-z.
\]
Taking one more derivative, $(\partial_{\eta}u)''=-2z'=0$. This is a contradiction, as $(\partial_{\eta}u)''=\sqrt5\,e^s$. Consequently, no choice of Euclidean Killing gauge can make $\partial_{\eta}u^*$ vanish identically along $\gamma$.

\item The function $u(s,t)=e^s\cos(\sqrt5\,t)$ is also Jacobi, with $\partial_{\eta}u(s)=u_t(s,0)=0$ and $(u\circ \gamma)(s)=e^s$. If a gauge gave $u^*\circ \gamma =0$, hence $w=0$ and also $w'=0$. Since
$\partial_{\eta}u=0$, the third equation in~\eqref{eq207} gives $y=p+q$.
Differentiating this identity with respect to $s$, we obtain $y'=p'+q'$.
Now, 
\[
z\stackrel{\eqref{eq204}}{=}y'=p'+q'\stackrel{\eqref{eq207}}{=}w+(w-z)=-z,
\]
hence $z=0$ along $\gamma$. In particular, $z'=0$. This equality and  $\partial_{\eta}u=0$ reduce the third equation in~\eqref{eq204} to $-(u\circ\gamma)'=x+y$,
and differentiating once more with respect to $s$ gives $x'+y'+(u\circ\gamma)''=0$.
The first two equations in~\eqref{eq204}, together with $z=0$, give $x'=y'=z=0$.
Consequently, $(u\circ\gamma)''=0$. This is impossible, since $(u\circ\gamma)(s)=e^s$ and thus, $(u\circ\gamma)''(s)=e^s$. This contradiction shows that no choice of Euclidean Killing gauge can make $u^*\circ\gamma$ vanish identically.
\end{enumerate}

Thus both pure Dirichlet--Neumann and Neumann--Dirichlet exchange fail in general for CMC~$1$ conjugation in $\mathbb{R}^3$ even modulo Euclidean Jacobi--Killing functions. For this cylindrical example, Dirichlet data for a Jacobi function $u$ do imply the modified condition $\partial_s\partial_tu^*+2u^*=0$ by~\eqref{eq:CMC-cylinder-boundary-law}, which is not an ordinary Robin condition.
}
\end{example}

\section*{Acknowledgements}

Jos\'e M. Espinar is partially supported by MINECO/MICINN/FEDER grant no PID2024-\allowbreak160586NB-I00 and by the ``Mar\'\i a de Maeztu'' Excellence Unit IMAG, reference CEX2020-001105-M, funded by \allowbreak MCINN/AEI/10.13039/501100011033/CEX2020-001105-M.
\par\noindent
Joaqu\'in P\'erez is partially supported by MINECO/MICINN/FEDER grant
PID2020-117868GB-I00, regional grant P18-FR-4049, and the ``Mar\'ia de
Maeztu'' Excellence Unit IMAG, reference CEX2020-001105-M, funded by
MCINN/AEI/10.13039/501100011033.

\vspace{.3cm}
\par\noindent
{\sc Jos\'e M. Espinar:} jespinar@ugr.es\\
{\sc Joaqu\'\i n P\'{e}rez:} jperez@ugr.es									
\begin{center}
\sc Department of Geometry and Topology and Institute of Mathematics (IMAG)\\
University of Granada, 18071, Granada, Spain
\end{center}

\end{document}